\newcommand{\eq}[1]{ 
\begin{equation}
	\begin{split}
		#1 
	\end{split}
\end{equation}
} 
\newcommand{\eqn}[1]{ 
\begin{equation}
	\begin{split}
		#1 \nonumber 
	\end{split}
\end{equation}
} 
\DeclareMathOperator{\im}{Im}
\DeclareMathOperator{\cor}{corank}
\newcommand{\ve}{\varepsilon}
\newcommand{\oX}{\overline{X}} 
\newcommand{\oY}{\overline{Y}} 
\newcommand{\oz}{\bar{z}}
\newcommand{\R}{\mathbb{R}} 
\newcommand{\sfrac}[2]{\text{\small{$\frac{#1}{#2}$}}} 
\newcommand{\parc}[1]{\dfrac{\partial}{\partial #1}} 
\newcommand{\parcs}[2]{\dfrac{\partial #1}{\partial #2}} 
\newcommand{\sv}{S_V} 
\newcommand{\svm}{S_{V,min}} 
\newcommand{\E}[0]{\mathcal{E}}
\newcommand{\B}[0]{\mathcal{B}} 
\newcommand{\C}[0]{\mathcal{C}} 
\newcommand{\Ci}[0]{\mathcal{C}^\infty}
\newtheorem{example}{Example}[section]
\newtheorem{remark}{Remark}[section] 
\newtheorem{theorem}{Theorem}[section] 
\newtheorem{lemma}{Lemma}[section] 
\newtheorem{proposition}{Proposition}[section] 
\newtheorem{corollary}{Corollary}[section]
\newtheorem{definition}{Definition}[section]
\numberwithin{equation}{section}
\begin{document}
 
\begin{frontmatter}
	\title{Polynomial normal forms of constrained differential equations with three parameters.}
	\author[jbi]{H. Jard\'on-Kojakhmetov\corref{cor1}}
	\ead{h.jardon.kojakhmetov@rug.nl}
	\author[jbi]{Henk W. Broer}
	\ead{h.w.broer@rug.nl}
	\address[jbi]{Johann Bernoulli Institute for Mathematics and Computer Science University of Groningen, P.O. Box 407, 9700 AK, Groningen, The Netherlands}
	\journal{Journal of Differential Equations.}
	\cortext[cor1]{Corresponding author.}
	\begin{abstract}
	    We study generic constrained differential equations (CDEs) with three parameters, thereby extending Takens's classification of singularities of such equations. In this approach, the singularities analyzed are the Swallowtail, the Hyperbolic, and the Elliptic Umbilics. We provide polynomial local normal forms of CDEs under topological equivalence. Generic CDEs are important in the study of slow-fast (SF) systems. Many properties and the characteristic behavior of the solutions of SF systems can be inferred from the corresponding CDE. Therefore, the results of this paper show a first approximation of the flow of generic SF systems with three slow variables.
	\end{abstract}
	\begin{keyword}
		Constrained Differential Equations, Slow-Fast systems, Normal Forms, Catastrophe Theory.
	\end{keyword}
\end{frontmatter}

\tableofcontents 

\section{Introduction}
	\renewcommand*{\a}{\alpha}
    The present document studies \emph{constrained differential equations} (CDEs) with three parameters. The main motivation comes from \emph{slow-fast} systems, which are usually given as
    \eq{\label{eq:intro_slow}
    \ve\dot x &= f(x,\a,\ve)\\
    \dot    \a &= g(x,\a,\ve),
    }
    where $x\in\R^n$ represents states of a process, $\a\in\R^m$ denotes control parameters, and $\ve>0$ is a small constant. Mathematical equations as \eqref{eq:intro_slow} are often used to model phenomena with two time scales. A constrained differential equation is the limit $\ve=0$ of \eqref{eq:intro_slow}, that is
    \eq{\label{eq:intro_cde}
    0 &= f(x,\a,0)\\
    \dot    \a &= g(x,\a,0).
    }
    We assume throughout the rest of the text that the functions $f(\cdot)$ and $g(\cdot)$ are $\Ci$ smooth (all partial derivatives exist and are continuous). From \eqref{eq:intro_slow} one can observe that whenever $f(\cdot)\neq 0$, the smaller $\ve$ is, the faster $x$ evolves with respect to $\a$. Therefore, in the context of SF systems, the coordinates $x$ and $\a$ receive the name of \emph{fast} and \emph{slow} respectively. Defining the new time parameter $\tau=t/\ve$, the system \eqref{eq:intro_slow} can be rewritten as
    \eq{\label{eq:intro_fast}
    x' &= f(x,\a,\ve)\\
    \a' &= \ve g(x,\a,\ve),
    }
    where $'$ denotes derivative with respect to the fast time $\tau$. Systems \eqref{eq:intro_slow} and \eqref{eq:intro_fast} are equivalent as long as $\ve\neq 0$. In the limit $\ve=0$ the system \eqref{eq:intro_fast} reads
    \eq{\label{eq:intro_layer}
    x' &= f(x,\a,0)\\
    \a' &= 0,
    }
    and it is called \emph{the layer equation}. A first approximation of the slow-fast dynamics of \eqref{eq:intro_slow} (or \eqref{eq:intro_fast}) is given by studying both \eqref{eq:intro_cde} and \eqref{eq:intro_layer}. 
	
	\begin{remark}\label{rrev}\leavevmode
		\begin{itemize}
			\item There are some important features, such as canards, of slow fast systems that can not be studied in the limit $\ve=0$ \cite{Benoit, DumRou2, Szmolyan2001419}. However, having a \emph{generic model} of the constrained equation is important in order to study the complicated phenomena that related SF systems exhibit.
			\item We are interested in the case where the layer equation (or fast dynamics) is given as a gradient system. More specifically, we assume that there exists a smooth $m$-parameter family $V:\R^n\times\R^m\to\R$ such that
    \eq{\label{com}
    f(x,\a,0)=\parcs{V}{x}(x,\a).
    }
	Although not every slow fast system satisfies \eqref{com}, there is a motivation behind this. From the mathematical point of view, it is interesting to see how the classification of singularities of smooth maps can be used to find normal forms. It is precisely the purpose of this document to exploit such idea. Applications are also an important motivation. Two remarkable features of SF systems, canards and relaxation oscillations are found in models where $f(x,\a,0)$ is locally a fold singularity \cite{Krupa1,Krupa2}.  Furthermore, there are interesting real life phenomena which can indeed be modeled by systems satisfying \eqref{com}. Two examples are shown in section \ref{sec:motivation} and some more can be consulted in \cite{gucwa2009geometric,KosiukS11,milik2001multiple,milik1998geometry}.
		\end{itemize}
		
	\end{remark}
	
    The family $V$ is called \emph{potential function}. By such consideration, we define the constraint manifold $\sv$ as the critical set of $V$, this is
    \eq{\label{eq:intro_sv}
    \sv=\left\{ (x,\a)\in\R^n\times\R^m \, | \, \parcs{V}{x}(x,\a)=0\right\}.
    }
     Observe that the set $\sv$ serves as the \emph{phase space} of the CDE \eqref{eq:intro_cde}, and as the set of equilibrium points of the layer equation \eqref{eq:intro_layer}. We can roughly interpret the dynamics of a CDE as follows. Let a potential function $V$ be given. If the initial condition $(x_0,\a_0)\notin\sv$, $x$ has to adjust infinitely fast (according to \eqref{eq:intro_layer}) to satisfy the constraint $\sv$. This infinitely fast behavior occurs along the so called \emph{fast foliation}, which is a family of $n$-dimensional hyperplanes parallel to the $(x,0)$ space. Once the constraint is satisfied, the dynamics follow \eqref{eq:intro_cde}. Naturally, $\sv$ does not need to be a regular manifold. It may very well happen that the potential function $V$ has degenerate critical points. In fact, it is in such situation where the most interesting phenomena appear. Two classical examples are given in sections \ref{sub:zeeman_s_heart_beat_model} and \ref{sub:zeeman_s_nerve_impulse_model}. For an illustration of the previous description see figure \ref{fig:intro_schematic}.\\
    
    \begin{figure}[htbp]\centering
        \includegraphics[]{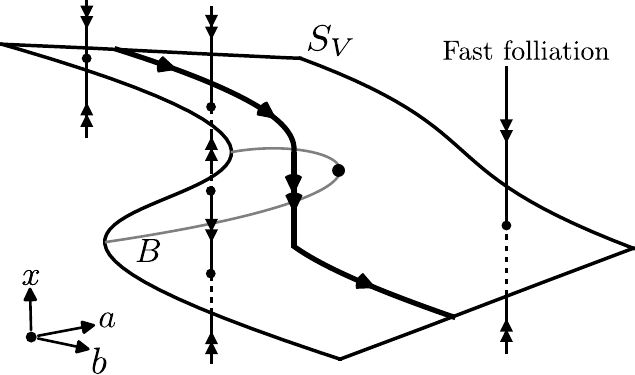}
        \caption{Schematic representation of the solutions of a constrained differential equation with one state variable \ensuremath{\left(x\right)} and two control parameters \ensuremath{\left(a,b\right)}. If the initial conditions do not lie within the critical set \ensuremath{\sv}, then there is an infinitely fast transition towards \ensuremath{\sv} according to the layer equation \protect\eqref{eq:intro_layer}. Once the constraint \ensuremath{\sv} is satisfied, the dynamics are governed by the CDE \protect\eqref{eq:intro_cde}. The phase space is then the manifold \ensuremath{\sv}. Such manifold may have singularities, which consist of points in \ensuremath{\sv} tangent to the fast foliation. The set of such tangent points is denoted by \ensuremath{B}. At such points, the trajectories may jump to another stable part of \ensuremath{\sv} or they may indefinitely follow the fast foliation.}
        \label{fig:intro_schematic}
    \end{figure} 
    
    In the context of CDEs, one is interested on the description of the \emph{local} behavior of \eqref{eq:intro_cde} in an arbitrarily small neighborhood of a singularity of the potential $V$. We assume that such singularity is located at the origin. Formally speaking, we consider germs \cite{Arnold_singularities, Brocker} of functions $V$ at the origin. Therefore, in the rest of the paper whenever we write a function $V:\R^n\times\R^m\to \R$ we actually mean that $V$ is the preferred representative of the germ of $V$ at the origin. Given such potential, then one studies the types of vector fields that are likely to occur. 
    
    \begin{remark} As we detail below, a normal form of a CDE is given by a generic\footnote{The term generic stands for maps satisfying Thom's transversality theorem. See theorem \ref{teo:trans} in section \ref{sec:cde}.} local potential function $V$, and by a member of an equivalence class of vector fields (see section \ref{sec:cde}). That is, an important element on the analysis of singularities of CDEs is the classification of families $V:\R^n\times\R^m\to\R$. For sufficiently small number of parameters, such classification problem is known as elementary catastrophe theory (see section \ref{sec:cat}).
    \end{remark}
    
   Constrained equations \eqref{eq:intro_cde} are a first approximation of the slow dynamics of a slow-fast system \eqref{eq:intro_slow}.  Therefore, normal forms of CDE play an important role in understanding the overall dynamics of the corresponding SF system. The latter type of equation with one (Fold) and two (Cusp) slow variables have been studied in \cite{DumRou1, Krupa1, Krupa2, vanGils} and in \cite{BKK} respectively. The main contribution of this paper consists on a list of normal forms of CDEs with three parameters (see theorem \ref{teo:main}). This means that up to an $\ve=0$ approximation, we also provide a description of generic slow-fast systems with three slow variables. Moreover, the methodology and ideas presented in the main part of this article can be used to provide topological normal forms of CDEs with ``more complicated singularities'', which in our context amounts to more degenerate potential $V$ or more, also degenerate, fast variables. An example would be the topological classification of CDEs with four parameters.\\
	
    The present document is arranged as follows. In section \ref{sec:cat} we briefly recall the basic concepts of elementary catastrophe theory. After this, in section \ref{sec:motivation} we present a couple of classical examples of slow-fast systems used to roughly model real life phenomena. Next, in section \ref{sec:cde} we review  the formal definitions, and the main results of CDE theory \cite{Takens1}. Afterwards, in section \ref{sec:nf} we present a geometric analysis of constrained differential equations with three parameters focusing on the catastrophes defining the generic potential functions and their influence in the type of vector fields that one may generically encounter. Once we provide sufficient geometric insight of the problem, we present our results in sections \ref{sec:main_theorem} and \ref{sec:jumps} followed by the corresponding proofs. For completeness, in the appendix we include some background theory to which we refer in the main text.

\section{Elementary catastrophe Theory}\label{sec:cat}
    \textit{Catastrophe theory} has its origins in the 1960's with the work of Ren\'e Thom \cite{Thom2,Thom1,Thom3}. One of its goals was to qualitatively study the sudden (or catastrophic) way in which solutions of biological systems change upon a small variation of parameters. The most basic setting of this theory is called \textit{elementary} catastrophe theory \cite{Golubitsky,Poston,Stewart1}. It is concerned with gradient dynamical systems
    \eq{
    \dot x=-\parc{x} V(x,\a).
    \label{eq:grad1}
    }
    The variables $x\in\R^n$ represent the \textit{states} or the measurable quantities of a certain process, and $\a\in\R^m$ represent \textit{control parameters}. One concern is to find equilibria of \eqref{eq:grad1}, this is, to solve
    \eq{
    \parc{x} V(x,\a)=0.
    \label{eq:grad2}
    }
    In mathematical terminology, one is interested in the qualitative behavior of the solutions $x$ of \eqref{eq:grad2} as the parameters $\a$ change. It is also interesting to know to what extent different functions $V$ may show the same topology (or the same local behavior). These ideas led to the topological classification of families of degenerate functions $V(x,\a):\R^n \times \R^m\to \R$ for $m\leq 4$, which is known as the ``seven elementary catastrophes", see table \ref{cats}.

    \begin{theorem}[Thom's classification theorem \cite{Brocker}]  Let $V(x,\a):\R^n \times \R^m\to \R$ be an $m-$parameter family of smooth functions $V(x,0):\R^n \to \R$, with $m\leq 4$. If $V(x,\a)$ is generic then it is right-equivalent (up to multiplication by $\pm 1$, up to addition of Morse functions and up to addition of functions on the parameters) to one of the forms shown in table \ref{cats}.

    \begin{table}[htbp]\small
    \begin{center}
    \begin{tabular}{l|l|c}
    Name & $V(x,\a)$ & Codimension\\[1ex]
    \hline
    Non-critical & $x$  &   \\[1ex]
    Non-degenerate (Morse) & $x^2$ & $0$ \\[1ex]
    \hline
    Fold & $\tfrac{1}{3}x^3+ax$ & $1$\\[1ex]
    Cusp & $\tfrac{1}{4}x^4+\tfrac{1}{2}ax^2+bx$ & $2$\\[1ex]
    Swallowtail & $\tfrac{1}{5}x^5+\tfrac{1}{3}ax^3+\tfrac{1}{2}bx^2+cx$ & $3$ \\[1ex]
    Elliptic Umbilic & $x^3-3xy^2+a(x^2+y^2)+bx+cy$ & $3$ \\[1ex]
    Hyperbollic Umbilic & $x^3+y^3+axy+bx+cy$ & $3$  \\[1ex]
    Butterfly & $\tfrac{1}{6}x^6+\tfrac{1}{4}ax^4+\tfrac{1}{3}bx^3+\tfrac{1}{2}cx^2+dx$ & $4$ \\[1ex]
    Parabolic Umbilic & $x^2y+y^4+ax^2+by^2+cx+dy$ & $4$ \\
    \end{tabular} 
    \end{center}
    \caption{Thom's classification of families of functions for $m\leq 4$. Each elementary catastrophe is a structurally stable $m$-parameter unfolding of the germ $V(x,0)$.}
    \label{cats} 
    \end{table}
	
    \end{theorem}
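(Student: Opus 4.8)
The plan is to decouple the state dependence from the parameter dependence: first classify the central germ $V(\cdot,0):(\R^n,0)\to(\R,0)$ up to right-equivalence, and then recover the full $m$-parameter family as a \emph{versal unfolding} of that germ. Three classical tools from singularity theory carry this out: the splitting lemma, Mather's finite determinacy theorem, and the Thom--Mather versal unfolding theorem. The genericity hypothesis enters precisely to guarantee that $V$ is a versal unfolding of its central germ, so that the unfolding monomials (the terms $ax$, $\tfrac12 bx^2$, etc.\ in table \ref{cats}) occur with exactly the count prescribed by the codimension.

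First I would apply the splitting lemma (the parametrized Morse lemma) to $V(x,0)$. Writing $r$ for the rank and $c=\cor D_x^2 V(0,0)=n-r$ for the corank of the Hessian at the origin, the lemma yields a right-equivalence
\begin{equation}
V(x,0)\ \sim\ \pm x_1^2\pm\cdots\pm x_r^2+g(x_{r+1},\dots,x_n),
\end{equation}
where the residual germ $g$ has vanishing $2$-jet, i.e.\ $g\in\mathfrak{m}^3$. The nondegenerate quadratic summand is the structurally stable part and may be absorbed (this is the ``addition of Morse functions'' clause of the statement); when $c=0$ one is left with a Morse germ, and the non-critical case $DV(0,0)\neq0$ reduces to $V\sim x_1$ by the submersion theorem. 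All genuine degeneracy therefore sits in $g$, whose number of essential variables is the corank $c$.

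The second step bounds the corank and classifies $g$. Using the Jacobian ideal $\Delta(g)=\langle\partial g/\partial x_{r+1},\dots,\partial g/\partial x_n\rangle$ and the identification of the codimension with $\dim_\R \mathfrak{m}/\Delta(g)$, one checks that corank $c\geq 3$ forces codimension well above $4$, so for codimension $\le4$ necessarily $c\le 2$. For $c=1$ a finite determinacy argument makes $g$ right-equivalent to $\pm x^{k+1}$, the cuspoid $A_k$; the constraint on the codimension leaves $k+1\in\{3,4,5,6\}$, giving the Fold, Cusp, Swallowtail and Butterfly. For $c=2$ one obtains the umbilics: the codimension-$3$ germs $x^3-3xy^2$ and $x^3+y^3$ (the elliptic and hyperbolic umbilics $D_4^\mp$) and the codimension-$4$ germ $x^2y+y^4$ (the parabolic umbilic $D_5$). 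Mather's finite determinacy criterion (an inclusion of the form $\mathfrak{m}^{k+1}\subseteq \mathfrak{m}\,\Delta(g)$) is what permits each germ to be replaced by an actual polynomial of finite degree, so that the normal forms are polynomial as claimed.

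Finally I would invoke the versal unfolding theorem to pass from the central germ to the full family. The codimension $\dim_\R\mathfrak{m}/\Delta(g)$ equals the minimal number of unfolding parameters, and a miniversal unfolding is built by adjoining, against new parameters, a basis of monomials representing $\mathfrak{m}/\Delta(g)$; these are exactly the lower-order terms listed in table \ref{cats}. The genericity assumption, through Thom's transversality theorem (theorem \ref{teo:trans}), ensures that the prescribed family $V$ meets the relevant singularity orbit transversally and is hence equivalent to this miniversal unfolding, while uniqueness of versal unfoldings yields uniqueness up to the stated equivalences (multiplication by $\pm1$, addition of a Morse form, addition of functions of the parameters alone). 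I expect the main obstacle to be the middle step: the algebraic classification of the corank-$1$ and corank-$2$ germs and, above all, the proof of \emph{completeness}—that no further orbits of codimension $\le4$ exist. This completeness is the combinatorial and computational core of the Thom--Mather theory and rests on careful bookkeeping of the ideals $\Delta(g)$ together with the associated finite determinacy degrees.
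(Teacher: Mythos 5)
The paper offers no proof of this theorem: it is imported from the literature (Br\"ocker \cite{Brocker}) as a known classification result, so there is no in-paper argument to compare yours against. Your outline is the standard Thom--Mather proof found in the cited reference --- splitting lemma to reduce to the corank variables, the bound corank $\le 2$ for codimension $\le 4$, finite determinacy to obtain polynomial representatives of the residual germs $A_k$ and $D_4^{\pm}$, $D_5$, and the versal unfolding theorem together with transversality to recover the full family and match genericity with versality --- and each step is correctly identified, including the correspondence between the unfolding monomials and a basis of the quotient by the Jacobian ideal. The only caveat is the one you flag yourself: the completeness of the list (that no further orbits of codimension at most $4$ exist) is the substantive computational core of the theorem, and your proposal locates it without carrying it out, so what you have is a faithful outline of the cited proof rather than a self-contained argument.
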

    \begin{remark}  Loosely speaking, the codimension of a singularity is the minimal number of parameters $m$ for which a singularity persistently occurs in an $m-$parameter family of functions. In this paper we focus on constrained differential equations \eqref{eq:intro_cde} written as
    	\eq{
    		0      &= -\parcs{V}{x}(x,\a) \\
    		\dot \a &= g(x,\a),
    		\label{eq:cde2}
    	}
    where $\a\in\R^3$, and therefore $V(x,\a)$ is any of the codimension $3$ catastrophes of table \ref{cats}. For each of such items, we provide polynomial local normal forms (modulo topological equivalence) of the vector field $g(x,\a)\parc{a}$.

    \end{remark}

\section{Motivating examples}\label{sec:motivation}
    In this section we review two classical examples of natural phenomena that can be qualitatively understood by means of elementary catastrophe theory, and that are modeled by slow-fast systems. These applications were thoroughly studied by Zeeman \cite{Zeeman1}. His interest for using this theory was that it enables a qualitative description of the local dynamics of a biological system instead of modeling the complicated biochemical processes involved. These examples also serve to understand the way the CDEs and SF systems relate to each other.

    \subsection{Zeeman's heartbeat model} 
    \label{sub:zeeman_s_heart_beat_model}

    The simplified heart is considered to have two (measurable) states. The \textit{diastole} which corresponds to a relaxed state of the heart's muscle fiber, and \textit{systole} which stands for the contracted state. When a heart stops beating it does so in relaxed state, an equilibrium state. There is an electrochemical wave that makes the heart contract into systole. When such wave reaches a certain threshold, it triggers a sudden contraction of the heart fibers: a catastrophe occurs. After this, the heart remains in systole for a certain amount of time (larger in comparison to the contraction-relaxation time) and then rapidly returns to diastole. A mathematical local representation of the behavior just explained is given by 
    \eq{
    \ve\dot x&=-(x^3-x+b)\\
    \dot b&=x-x_0,
    \label{eq:heart}
    }   
    where $x, \; b \in \R$. Observe the similarity of \eqref{eq:heart} with a Van der Pol oscillator with small damping \cite{vanderpol_heart}. The variable $x$ models the length of the muscle fiber, $b$ corresponds to an electrochemical control variable and $x_0>\frac{1}{\sqrt{3}}$ represents the threshold. In the limit $\ve=0$  we obtain the CDE
    \eq{
    0&=-(x^3-x+b)\\
    \dot b&=x-x_0.
    \label{eq:heart_cons}
    } 
    The potential function $V$ is a section of the cusp catastrophe, see table \ref{cats} and note that $a=-1$. The constraint manifold is defined by $S_V=\left\{ (x,b)\in\R\times\R \; | \; x^3-x+b=0 \right\}$. Observe that there are two fold points defining the singularity set.
    \eq{
    B=\left\{ (b,x)\in\R^2 \, | \, 3x^2-1=0 \right\},
    }

    this is
    \eq{
    B=\left( \frac{2}{3\sqrt{3}}, \frac{1}{\sqrt{3}}\right)\bigcup\left( -\frac{2}{3\sqrt{3}}, -\frac{1}{\sqrt{3}} \right).
    }

     The set $B$ corresponds singularities of $S_V$, where the fast foliation is tangent to the curve $S_V$. At such points, the trajectory has a sudden change of behavior, it jumps.  A schematic of the dynamics of \eqref{eq:heart_cons} is shown in figure \ref{fig:heart}.\\
    \begin{figure}[hbtp]
    \centering
    \includegraphics[scale=0.5]{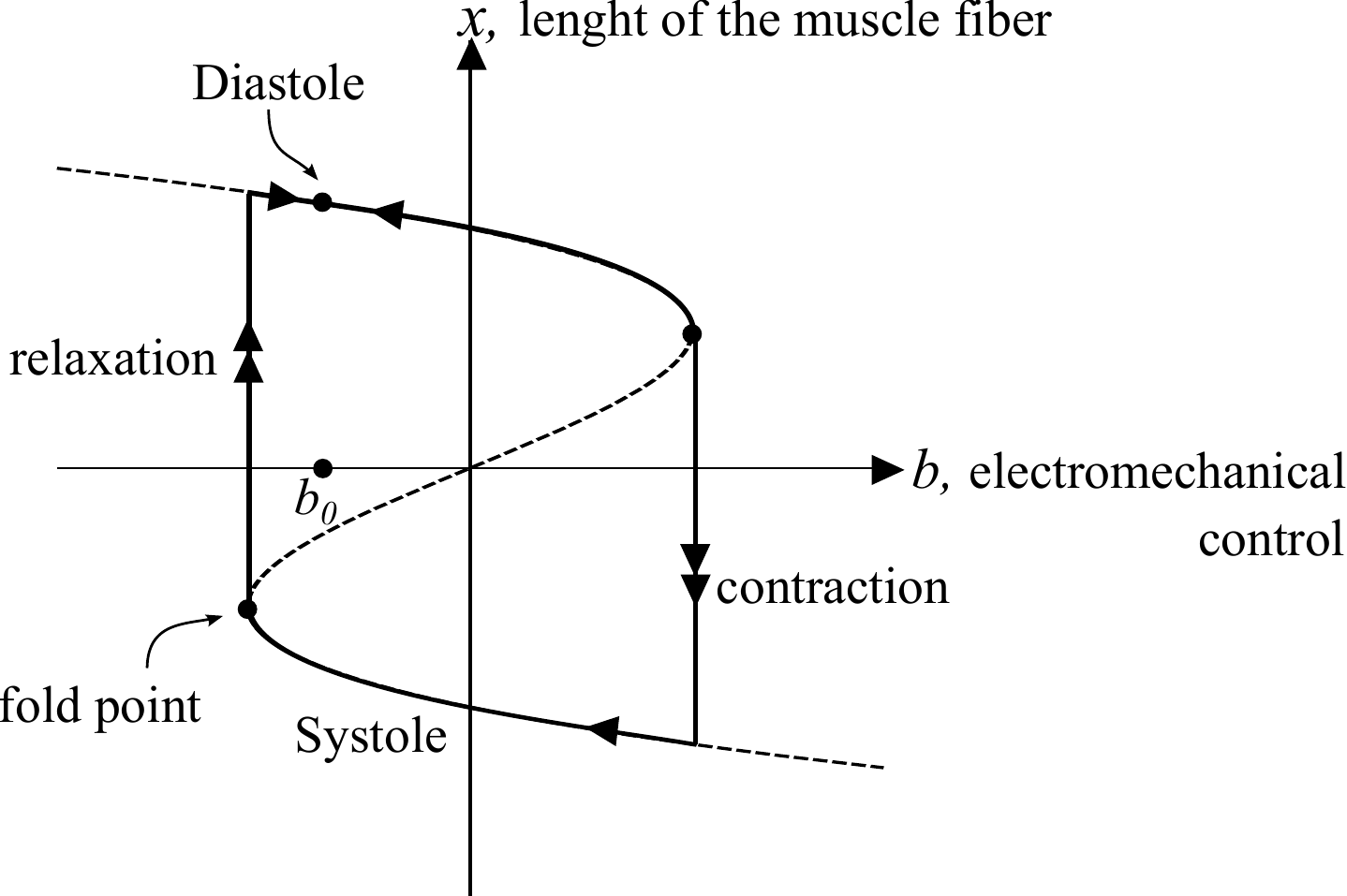}
    \caption{Dynamics of the simplified heartbeat model \protect\eqref{eq:heart_cons}. A pacemaker controls the value of  the parameter \ensuremath{b} changing its value from \ensuremath{b_0} up to an adequate threshold such that the action of contraction is triggered. Such contraction (and relaxation) is modeled by a fast transition between the two stable branches of the curve \ensuremath{S_V}. } 
    \label{fig:heart}
    \end{figure}
	
	For sufficiently small $\ve> 0$, the trajectories of \eqref{eq:heart} are close to those of \eqref{eq:heart_cons}. It is one of the goals of the theory of SF systems to make precise the notion of closeness mentioned above, especially in the neighborhood of singular points (see for example \cite{Fenichel, DumRou1}).

    \subsection{Zeeman's nerve impulse model} 
    \label{sub:zeeman_s_nerve_impulse_model}

    This model qualitatively describes the local and simplified behavior of a neuron when transmitting information through its axon, see \cite{Zeeman1} for details and compare also with the Hodgkin-Huxley model \cite{HH}. Qualitatively speaking, there are three important components on this process: the concentration of Sodium (Na) and Potassium (K), and the Voltage potential (V) in the wall of the axon. As information is being transmitted, there is a slow and smooth change of the Voltage and of the concentration of Potassium but a rather sudden change in the concentration of Sodium. Another local characteristic is that the return to the equilibrium state, when there is no transmission, is slow and smooth. The three variables mentioned behave qualitatively as shown in the figure \ref{fig:nerve_signals}. \\

    \begin{figure}[htbp]
    \centering
    	\includegraphics[scale=0.8]{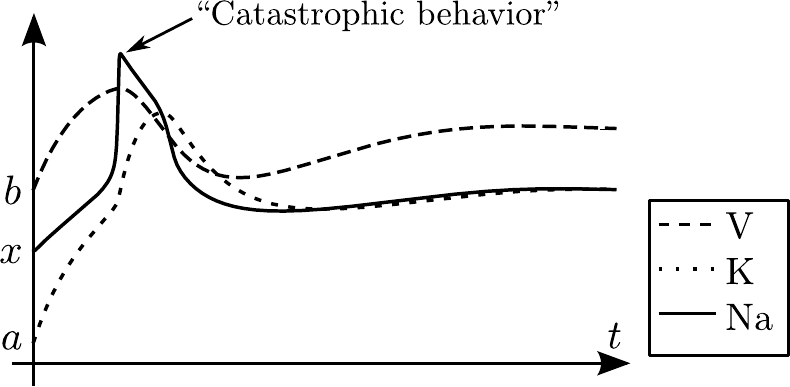}
    	\caption{\cite{Zeeman1} A qualitative picture of the three variables involved in the local model of the nerve impulse. The signal \ensuremath{V} represents the potential of the axon walls. The signals of \ensuremath{Na} and \ensuremath{K} represent the conductance of Sodium and Potassium respectively. Observe that a characteristic property is the sudden and rapid change of the Sodium conductance followed by a smooth and slow return to its equilibrium state. See \protect\cite{Zeeman1}, where a qualitatively similar graph is plotted from measured data. }
    	\label{fig:nerve_signals}
    \end{figure}

    A mathematical model that roughly describes the nerve impulse process is given by
    \eq{ \label{eq:nerve_sf}
    	\ve \dot x &= -(x^3+ax+b)\\
    	\dot a     &= -2(a+x)\\
    	\dot b     &= -1-a.
    }
    The corresponding constrained differential equation reads
    \eq{ \label{eq:nerve_cde}
    	0 &= -(x^3+ax+b)\\
    	\dot a     &= -2(a+x)\\
    	\dot b     &= -1-a.
    }
    The defining potential function is $V=\tfrac{1}{4}x^4+\tfrac{1}{2}ax^2+bx$, that is the cusp catastrophe of table \ref{cats}. The constraint manifold is defined as
	 \eqn{S_V=\left\{(x,a,b)\in\R\times\R^2\; | \;-(x^3+ax+b)=0\right\},
	 } 
	 and is the critical set of $V$. Recall that $\sv$ serves as the phase space of the flow of \eqref{eq:nerve_cde}. The attracting part of the manifold $\sv$, denoted by $\svm$, is given by points where $D^2_xV>0$, this is 
	\eqn{
	\svm=\left\{ (x,a,b)\in\sv\,|\, 3x^2+a>0\right\},
	}
	
    If we restrict the coordinates to $\sv$, we can perform the transformation $(a,b)\mapsto (a,-x^3-ax)$, which allows us to rewrite \eqref{eq:nerve_cde} as the planar system
    \eq{ \label{eq:nerve_des1}
    	\dot a &= -2(a+x)\\
    	\dot x &= \frac{1+a+2(a+x)x}{3x^2+a}.
    }
    The vector field \eqref{eq:nerve_des1} is not smooth. It is not well defined at the singular set \eqn{B=\left\{ (x,a)\in\sv\,|\, 3x^2+a=0\right\}.} However outside $B$,  the flow of \eqref{eq:nerve_des1} is equivalent to the flow of
    \eq{ \label{eq:nerve_des2}
    	\dot a &= -2(3x^2+a)(a+x)\\
    	\dot x &= 1+a+2(a+x)x.
    }
    The vector field \eqref{eq:nerve_des2} receives the name of \emph{the desingularized vector field}. Note that \eqref{eq:nerve_des2}  is smooth and is defined for all $(x,a)\in\R^2$. The importance of \eqref{eq:nerve_des2} lays in the fact that one obtains the solutions of the CDE \eqref{eq:nerve_cde} from the integral curves of \eqref{eq:nerve_des2}. The general reduction process through which we obtain the desingularized vector field is described in section \ref{sec:desingularization}.\\
    
    Observe that \eqref{eq:nerve_des2} has equilibrium points $(a,x)$ as follows.
    \begin{itemize}
    	\item $p_a = (-1,1)$, which is a regular equilibrium point.
    	\item $p_f = \left( -\frac{3}{4}, \frac{1}{2}  \right)$, which is contained in the fold line, thus receives the name folded singularity. 
    \end{itemize}

    Furthermore, $p_f$ is a saddle point, whence it is called folded-saddle singularity. Observe in figure \ref{fig:nerve} the phase portrait of \eqref{eq:nerve_des2} and note the smooth return of some trajectories and compare with the heartbeat model where this effect does not occur. \\

    Once \eqref{eq:nerve_des2} is better understood, we are able to give a qualitative picture of the flow of \eqref{eq:nerve_cde} recalling that to obtain \eqref{eq:nerve_des2} we performed the change of variables $(a,b)\mapsto (a,-x^3-ax)$, and we scaled by the factor $3x^2-a$.  We show in figure \ref{fig:nerve} the phase portraits of desingularized vector field \eqref{eq:nerve_des2} and of the CDE \eqref{eq:nerve_cde}.
    
    \begin{figure}[ht]\centering
        \begin{tikzpicture}[]
            \node [scale=0.75] at (0,0) {\includegraphics{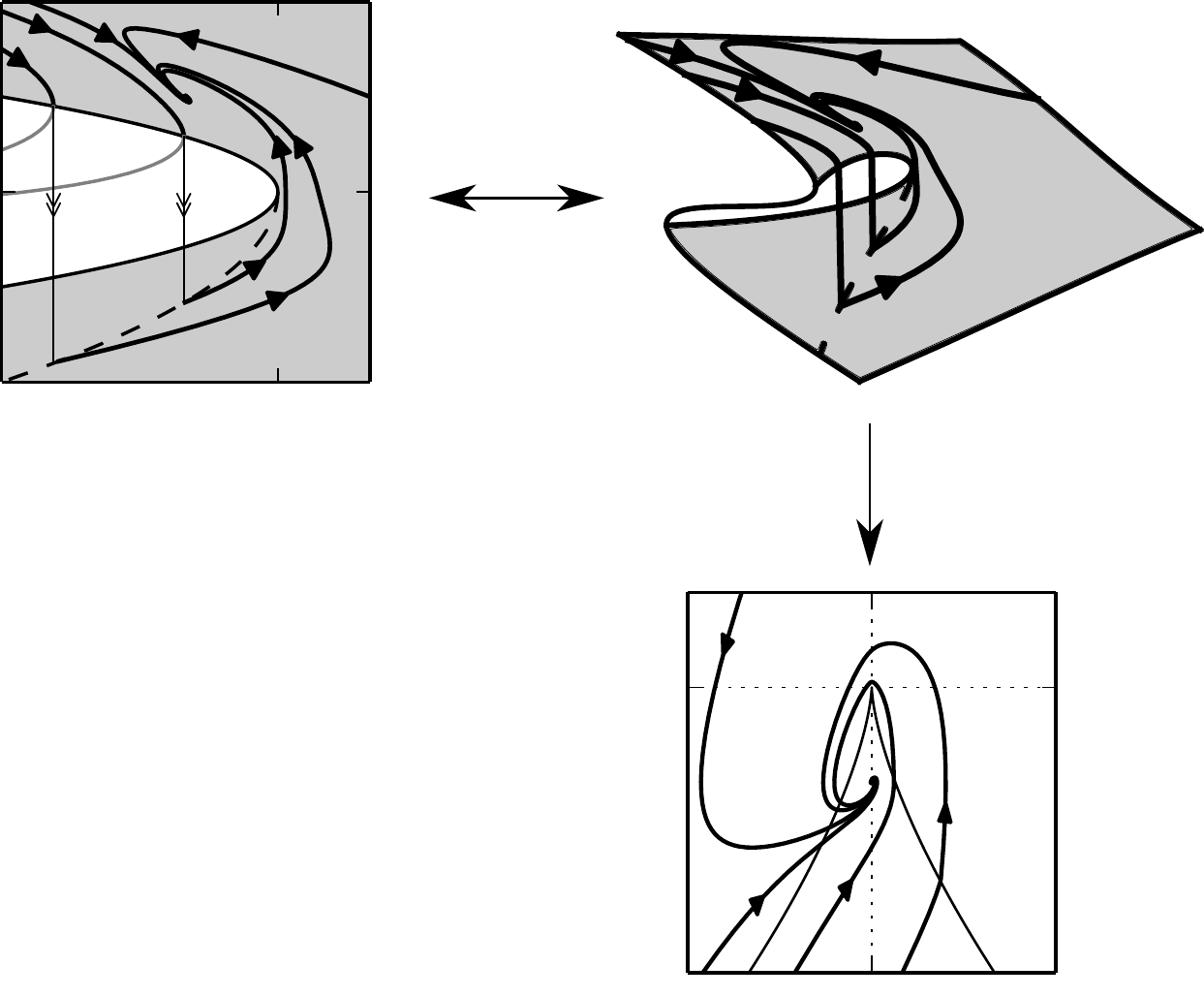}};
        \end{tikzpicture}
        \begin{tikzpicture}[overlay]
            \node at (0,6) {$S_V$};
            \node at (-4.6,6.05) {$B$};
            \node at (-2.6,4) {$\pi$};
            \node at (-5.55,6.6) {$\mathcal D$};
            \node at (-2.7,0) {$b$};
            \node at (-4.6,2.5) {$a$};
            \node at (-10.2,6.4) {$x$};
            \node at (-7.9,4.6) {$a$};
        \end{tikzpicture}
        \begin{tikzpicture}[overlay]
            \draw [decorate,decoration={brace,amplitude=10pt,mirror,raise=2pt},yshift=0pt]
            (-1.4,.2) -- (-1.4,3.4) node [black,midway,xshift=2.1cm] {\small
            $\tilde X:\begin{cases}
        	\dot a     &= -2(a+x)\\
        	\dot b     &= -1-a, \\
			&\text{Satisfying}\\
			0 &= x^3+ax+b
            \end{cases}$};
            \draw [decorate,decoration={brace,amplitude=10pt,mirror,raise=2pt},yshift=0pt]
            (-9.9,4.6) -- (-6.85,4.6) node [black,midway,yshift=-1cm] {\small
            $\overline X:\begin{cases}
        	\dot a &= -2(3x^2+a)(a+x)\\
        	\dot x &= 1+a+2(a+x)x.
            \end{cases}$};
        \end{tikzpicture}
    	\caption{Top right: phase portrait of the CDE \protect\eqref{eq:nerve_cde}. The manifold \ensuremath{S_V} serves as the phase space of the corresponding flow. The shaded region is the attracting part of the constraint manifold, that is \ensuremath{\svm}. Top left: phase portrait of the desingularized vector field \protect\eqref{eq:nerve_des2}. In this picture, and in the rest of the document, the symbol \ensuremath{\mathcal D} denotes the desingularization process, to be detailed in section \protect\ref{sec:desingularization}. Observe that although the vector field \ensuremath{\oX} is defined for all \ensuremath{(x,a)\in\R^2} we are only interested in the region $\svm$. When the trajectories reach the singular set \ensuremath{B}, they jump to another attracting part of \ensuremath{\svm}. Bottom right: projection of the phase portrait of \protect\eqref{eq:nerve_cde} onto the parameter space. The map \ensuremath{\pi} is a smooth projection of the total space \ensuremath{(x,a,b)\in\R^3} onto the parameter space \ensuremath{(a,b)\in\R^2}.   }
    	\label{fig:nerve}
    \end{figure}
    
    \begin{remark} \label{remark_gen} Figure \ref{fig:nerve} graphically shows all the important elements in the theory of constrained differential equations. 
        \begin{itemize}
            \item The constraint manifold $\sv$ is the phase space of the flow of the CDE.
            \item The map $\pi:\R^n\times\R^m\to\R^m$ is a smooth projection from the total space onto the parameter space. The vector field induced in this space is denoted by $\tilde X$.
            \item The smooth vector field $\oX$ is obtained by desingularization, which we denote by $\mathcal D$. In the previous example such process is as follows. First one restricts the coordinates to the constraint manifold, allowing the change of coordinates $b=-3x^2-a$. Then project such restriction onto the parameter space, this is $(x,a,b)|\sv=(x,a,-3x^2-a)\mapsto (a,-3x^2-a)$. By such reparametrization we are able to compute the smooth vector field $\oX$. Observe that for points in $\svm$, the desingularization process $\mathcal D$ can be seen as a map between the solution curves of $\oX$ and those of the CDE \eqref{eq:nerve_cde}. The details of the desingularization procedure is to be given in section \ref{sec:desingularization}.
        \item The solutions of the CDE are obtained from the integral curves of the desingularized vector field $\oX$.
        \end{itemize}
    \end{remark}

\section{Constrained Differential Equations}\label{sec:cde}
\newcommand*\circled[1]{\tikz[baseline=(char.base)]{\node[shape=circle,draw,inner sep=1pt] (char) { #1};}}

    In this section we provide a brief introduction to the theory of constrained differential equations developed by Takens \cite{Takens1}. We also present  some results to be extended in the present paper. Particularly, we discuss the desingularization process, which is an important step in the study of singularities of CDEs. Next we give Takens's list of local normal forms of CDEs with two parameters. 

    \subsection{Definitions}\label{sec:definitions}
    
    \begin{definition}[\sc{Constrained Differential Equation (CDE)}]\label{def:CDE}
    	Let $\E$ and $\B$ be $\C^\infty$-manifolds, and $\pi: \E\to\B$ a $\C^\infty$-projection. \emph{A constrained differential equation on $\E$} is a pair $(V,X)$, where $V: \E\to\R$ is a $\C^\infty$-function, called \emph{potential function}, that has the following properties:
    	\begin{itemize}
    	 	\item[CDE.1 ] $V$ restricted to any fiber of $\E$ (denoted by $V|\pi^{-1}\left( \pi(e)\right)$, $e\in\E$) is proper and bounded from below,
    	 	\item[CDE.2 ] \label{eq:sv} the set $S_V=\left\{ e\in\E : V|\pi^{-1}\left( \pi(e)\right) \text{has a critical point in $e$}\right\}$, called \emph{the constraint manifold}, is locally compact in the sense: for each compact $K\subset\B$, the set $\sv\bigcap\pi^{-1}(K)$ is compact,
    	 \end{itemize} 
    	 and  $X$ is such that:
    	 \begin{itemize}
    	 	\item[CDE.3 ]  $X:\E \to T\B$ is a $\C^\infty$-map covering $\pi:\E\to\B$.
    	 \end{itemize}
    \end{definition}

     \begin{remark} $ $
     	\begin{itemize} 
     		\item $S_V$ is a smooth manifold of the same dimension as $\B$.

     		\item The covering property of $X$ means that for all $e\in\E$, the tangent vector $X(e)$ is an element of $T_{\pi(e)}\B$, the tangent space of $\B$ at the point $\pi(e)$. $T\B$ denotes the tangent bundle of $\B$. The covering property of $X$ defines a vector field $\tilde X:\B\to T\B, \tilde X=X\circ \pi^{-1}$. 

        \end{itemize}
 	
     \end{remark}
     
     \begin{definition}[\sc The set of minima]
     	The set $\svm$ is defined by
     \small\[ S_{V,min}=\left\{ e\in\E :  V|\pi^{-1}\left( \pi(e)\right) \text{has a critical point in $e$, which Hessian is positive semi-definite}\right\} \]
     \end{definition}
     
     Recall that in coordinate notation we are studying equations of the form
 	\eqn{
 		0      &= -\parcs{V}{x}(x,\a) \\
 		\dot \a &= g(x,\a),
 	}
    and therefore $\svm$ corresponds to the attracting region of $\sv$.

     \begin{definition}[\sc Solution] Let $(V,X)$ be as in definition \ref{def:CDE}. A curve $\gamma: J\to \E$, $J$ an open interval of $\R$, is a \emph{solution} of $(V,X)$ if 
     \begin{itemize}
     	\item[S1 ] $\gamma\left( t_0^+\right)$=$\lim_{t \downarrow t_0}\gamma$ and $\gamma\left( t_0^-\right)=\lim_{t \uparrow t_0}\gamma$ exist for all $t_0 \in J$, satisfying
     	\begin{itemize}
     		\item[$\bullet$] $\pi\left( \gamma\left( t_0^+ \right) \right)=\pi\left( \gamma\left( t_0^- \right) \right)$,
     		\item[$\bullet$] $\gamma\left( t_0^+ \right),\gamma\left( t_0^- \right)\in\svm$.
     	\end{itemize}
     	\item[S2 ] For each $t\in J$, $X\left( \gamma\left( t^- \right) \right)$ (resp. $X\left( \gamma\left( t^+ \right) \right)$) is the left (resp. right) derivative of $\pi(\gamma)$ at $t$.
     	\item [S3 ] Whenever $\gamma\left( t^- \right) \neq \gamma\left( t^+ \right), \; t\in J $, there is a curve in $\pi^{-1}\left( \pi\left( \gamma\left( t^+ \right) \right) \right)$ from $\gamma\left( t^- \right)$ to $\gamma\left( t^+ \right) $ along which $V$ is monotonically decreasing.
     \end{itemize}
     \end{definition}

     \begin{remark} \leavevmode
		 \begin{itemize}%
         \item Solutions are also defined for closed or semiclosed intervals. A curve $\gamma:[\alpha,\beta]\to\E$ ($\gamma:(\alpha,\beta]\to\E$, or $\gamma:[\alpha,\beta)\to\E$) is a solution of $(V,X)$ if, for any $\alpha<\alpha'<\beta'<\beta$, the restriction $\gamma|(\alpha',\beta')$ is a solution and if $\gamma$ is continuous at $\alpha$ and $\beta$ (at $\beta$, or at $\alpha$) or if there is a curve from $\gamma\left(\alpha\right)$ to $\gamma\left(\alpha^+ \right)$ and from $\gamma\left(\beta^-\right)$ to $\gamma\left(\beta \right)$ (from $\gamma\left(\beta^-\right)$ to $\gamma\left(\beta \right)$, or from $\gamma\left(\alpha\right)$ to $\gamma\left(\alpha^+ \right)$) as in property S3 above.
		 \item Note then that $\pi(\gamma)$ is continuous.
         \item The property S3 above describes the jumping process. It basically says that if a jump occurs, it happens along some fiber $\pi^{-1}(\pi(e))$. A jump is an infinitely fast transition along a fiber passing through a singular point of $S_V$.
     \end{itemize}
     \end{remark}

     \begin{definition}[\sc Jet space]
     	Let $\pi:\E\to\B$ be a fibre bundle as before. We define $J_V^k(\E,\R)$ as the space of $k-$jets of functions $V:\E\to\R$. Similarly $J_X^k(\E,T\B)$ is defined to be the space of $k-$jets of smooth maps $X:\E\to T\B$ covering $\pi$. Finally $J^k(\E)=J_V^k(\E,\R) \oplus J_X^k(\E,T\B)$ is the space of $k-$jets of constrained equations. For a given $(V,X)$, the smooth map $j^k(V,X):\E\to J^k(\E)$ assigns to each $e\in\E$ the corresponding $k-$jets of $V$ and $X$ at $e$.
     \end{definition}

     \begin{remark}
     	The elements of $J_V^k(\E,\R)$ are equivalence classes of pairs $(V,e)$, $V\in\C^\infty(\E,\R)$, $e\in\E$; where $(V,e) \sim (V',e')$ if $e=e'$ and all partial derivatives of $(V-V')$ up to order $k$ vanish at $e$. The same idea holds for $J_X^k(\E,T\B)$ and thus for $J^k(\E)$. This equivalence relation is independent of the choice of coordinates.
     \end{remark}
     
     \begin{definition}[\sc Singularity] We say that a CDE $(V,X)$ has a \emph{singularity} at $e\in\E$ if 
     \begin{enumerate}
        \item $X(e)=0$, or
     	\item $V|\pi^{-1}\left( \pi(e)\right)$ has a degenerate critical point at $e$.
     \end{enumerate}
     \end{definition}
     
     \begin{definition}[\sc The set $\Sigma^I$] \label{def:tb} Let $I=(i_1,i_2,\ldots,i_k)$ be a sequence of positive integers such that $i_1\geq i_2 \geq \cdots \geq i_k$. The set $\Sigma^I \subset J^\ell(\E)$ ($\ell\geq k$) is the set of CDEs $(V,X)$ for whose restriction $V|\pi^{-1}\left( \pi(e)\right)$ has in $e$ a critical point of Thom Boardman symbol $I$ (see appendix \ref{app:TB} for details).	
     \end{definition}

     The following statements are shown, for example, in \cite{Arnold_singularities}

     \begin{itemize}
     	\item $J^\ell(\E)$ can be stratified since the closure of $\Sigma^I$ is an algebraic subset of  $J^\ell(\E)$,
     	\item $\Sigma^I$ is a  submanifold of $J^\ell(\E)$.
     \end{itemize}

     It is useful now to state Thom's transversality theorem in the context of constrained differential equations.

     \begin{theorem}[\sc Thom's strong transversality theorem]\label{teo:trans} Let $Q\subset J^k(\E)$ be a stratified subset of codimension $p$. Then there is an open and dense subset $\mathcal O_Q\subset\C^\infty(\E,\R)\times\C^\infty(\E,T\B)$ such that for each $(V,X)\in\mathcal O_Q$, $j^k(V,X)$ is transversal to $Q$. Therefore $\left( j^k(V,X)\right)^{-1}(Q)$ is a codimension $p$ stratified subset of $\E$. 
	
     \end{theorem}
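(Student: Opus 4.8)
The plan is to reduce the statement to the classical Thom jet transversality theorem (see \cite{Golubitsky, Arnold_singularities}) for the fibre bundle $\pi:\E\to\B$, and then to conclude by the stratified transversal preimage theorem. Since $Q$ is stratified, I would first write $Q=\bigcup_\alpha Q_\alpha$ as a locally finite union of submanifolds obeying the frontier condition, with the top-dimensional strata of codimension exactly $p$ and all remaining strata of codimension $\geq p$. By definition $j^k(V,X)$ is transversal to $Q$ exactly when it is transversal to every stratum $Q_\alpha$; because the family $\{Q_\alpha\}$ is locally finite (hence countable on the relevant charts), it suffices to produce, for each single submanifold $Q_\alpha$, a residual set of pairs $(V,X)$ whose $k$-jet extension meets $Q_\alpha$ transversally. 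The countable intersection of these residual sets is again residual, hence dense, in $\C^\infty(\E,\R)\times\C^\infty(\E,T\B)$ with the strong topology.

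The core step is to set up the parametrized jet-evaluation map. Using the splitting $J^k(\E)=J_V^k(\E,\R)\oplus J_X^k(\E,T\B)$, I would define $\Phi:\big(\C^\infty(\E,\R)\times\C^\infty(\E,T\B)\big)\times\E\to J^k(\E)$ by $\Phi\big((V,X),e\big)=j^k(V,X)(e)$, and show that $\Phi$ is a submersion. Concretely, given $e_0\in\E$ and any prescribed target jet, one realizes it by adding to $(V,X)$ a perturbation supported near $e_0$, namely a Taylor polynomial of degree $\leq k$ in local fibre-and-base coordinates multiplied by a bump function. Here one must honour the covering constraint defining $J_X^k(\E,T\B)$, i.e.\ that $X(e)\in T_{\pi(e)}\B$; but since that jet space is itself assembled only from covering maps, the admissible perturbations of $X$ already surject onto its jet fibre, and the free perturbations of $V$ surject onto $J_V^k$. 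A submersion is transversal to every $Q_\alpha$, which is precisely the hypothesis needed to run the parametric transversality theorem.

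I would then invoke parametric transversality: since $\Phi$ is transversal to $Q_\alpha$, the set of parameters $(V,X)$ for which the induced map $e\mapsto j^k(V,X)(e)$ is transversal to $Q_\alpha$ is residual, and intersecting over the countably many strata gives density of $\mathcal O_Q$. For openness I would use that the closure $\overline{\Sigma^I}$, and thus $Q$, is a closed algebraic subset of $J^k(\E)$ \cite{Arnold_singularities}: transversality to a closed stratified set is an open condition in the strong topology, while the properness in CDE.1 and the local compactness of $\sv$ in CDE.2 guarantee that the jet extension and the induced stratification are well behaved over compacta. This yields the open and dense set $\mathcal O_Q$.

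Finally, once $j^k(V,X)$ is transversal to $Q$, the stratified transversal preimage theorem gives $\big(j^k(V,X)\big)^{-1}(Q)=\bigcup_\alpha\big(j^k(V,X)\big)^{-1}(Q_\alpha)$ as a stratified subset of $\E$ with $\mathrm{codim}\,\big(j^k(V,X)\big)^{-1}(Q_\alpha)=\mathrm{codim}\,Q_\alpha$; in particular its top stratum has codimension $p$, so the preimage is a codimension-$p$ stratified subset of $\E$. I expect the main obstacle to be the submersion property of $\Phi$ under the covering constraint on $X$ — that admissible local perturbations of $(V,X)$ really span the full jet fibre $J^k(\E)$ — together with the upgrade from the merely residual set furnished by parametric transversality to the open and dense set claimed, which is exactly where the closedness of $Q$ and the conditions CDE.1--CDE.2 enter.
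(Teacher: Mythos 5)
The paper does not prove this statement: Theorem \ref{teo:trans} is quoted as a standard background result from singularity theory, adapted to the CDE setting of \cite{Takens1}, with the supporting facts (that $\overline{\Sigma^I}$ is algebraic and that $\Sigma^I$ is a submanifold of $J^\ell(\E)$) themselves referred to \cite{Arnold_singularities}. So there is no in-paper argument to compare yours against; what you have written is, in effect, a reconstruction of the classical proof.

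As such a reconstruction it is essentially sound and follows the canonical route: stratum-by-stratum reduction, the parametrized jet-evaluation map $\Phi$ as a submersion via bump-function perturbations, parametric (Thom--Sard) transversality for residuality, countable intersection for density, and the stratified preimage theorem for the codimension count. Your handling of the covering constraint on $X$ is the right point to worry about and your resolution is correct: the fibre of $J_X^k(\E,T\B)$ is by definition generated by jets of covering maps, so admissible perturbations of $X$ alone fill it out, and $V$ is unconstrained. The one place where your argument is genuinely incomplete is the openness claim. Transversality to an individual stratum $Q_\alpha$ that is not closed is \emph{not} an open condition, and the frontier condition you invoke does not repair this; what is needed is Whitney condition (a) (or (a)-regularity of the stratification), under which transversality to all strata of a \emph{closed} stratified set becomes open in the strong topology. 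This is available here because $\overline{\Sigma^I}$ is algebraic and algebraic sets admit Whitney stratifications \cite{Arnold_singularities, GG}, but you should say so explicitly rather than resting on the frontier condition. With that adjustment, and noting that CDE.1--CDE.2 only enter to control behaviour over compacta of $\B$ rather than to establish the submersion property, your sketch is a correct proof outline of the quoted theorem.
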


     \begin{definition} [\sc Generic CDE]\label{def:generic}    
     Let $I=(i_1,i_2,\ldots,i_k)$ be a sequence of positive integers such that $i_1\geq i_2 \geq \cdots \geq i_k$. We say that a CDE $(V,X)$ is \emph{generic} if $j^k(V,X)$ is transversal to $\Sigma ^I\subset J^\ell(\E)$, with $(\ell\geq k)$.
     \end{definition}

     In the rest of this document, the term \emph{generic} refers to definition \ref{def:generic}.

     \begin{remark} The analysis of the present document is local. Therefore, we identify the fibre bundle $\pi:\E\to\B$ with the trivial fibre bundle $\pi:\R^n\times\R^m\to\R^m$. Moreover, by definition \ref{def:generic}, let $e\in\R^n\times\R^m$ be a point such that $V|\pi^{-1}(\pi(e))$ has a degenerate critical point at $e$. Then, for $m\leq 4$, there are local coordinates such that $V$ can be written as one of the seven elementary catastrophes of table \ref{cats}. Furthermore, the local normal form of the pair $(V,X)$ can be given as a polynomial expression. 
     \end{remark}

     \begin{definition}[\sc The Singularity and Catastrophe sets] The \emph{singularity set}, also called bifurcation set, is locally defined as
     \eq{
     	B = \left\{ (x,\a) \in S_V \; | \; \det \parcs{^2 V}{x^2} = 0 \right\}.
     }

     The projection of $B$ into the parameter space $\pi(B)$ is called the \emph{catastrophe set}, and shall be denoted by $\Delta$.
     \end{definition}
     
     As can be seen from the definitions of this section, many of the topological characteristics of a generic CDE are given by the form of the potential function $V$. It is specially important to know how the critical set of $V$ is stratified. The following example is intended to give a qualitative idea of the geometric objects that one must consider.
     
     \begin{example}[\sc Strata of the Swallowtail catastrophe] \label{ex:1} Consider a CDE $(V,X)$ where the potential function $V$ is given by the swallowtail catastrophe (see table \ref{cats}). Then we have the following sets.
     \begin{center}
     \begin{tabular}{l | r}
     \hline\\
      $\Sigma^I$ & $\Sigma^I(V)=(j^k(V,X))^{-1}\left(\Sigma^I\right)$\\[2ex]
      \hline\\[1ex]
      $\Sigma^1$ & $\sv$\\[1ex]
      $\Sigma^{1,1}$ & $B$, the catastrophe set \\[1ex]
      $\Sigma^{1,1,0}$ & The set of \emph{only} fold points \\[1ex]
      $\Sigma^{1,1,1,0}$ & The set of \emph{only} cusp points \\[1ex]
      $\Sigma^{1,1,1,1}$ & The swallowtail point
      \end{tabular}  
     \end{center}
     
     The sets $\Sigma^{i}(V)$ above are formed as follows (see appendix \ref{app:TB} for the generalization) 
     \eqn{
     & \Sigma^1(V)=\left\{ (x,\alpha)\in\R^4 \, | \, D_xV=0\right\}\\
     & \Sigma^{1,1}(V)=\left\{ (x,\alpha)\in\R^4 \, | \, D_xV=D_x^2V=0\right\}\\
     & \vdots
     }
     
     The strata are manifolds of certain dimension formed by points of the same degeneracy. In our particular example we have
     \begin{eqnarray*}
         \Sigma^{1,0}(V) =\Sigma^{1}(V)\backslash \Sigma^{1,1}(V)  & &\text{Is a three dimensional manifold of regular points of $\sv$}.\\
         \Sigma^{1,1,0}(V) =\Sigma^{1,1}(V)\backslash \Sigma^{1,1,1}(V) & &\text{Is a two dimensional manifold of fold points}.\\
         \Sigma^{1,1,1,0}(V) =\Sigma^{1,1,1}(V)\backslash \Sigma^{1,1,1,1}(V) & &\text{Is a one dimensional manifold of cusp points}.\\
         & \vdots &
     \end{eqnarray*}
     
     Note that we have the inclusion $\sv\supset B\supset\Sigma^{1,1,1}\supset\Sigma^{1,1,1,1}$, which is a generic situation \cite{Arnold_singularities, GG}. The geometric features of the critical points of $V$ have an influence on $X$. Recall that $X$ maps points of the total space to tangent vectors in the base space. Therefore, besides $\sv$ being the phase space of the solutions of $(V,X)$, a generic property of $X$ is to be transversal to the projection of the bifurcation set $B$, that is to $\Delta$.
     \end{example}

     Following example \ref{ex:1}, the critical set of the codimension 3 catastrophes are stratified as shown at the end of this section in figures \ref{fig:st-stratification}, \ref{fig:hu-stratification} and \ref{fig:eu-stratification} respectively.

     \begin{definition}[\sc Topological equivalence \cite{Takens1}] \label{def:equivalence} Let $(V,X)$ and $(V',X')$ be two constrained differential equations. Let $e\in\svm$ and $e'\in S_{V',min}$. We say that $(V,X)$ at $e$ is \emph{topologically equivalent} to $(V',X')$ at $e'$ if there exists a  \emph{local homeomorphism} $h$ form a neighborhood $U$ of $e$ to a neighborhood $U'$ of $e'$, such that if $\gamma$ is a solution of $(V,X)$ in $U$, $h\circ\gamma$ is a solution of $(V',X')$ in $U'$.
     \end{definition}

     Observe that definition \ref{def:equivalence} does not require preservation of the time parametrization, only of direction.

      \subsection{Desingularization}\label{sec:desingularization}
      
      The desingularized vector field $\oX$ of a CDE $(V,X)$ is constructed in such a way that we can relate its integral curves with the solutions of $(V,X)$. An example is given in section \ref{sub:zeeman_s_nerve_impulse_model}. The general process to obtain such vector field is described in the following lines.
     \begin{lemma} [\sc Desingularization \rm{\cite{Takens1}}]  \label{lemma:des} Consider a constrained differential equation $(V,X)$ with $V$ one of the elementary catastrophes. Then the induced \emph{smooth} vector field, called \emph{the desingularized vector field} is given by
     \eq{\label{eq:vf_des}
     	\oX = \det(d\tilde\pi)(d\tilde\pi)^{-1}X(x,\tilde\pi),
     }
     where $\tilde\pi=\pi|S_V$. Furthermore, given the integral curves of the vector field $\oX$ and the map $\tilde\pi$, it is possible to obtain the solution curves of $(V,X)$. 
	
     \end{lemma}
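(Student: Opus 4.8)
The plan is to realize $\oX$ as a smooth scalar multiple of the (a priori singular) vector field that governs the slow motion on $\sv$, and then to argue that multiplication by a scalar merely reparametrizes time, so that oriented trajectories are preserved. First I would fix local coordinates: since $V$ is one of the elementary catastrophes, $\sv=\{D_xV=0\}$ is locally a smooth $m$-dimensional graph over a suitable set of variables, and in these coordinates $\tilde\pi=\pi|\sv$ becomes a smooth map between two copies of $\R^m$ (for the cusp, e.g., $\sv$ is parametrized by $(x,a)\mapsto(x,a,-x^3-ax)$ and $\tilde\pi(x,a)=(a,-x^3-ax)$). If $\gamma$ is a solution of $(V,X)$ lying on $\svm$, then by property S2 and the covering property of $X$ we have $\frac{d}{dt}\pi(\gamma)=X(\gamma)$; since $\pi|\sv=\tilde\pi$, reading this in the $\sv$-parametrization gives $d\tilde\pi\,\dot\gamma=X(\gamma)$, hence $\dot\gamma=(d\tilde\pi)^{-1}X(\gamma)$ wherever $d\tilde\pi$ is invertible, i.e.\ off $B=\{\det(d\tilde\pi)=0\}$. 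Write $Y=(d\tilde\pi)^{-1}X$ for this induced field: it carries precisely the CDE dynamics on $\sv$ but blows up along $B$.

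Next I would clear the singularity with the adjugate identity $\mathrm{adj}(A)\,A=A\,\mathrm{adj}(A)=\det(A)\,I$, valid for every square matrix $A$ and giving $\mathrm{adj}(A)=\det(A)A^{-1}$ when $A$ is invertible. Setting
\[
\oX:=\det(d\tilde\pi)\,(d\tilde\pi)^{-1}X=\mathrm{adj}(d\tilde\pi)\,X,
\]
the entries of $\mathrm{adj}(d\tilde\pi)$ are $(m-1)\times(m-1)$ minors of $d\tilde\pi$, hence polynomials in its smooth entries; as $X$ is smooth, $\oX$ is smooth on the whole $\sv$-chart, including across $B$. This yields at once the smoothness assertion and the formula \eqref{eq:vf_des}.

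Finally I would match integral curves of $\oX$ with solutions of the CDE. Off $B$ one has $\oX=\det(d\tilde\pi)\,Y$, so $\oX$ and $Y$ are pointwise proportional with factor $\det(d\tilde\pi)$; thus an integral curve $\bar\gamma(s)$ of $\oX$ and a solution $\gamma(t)$ of $(V,X)$ on $\sv$ trace the same set under the time change $\frac{dt}{ds}=\det(d\tilde\pi(\bar\gamma(s)))$, the direction being preserved or reversed according to the sign of $\det(d\tilde\pi)$. I would then pin down that sign by relating, via the implicit function theorem and the chain rule, $\det(d\tilde\pi)$ to $\det D_x^2V$ (the two agree up to a nowhere-zero factor on the chart); since $D_x^2V$ is definite on $\svm$, $\det(d\tilde\pi)$ has constant sign there, so the oriented unparametrized integral curves of $\oX$ coincide with the solution arcs of $(V,X)$, and pushing them forward by $\tilde\pi$ — together with the jump rule S3 each time a trajectory reaches $B$ — reconstructs the full solutions.

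The main obstacle is the behavior at $B$ and the attendant sign bookkeeping. The reparametrization $\frac{dt}{ds}=\det(d\tilde\pi)$ degenerates exactly on $B$, so I must verify that the smooth $\oX$-flow, which crosses $B$ in finite $s$, corresponds under $t(s)=\int_0^s\det(d\tilde\pi(\bar\gamma))\,d\sigma$ to a CDE solution that reaches $B$ in finite time and there jumps — i.e.\ that desingularization trades the finite-time blow-up of $Y$ for a regular transversal crossing, creating and destroying no trajectories. Carrying out the identity $\det(d\tilde\pi)=\pm\det D_x^2V$ in the two-variable umbilic cases ($n=2$) is the computational core of the sign argument, and it is here that genericity of $(V,X)$ — hence the clean stratification of $\sv$ and the transversality of $X$ to $\Delta$ — is invoked to ensure the crossings are indeed transversal.
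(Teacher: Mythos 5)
Your proposal is correct and takes essentially the same route as the paper's own proof in the appendix: parametrize $\sv$ by suitable coordinates, pull $X$ back by $(d\tilde\pi)^{-1}$, observe that multiplying by $\det(d\tilde\pi)$ yields the adjugate (the smooth extension of $A\mapsto\det(A)A^{-1}$) and hence a smooth field, and recover the solutions of $(V,X)$ by reparametrization, reversing direction where $\det(d\tilde\pi)<0$. Your added observations --- the identity $\det(d\tilde\pi)=\pm\det D_x^2V$ and the discussion of crossing $B$ --- are elaborations the paper leaves implicit rather than a different argument; the only small slip is that $D_x^2V$ is merely positive semi-definite on $\svm$ (definite only off $B$), which still gives the constant sign you need on $\svm\setminus B$.
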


     For a proof and details see appendix \ref{app:des}. Once the desingularized vector field \eqref{eq:vf_des} is known, the solutions of $(V,X)$ are obtained from the integral curves of $\oX$. First by changing the coordinates according to the parametrization due to $\tilde\pi$. In cases where $\det(d\tilde\pi)<0$, we reverse the direction of the solutions.\\
     \begin{remark}
         Let $(V,X)$ and $(V',X')$ be topologically equivalent CDEs. From definition \ref{def:equivalence} the homeomorphism $h$ also maps $\svm$ to $S_{V',min}$. On the other hand, it is straightforward to see that right equivalent functions have diffeomorphic critical sets. This means that we can pic and fix a representative of generic potential functions. The natural choose for low number of parameters is one of the seven elementary catastrophes. Then, our problem reduces to study the topological equivalence of CDEs $(V,X)$ and $(V,X')$, that is with the same (up to right equivalence) potential function. Denote by $\oX$ and $\overline X'$ the corresponding desingularized vector fields. It is then clear that if $\oX$ and $\overline X'$ are topologically equivalent, so are the CDEs $(V,X)$ and $(V,X')$.
     \end{remark}
     Now, let us take the notation as introduced for the catastrophes in section \ref{sec:cat}. We have the following list of desingularized vector fields. 

     \begin{corollary}\label{cor:cod3} Let $(V,X)$ be a constrained differential equation with the potential function $V$ given by a codimension 3 catastrophe (see table \ref{cats}). Let the map $X:\E\to T\B$ be given in general form as $X=f_a\parc{a}+f_b\parc{b}+f_c\parc{c}$, where $f_a, f_b, f_c$ are smooth functions of the total space $\E$. Then the corresponding desingularized vector fields  $\oX$ read as

     \begin{itemize}
     	\item Swallowtail: 
         \begin{small}
             \begin{flalign}
                 \oX = -(4x^3+2ax+b)f_a\parc{a} -(4x^3+2ax+b)f_b\parc{b}+(x^2f_a+xf_b+f_c)\parc{x}. &&
             \end{flalign}
         \end{small}
     	\item Elliptic Umbilic:
         \begin{small}
             \begin{flalign}
                 \begin{array}{@{\hspace{0mm}}r@{\;}l@{\hspace{0mm}}}
                  	 \oX = &(4a^2-36x^2-36y^2)f_a\parc{a} + \left((12x^2-4ax-12y^2)f_a+(6x-2a)f_b-6yf_c\right)\parc{x} +\\
                            &(-4y(a+6x)f_a-6yf_b-(2a+6x)f_c)\parc{y}. 
                 \end{array} &&
             \end{flalign} \hfill
         \end{small}
     	\item Hyperbolic Umbilic: 
         \begin{small}
             \begin{flalign}
                 \oX =(36xy-a^2)f_a\parc{a} + ((ax-6y^2)f_a-6y f_b + a f_c)\parc{x} + ((ay-6x^2)f_a+af_b-6xf_c)\parc{y}. &&
             \end{flalign}
         \end{small}
	
     \end{itemize}
	
     \end{corollary}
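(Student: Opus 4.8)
The plan is to obtain each formula by a direct application of the desingularization identity \eqref{eq:vf_des} of Lemma \ref{lemma:des}, treating the three codimension~3 catastrophes one at a time. For a fixed catastrophe I would first write down the equations cutting out $\sv$, namely $\parcs{V}{x}=0$ (together with $\parcs{V}{y}=0$ in the two umbilic cases), and then solve them explicitly for the ``highest'' parameters. For the Swallowtail one solves for $c$, getting $c=-(x^4+ax^2+bx)$, so that $(x,a,b)$ is a chart of $\sv$; for the Elliptic and Hyperbolic Umbilics one solves the pair of equations for $(b,c)$, so that $(x,y,a)$ is a chart. In every case this produces an explicit polynomial expression for the restricted projection $\tilde\pi=\pi|\sv$ in the chosen chart.

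The observation that keeps the computation finite is that $d\tilde\pi$ is never actually inverted. Writing $M=d\tilde\pi$ for its Jacobian in the chart, the classical adjugate identity $\det(M)\,M^{-1}=\mathrm{adj}(M)$ converts the expression in Lemma \ref{lemma:des} into
\eqn{
\oX=\det(d\tilde\pi)\,(d\tilde\pi)^{-1}X=\mathrm{adj}(d\tilde\pi)\begin{pmatrix} f_a\\ f_b\\ f_c\end{pmatrix},
}
whose entries are polynomials in the chart variables and in $f_a,f_b,f_c$. This is precisely the mechanism by which the singular factor $\det(d\tilde\pi)$ is absorbed and $\oX$ turns out smooth, so the whole proof reduces to expanding one $3\times3$ cofactor matrix per catastrophe.

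To illustrate for the Swallowtail, differentiating $\tilde\pi(x,a,b)=(a,b,-(x^4+ax^2+bx))$ yields
\eqn{
d\tilde\pi=\begin{pmatrix} 0 & 1 & 0\\ 0 & 0 & 1\\ -(4x^3+2ax+b) & -x^2 & -x\end{pmatrix},
}
so that $\det(d\tilde\pi)=-(4x^3+2ax+b)$ and applying $\mathrm{adj}(d\tilde\pi)$ to $(f_a,f_b,f_c)^\top$ returns exactly the three stated components, the $\parc{x}$-entry being $x^2f_a+xf_b+f_c$. I expect the two umbilic cases to go the same way: the analogous Jacobians have determinants $4a^2-36x^2-36y^2$ (Elliptic) and $36xy-a^2$ (Hyperbolic), matching the leading $\parc{a}$-coefficients in the statement, while the remaining rows of the adjugate supply the $\parc{x}$- and $\parc{y}$-components.

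The hard part is not the mathematics but the bookkeeping. I would need to fix once and for all a consistent ordering of the chart coordinates and of the base coordinates $(a,b,c)$, so that the rows of $\mathrm{adj}(d\tilde\pi)$ line up with the correct $\parc{x},\parc{y},\parc{a},\parc{b}$ directions, and I would have to confirm that the chosen parameter elimination really does give a valid chart of $\sv$ on the neighborhood of interest (equivalently, that the retained parameters together with the state variables form independent coordinates). With the chart fixed, the three verifications are mechanical determinant and cofactor expansions.
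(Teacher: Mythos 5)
Your proposal is correct and is exactly the paper's route: the paper's proof is simply ``straightforward computations following Lemma \ref{lemma:des}'', and the general chart construction (solving $\parcs{V}{x}=0$ for the appropriate parameters, forming $\tilde\pi=\pi|\sv$, and exploiting the smooth extension $A\mapsto\det(A)A^{-1}=\mathrm{adj}(A)$) is spelled out in Appendix \ref{app:des}. Your worked Swallowtail adjugate and the predicted determinants $4a^2-36x^2-36y^2$ and $36xy-a^2$ for the umbilics all check out against the stated formulas.
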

	 \begin{proof}
	 	Straightforward computations following lemma \ref{lemma:des}.
	 \end{proof}
     We end this section with Takens's theorem on normal forms of constrained differential equations with two parameters.

     \begin{theorem}[\sc Takens's Normal Forms of CDEs \textup{\cite{Takens1}}]\label{teo:takens_cde}
     Let $\pi:\E \to \B $ be as in definition \ref{def:CDE} and let $\dim(\B)=2$. Then there are {\rm{12 normal forms}} (under topological equivalence, definition \ref{def:equivalence}) of generic constrained differential equations, which are given by

     \begin{minipage}[t]{0.3\textwidth}
     \begin{tabular}[t]{l|l}
     	\hline
     	\multicolumn{2}{c}{\emph{Regular}}\\
     	\hline
     	\multicolumn{1}{c|}{$V(x,a,b)$} &  \multicolumn{1}{c}{$X(x,a,b)$}\\[0ex]\hline\\[-2ex]
     	\multirow{4}{*}{\footnotesize $\dfrac{1}{2}x^2$}	& \footnotesize $\parc{a}$ \\[2ex]\cline{2-2}\\[-2ex]
     										& \footnotesize $a\parc{a}+b\parc{b} $\\[2ex]\cline{2-2}\\[-2ex]
     										& \footnotesize $a\parc{a}-b\parc{b}$\\[2ex]\cline{2-2}\\[-2ex]
     										& \footnotesize $-a\parc{a}-b\parc{b}$								

     \end{tabular}
     \end{minipage}
     \begin{minipage}[t]{0.35\textwidth}
     \begin{tabular}[t]{l|l}
     	\hline
     	\multicolumn{2}{c}{\emph{Fold}}\\
     	\hline
     	\multicolumn{1}{c|}{$V(x,a,b)$} &  \multicolumn{1}{c}{$X(x,a,b)$}\\[0ex]\hline\\[-2ex]
     	\multirow{6}{*}{\footnotesize $\dfrac{1}{3}x^3+ax$} & \footnotesize $\parc{a}$ \\[2ex]\cline{2-2}\\[-2ex]
     										  & \footnotesize $-\parc{a}$ \\[2ex]\cline{2-2}\\[-2ex]
     										  & \footnotesize $(a+3x)\parc{a}+\parc{b}$ \\[2ex]\cline{2-2}\\[-2ex]
     										  & \footnotesize $(a-3x)\parc{a}+\parc{b}$\\[2ex]\cline{2-2}\\[-2ex]
     										& \footnotesize $-b\parc{a}+\parc{b}$\\[2ex]\cline{2-2}\\[-2ex]
     										& \footnotesize $(b+x)\parc{a}+\parc{b}$
     \end{tabular}
     \end{minipage}
     \begin{minipage}[t]{0.375\textwidth}
     	\begin{tabular}[t]{l|l}
     		\hline
     		\multicolumn{2}{c}{\emph{Cusp}}\\
     		\hline
     		\multicolumn{1}{c|}{$V(x,a,b)$} &  \multicolumn{1}{c}{$X(x,a,b)$}\\[0ex]\hline\\[-2ex]
     		\footnotesize $\dfrac{1}{4}x^4+ax^2+bx$ & \footnotesize $\parc{b}$ \\[2ex]\hline\\[-2ex]
     		\footnotesize $-\left(\dfrac{1}{4}x^4+ax^2+bx\right)$ & \footnotesize $\parc{b}$
     	\end{tabular}
     \end{minipage}
     \label{teo:Takens}
     \end{theorem}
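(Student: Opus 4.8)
The plan is to fix the potential function up to right-equivalence, compute the induced desingularized vector field for each admissible catastrophe, cut the infinite-dimensional problem down to finitely many generic jets via Thom's transversality theorem, and then classify the resulting planar dynamics up to topological equivalence, finally checking that the list is both exhaustive and irredundant.

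First I would reduce the potential. By the remark preceding Corollary~\ref{cor:cod3}, right-equivalent potentials have diffeomorphic critical sets, so the topological type of $(V,X)$ depends on $V$ only through its right-equivalence class; hence we may replace $V$ by a preferred representative. Since $\dim(\B)=2$, Thom's classification theorem forces $V$ (up to sign, addition of Morse summands, and addition of functions of the parameters alone) to be one of the three germs of codimension $\leq 2$: the Morse germ $\tfrac12 x^2$, the Fold $\tfrac13 x^3+ax$, and the Cusp $\tfrac14 x^4+ax^2+bx$. This splits the statement into the three blocks \emph{Regular}, \emph{Fold} and \emph{Cusp} of the table, and reduces the theorem to classifying the covering field $X$ for each fixed $V$.

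For each representative I would write $X=f_a\parc{a}+f_b\parc{b}$ with $f_a,f_b$ smooth on $\E$ and compute the desingularized field $\oX$ from Lemma~\ref{lemma:des}, exactly as in Corollary~\ref{cor:cod3} but for these lower-codimension germs. The genericity step is then to invoke Theorem~\ref{teo:trans}: transversality of $j^k(V,X)$ to the strata $\Sigma^I$ and of $X$ to the catastrophe set $\Delta=\pi(B)$ turns the classification into finitely many open conditions on the low-order jets of $f_a,f_b$ at the singularity, leaving only finitely many typical configurations per block. I would then classify block by block. In the \emph{Regular} case $\tilde\pi=\pi|\sv$ is a local diffeomorphism, so $\oX$ is smoothly conjugate to the planar field $\tilde X$; a generic point is either regular, where the flow-box theorem yields $\parc{a}$, or a hyperbolic equilibrium, whose topological types in the plane are source, sink and saddle (node and focus coinciding topologically, while orientation of orbits separates source from sink), giving the four Regular forms. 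In the \emph{Fold} case $\sv=\{x^2+a=0\}$ is smooth but $\tilde\pi$ drops rank along $B=\{x=0,\,a=0\}$, and $\det(d\tilde\pi)$ changes sign across $B$; this reverses the time orientation on one side and encodes the jump property S3, so the relevant invariants are the transversality and sign of $X$ along $\Delta$ together with the position and type of any equilibrium of $\oX$ relative to the fold, which produce the six listed forms. In the \emph{Cusp} case the two fold branches meeting at the cusp point, together with the overall sign $\pm V$, leave exactly two forms, and counting yields $4+6+2=12$.

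I expect the main obstacle to be the topological-equivalence step near the singular set $B$, rather than the reductions above. There one must build the homeomorphism $h$ of Definition~\ref{def:equivalence} across a locus where the desingularization flips the time direction and where solutions are discontinuous, jumping along fibers of $\pi$; matching the orbit structure \emph{and} the jump segments of each tabulated model with those of an arbitrary generic CDE, while simultaneously exhibiting invariants that certify the twelve forms are pairwise inequivalent, is the delicate part of the argument.
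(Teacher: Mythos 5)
The paper does not actually prove Theorem \ref{teo:takens_cde}: it is imported verbatim from Takens \cite{Takens1} and serves as background for the three-parameter extension. So the only meaningful comparison is with the methodology the paper applies to its own Theorem \ref{teo:main} in Section \ref{sec:proof} (desingularize, reduce to a center manifold, apply the normal form theorem, undo the coordinate change), and your outline does follow that same route: fix $V$ by Thom's classification, compute $\oX$, invoke transversality to cut down to finitely many jet conditions, classify block by block, and transfer back. The Regular and Cusp blocks are handled correctly this way.

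The genuine gap is in the Fold block. You propose to read off the six forms from ``the transversality and sign of $X$ along $\Delta$ together with the position and type of any equilibrium of $\oX$ relative to the fold'', i.e.\ essentially from the hyperbolic classification of planar equilibria of $\oX$. That does not reproduce the list. For the fold, $\oX=f_a\parc{x}-2xf_b\parc{b}$ in the chart $(x,b)$ on $\sv$ (with $a=-x^2$), so its linearization at a folded singularity is $\bigl(\begin{smallmatrix}\partial_xf_a & \partial_bf_a\\ -2f_b & 0\end{smallmatrix}\bigr)$; a source/sink/saddle trichotomy of isolated hyperbolic folded equilibria would yield five forms (two flow-boxes plus saddle, folded source, folded sink) and in particular would predict a folded sink, which is absent from the table. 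Conversely, two of the six listed forms, $(a\pm3x)\parc{a}+\parc{b}$, have $\partial_bf_a=0$ at the folded point, so the linearization has a zero eigenvalue and in fact $\oX$ vanishes along the entire fold line: these are precisely the cases where the center-manifold reduction and the normal-form theorem (Appendices \ref{app:cm} and \ref{app:nf}, which your outline skips) are needed, and where the topological type is decided not by the equilibrium type of $\oX$ but by how the CDE solutions --- which live only on $\svm$, carry a time reversal where $\det(d\tilde\pi)<0$, and terminate or continue at $B$ --- reach or leave the fold line (transversally in finite time versus tangentially to $\Delta$). Without that analysis the count $4+6+2=12$ is asserted rather than derived, and the pairwise inequivalence of the six fold forms, which you correctly flag as the delicate point, has no invariants to rest on.
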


     \begin{remark}\label{remark_cde}
      $ $
      \begin{itemize}
          \item In the fold case of theorem \ref{teo:Takens}, one extra parameter is considered (see the catastrophes list in section \ref{sec:cat}). Due to this fact, instead of having a fold singularity point at $(x,a)=(0,0)$, there is a fold line $\lbrace (x,a,b)=(0,0,b) \rbrace$. In the case $\E$ is 2-dimensional, this is, $(V,X)=\left(\sfrac{x^3}{3}+ax,g(x,a)\parc{a}\right)$, the corresponding normal forms read
        \eq{
        V(x,a)&=\sfrac{x^3}{3}+ax \quad , \qquad X=\parc{a}.\\
        V(x,a)&=\sfrac{x^3}{3}+ax \quad , \qquad X=-\parc{a}.\\
        }
  
       \item Although the classification under topological equivalence may seem too coarse, it is the simplest one. Recall the well-known fact \cite{Arnold2, HT} that there is no topological difference between the phase portraits shown in figure \ref{fig:2dvfs}.

     \begin{figure}[htbp]
     \centering 
     \subfloat { \makebox[0.25\textwidth] {\includegraphics[scale=0.65]{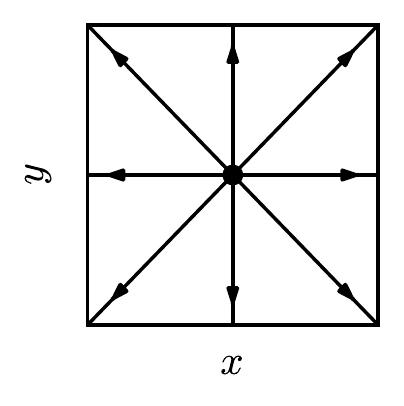}   }} 
       \subfloat { \makebox[0.25\textwidth]  {\includegraphics[scale=0.65]{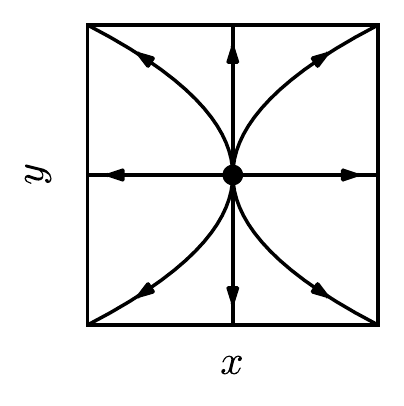} }} 
       \subfloat { \makebox[0.25\textwidth] {\includegraphics[scale=0.65]{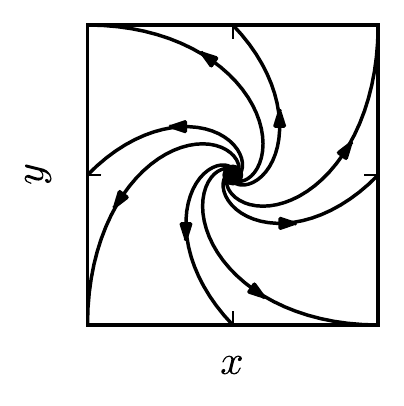}  }}
       \caption{ \label{fig:2dvfs} Topologically equivalent sources.}
     \end{figure}

     On the other hand, for application purposes, a smoother equivalence relation could be required. This would give an infinite classification since for two vector fields to be smoothly equivalent, their linear parts are to have the same spectrum. Still, if desired, the procedure to obtain a smooth normal form follows almost the same lines as below. The only difference is to skip the center manifold reduction, see section \ref{sec:nf}.
      \end{itemize}
     \end{remark}

     \begin{figure}[htbp]\centering
             \includegraphics[scale=1]{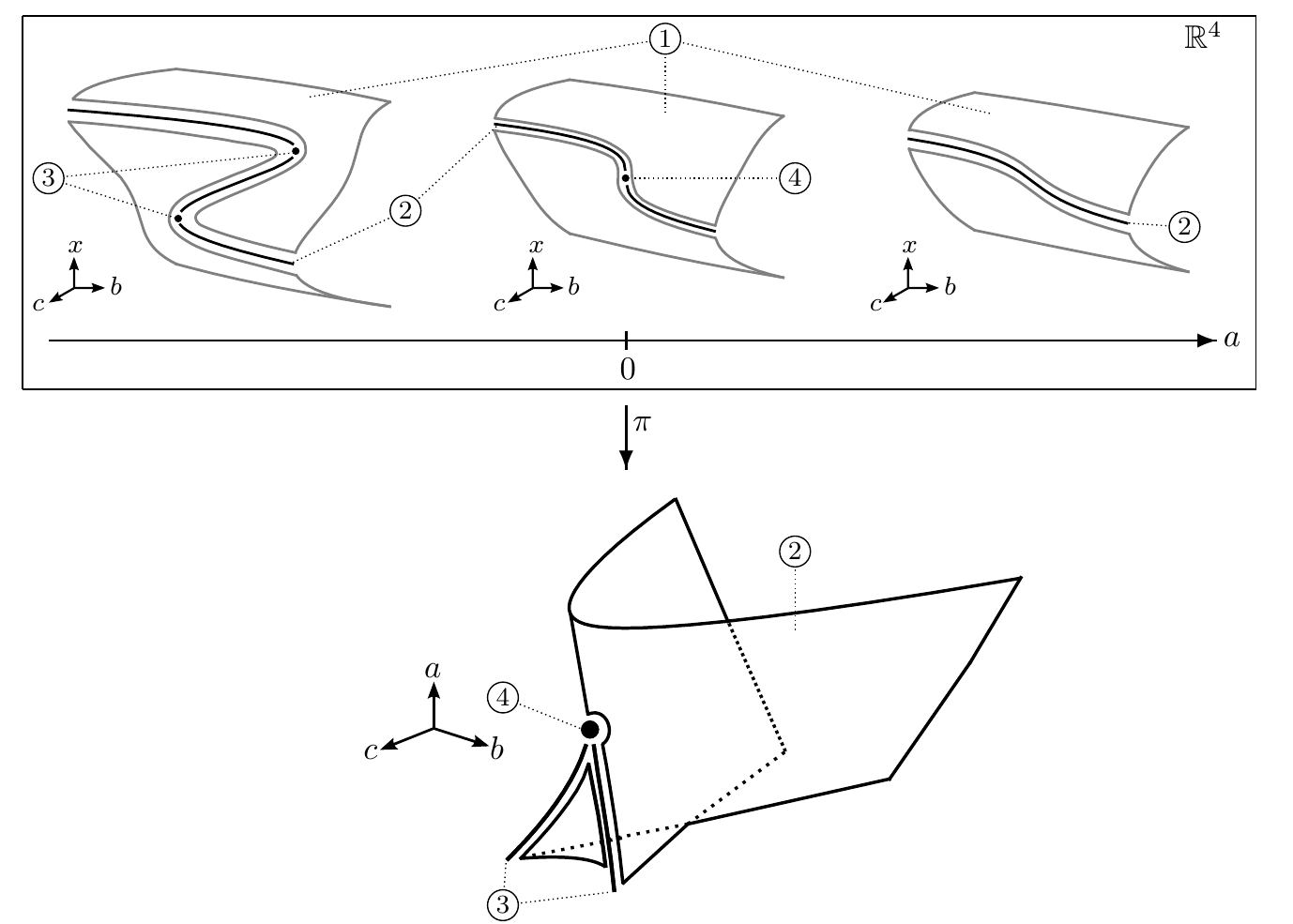} 
             \caption{Stratification of the swallowtail catastrophe. The total space is \ensuremath{\R^4}. Therefore, we show some representative tomographies. In the top figure we show the stratification of the set of critical points of the swallowtail catastrophe (refer to example \ref{ex:1}). \protect\circled{1} represents the \ensuremath{3}-dimensional set of regular points of \ensuremath{\sv}, this is \ensuremath{\sv\backslash B}.  \protect\circled{2} indicates a \ensuremath{2}-dimensional surface of folds. \protect\circled{3} denotes a $1$-dimensional curve of cusps. \protect\circled{4} represents the central singularity (at the origin) which is the swallowtail point. Note that with such notation \ensuremath{B=\text{\protect\circled{2}}\cup\text{\protect\circled{3}}\cup\text{\protect\circled{4}}}. In the bottom picture we present the projection of the singularity set, this is \ensuremath{\Delta=\pi(B)}. The same numbered notation is used to indicate the different strata.}
			 \label{fig:st-stratification}
         \end{figure}
         \newpage
     \begin{figure}[htbp]\centering    
             \subfloat[Stratification of the Hyperbolic Umbilic.]{\includegraphics[scale=1]{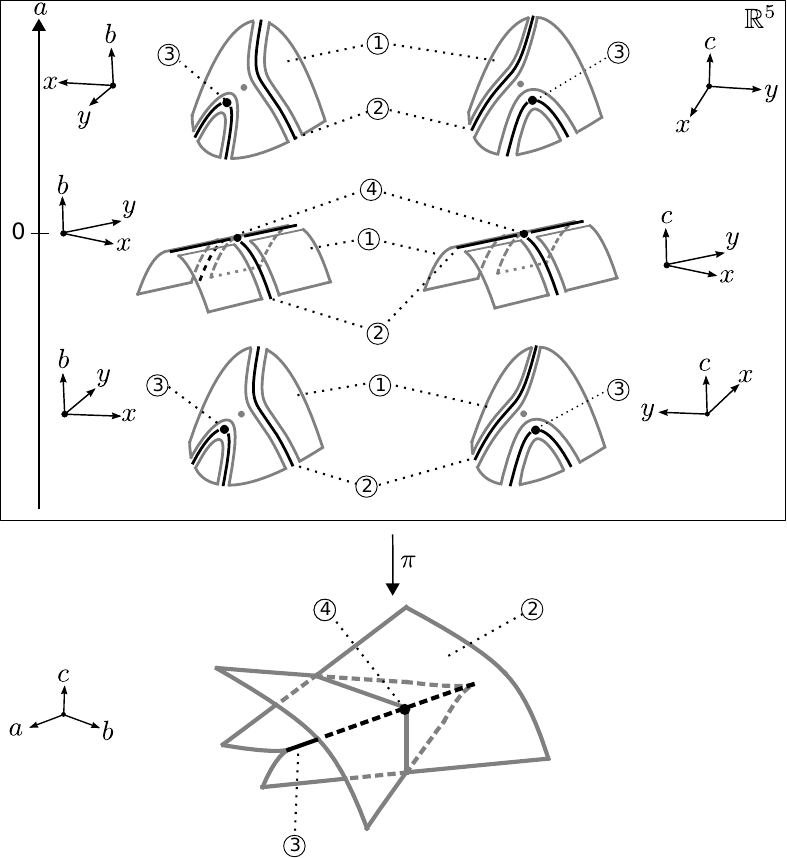} \label{fig:hu-stratification}}\hspace*{0.1cm}
             \subfloat[Stratification of the Elliptic Umbilic.]{\includegraphics[scale=.925]{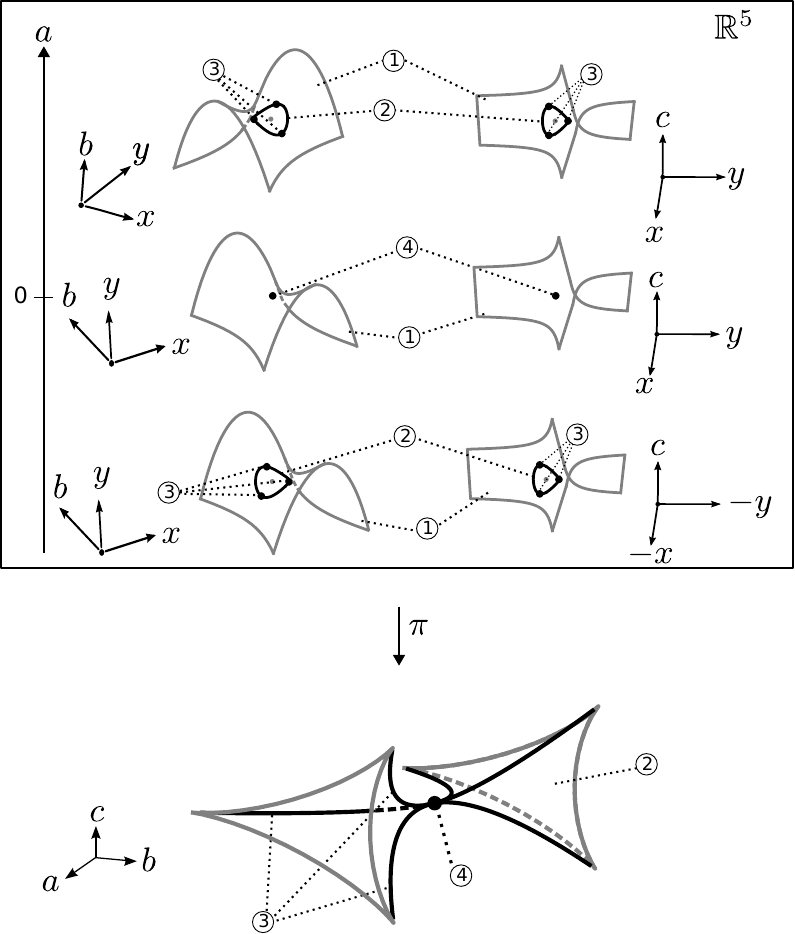} \label{fig:eu-stratification}}
     \caption{We follow the same numbered notation as in figure \protect\ref{fig:st-stratification}. \protect\circled{1} The \ensuremath{3}-dimensional manifold of regular points of \ensuremath{\sv}, this is \ensuremath{\sv\backslash B}. \protect\circled{2} \ensuremath{2}-dimensional surface of folds. \protect\circled{3} \ensuremath{1}-dimensional curve of cusps. \protect\circled{4} The central singularity corresponding to the hyperbolic umbilic in figure \protect\ref{fig:hu-stratification} and to the elliptic umbilic in figure \protect\ref{fig:eu-stratification}.}
     \end{figure}

     \begin{remark}\label{remmark_cats} Figures \ref{fig:st-stratification}, \ref{fig:hu-stratification} and \ref{fig:eu-stratification} play an important role in understanding the behavior of the solutions of generic $CDEs$ with potential function corresponding to a codimension 3 catastrophe. In each figure, the solution curves are contained in the attracting part of $\sv$. By the generic conditions of $X$, we have that for each point $p\in\Delta$, the tangent vector $X(p)$ is transverse to $\Delta$ at $p$. When a solution curve reaches a point in $B$ we generically expect to see a catastrophic change in the behavior of the solutions.     
     \end{remark}

\section{Normal forms of generic constrained differential equations with three parameters}\label{sec:nf}
    In this section we provide the main result of the present paper, phrased in theorem \ref{teo:main}. We give 16 local normal forms of generic constrained differential equations with three parameters. Thereby, we extend the existing Takens's list \cite{Takens1}. The last part of this sections contains the phase portraits of these generic CDEs.\\
	
    Due to the fact that the total space of the CDEs studied in this paper is 4 or 5 dimensional, it is worth to have a qualitative idea of what are the implication of the genericity of the map $X$. So, before stating the main result of the present document, we extend the description of codimension 3 catastrophes given by figures \ref{fig:st-stratification}, \ref{fig:hu-stratification}, and \ref{fig:eu-stratification}. We focus in describing how the geometry of $\sv$ and the genericity of $X$ relate. After this, the results stated in theorem \ref{teo:main} will seem natural.\\

    \subsection{Geometry of the codimenion 3 catastrophes.}\label{sec:geometry}

    In this section we review some of the geometrical aspects of the codimension 3 catastrophes to have an idea of what is their influence in the type of the generic desingularized vector fields. 

    \subsubsection{The Swallowtail} \label{geo:st}
    We recall that the swallowtail catastrophe is given by the potential function
    \eq{\label{eq:potential_st}
    V(x,a,b,c)=\frac{1}{5}x^5+\frac{1}{3}ax^3+\frac{1}{2}bx^2+cx.
    }
    The constraint manifold, this is the phase space of the constrained differential equation $(V,X)$ with potential function given by \eqref{eq:potential_st}, is the critical set of $V$.
    \eq{\label{eq:SV_st}
    S_V=\left\{ (x,a,b,c)\in\R^4\, | \, x^4+ax^2+bx+c=0 \right\}.
    }
    Within the constraint manifold, there are two important sets. The set $\svm$ is the attracting region of $\sv$. The set $B$ consists of singular point of $\sv$, that is where $\sv$ is tangent to the fast foliation. In the present case, the fast foliation consists of a family of curves parallel to the $x$-axis. The previous sets read
    \eq{\label{eq:SVm_st}
    \svm=\left\{ (x,a,b,c)\in \sv\, | \, 4x^3+2ax+b\geq 0 \right\},
    }
    \eq{\label{eq:B_st}
    B=\left\{ (x,a,b,c)\in \sv\, | \, 4x^3+2ax+b = 0 \right\}.
    }
    
    The projection of the singular set $B$ into the parameter space is called the catastrophe set, and it is denoted by $\Delta$ ($\Delta=\pi(B)$). As it is readily seen, the set $\sv$ is 3-Dimensional. In figure \ref{fig:geo_st} we show tomographies of $\sv$ as well as sections of $\Delta$ (see also figure \ref{fig:st-stratification} for the stratification of the swallowtail catastrophe).\\
    \newpage
    \begin{figure}[htbp]\centering
        \begin{tikzpicture}[remember picture, scale=0.9, every node/.style={scale=0.9}]
            \node(f1) at (0,0){
                \put(-145,0){\includegraphics{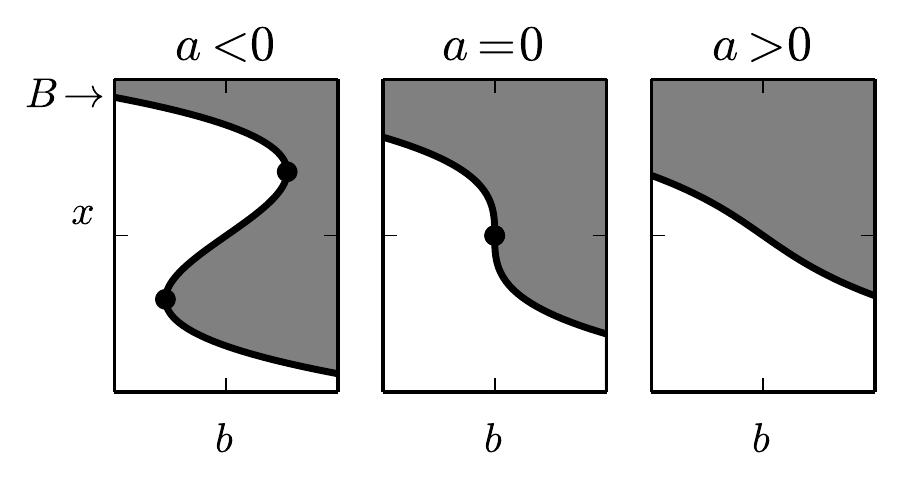}}
            };
            \node(f2) at (f1.south)[yshift=-3.5cm]{
                \put(-135,0){\includegraphics{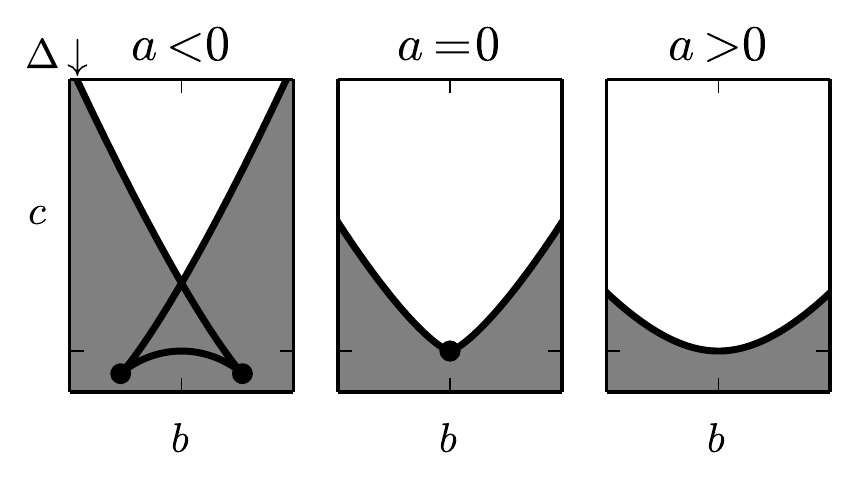}}
            };
            \node at (f1.south)[xshift=0cm, yshift=-0.5cm]{
            \begin{tikzpicture}
                \draw (0,0) edge[->] node[right] {$\pi$} (0,-1);
            \end{tikzpicture}
            };
        \end{tikzpicture}
        \caption{ From left to right we show a tomography of the \ensuremath{3}-dimensional manifold \ensuremath{\sv} for different values of \ensuremath{a} and parametrized by different coordinates. Compare with figure \protect\ref{fig:st-stratification}. The shaded region represents the stable part of \ensuremath{\sv}, that is \ensuremath{\svm}. In each figure the thick curve represents the \ensuremath{2}-dimentional set of folds. For \ensuremath{a<0} the dots stand for the \ensuremath{1}-dimensional set of cusps. For \ensuremath{a=0} the dot represents the central singularity, the swallowtail point. Note that for \ensuremath{a>0} the only singularities of \ensuremath{\sv} are fold points. The projection \ensuremath{\pi} occurs along a one dimensional fast foliation. }
        \label{fig:geo_st}
    \end{figure}
    Recall also that the desingularized vector field reads
    \eqn{
            \oX = -(4x^3+2ax+b)f_a\parc{a} -(4x^3+2ax+b)f_b\parc{b}+(x^2f_a+xf_b+f_c)\parc{x}.}
     Note that a generic condition is $\oX(0)=f_c(0)\parc{x}\neq 0$. This is, we expect that $\oX$ is given by a flow-box in a neighborhood of the central singularity. From figure \ref{fig:geo_st2} we can see that a flow-box in the direction of the $c$-axis is transversal to $\Delta$ in a neighborhood of the swallowtail point.
     
     \begin{figure}[htbp]\centering
         \includegraphics[scale=0.9]{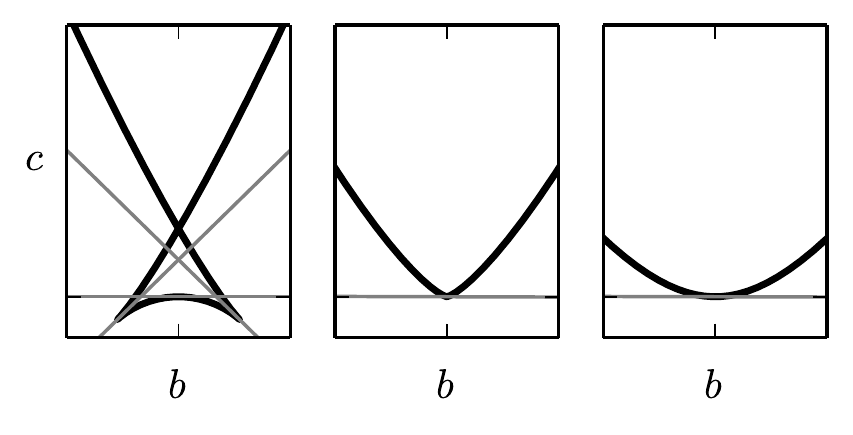}
         \caption{ The thick curve represents section of the catastrophe set \ensuremath{\Delta}. We show some tangent planes to \ensuremath{\Delta} in a neighborhood of the Swallowtail point. A generic condition of the map \ensuremath{X} is to be transversal to \ensuremath{\Delta}. So, observe that a flow-bow in the direction of the \ensuremath{c}-axis would have this property.  }
         \label{fig:geo_st2}
     \end{figure}
     
     On the other hand, the fast fibers are parallel lines to the $x$-axis. If a trajectory jumps, it does so along such a fiber. A jump of a trajectory from a singular point to a stable branches of $S_V$ is expected only when $a<0$ as this is the only case where equation defining $S_V$ \eqref{eq:SV_st} may have more than two distinct real roots. We show in figure \ref{fig:geo_st3} the projections of the singular set $B$ into the manifold $S_V$, representing the possible jumps to be encountered. 
     
     \begin{figure}[htbp]\centering
         \includegraphics[scale=0.9]{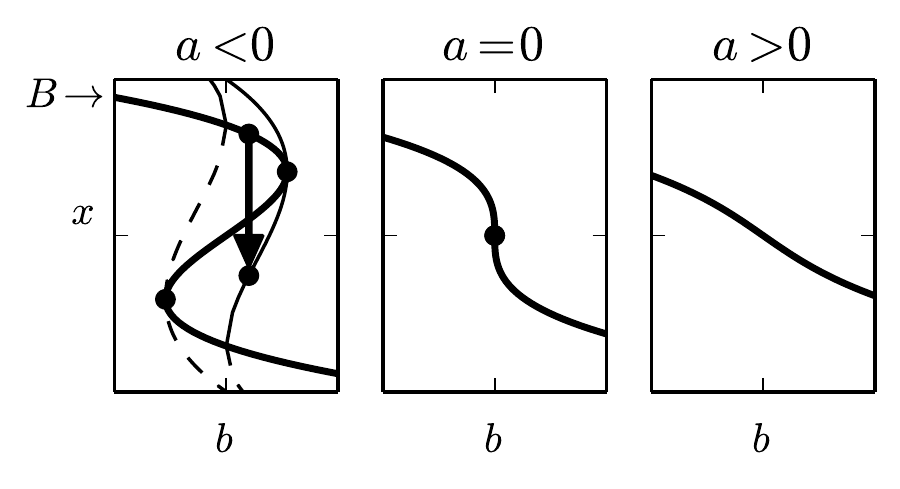}
         \caption{ For values of \ensuremath{a<0} a trajectory may jump. A jump is a infinitely fast transition from a singular point of the manifold \ensuremath{\sv} to a stable part of \ensuremath{\sv}. The transition occurs along a one dimensional fiber. The thick lines represent the singularity set \ensuremath{B}, and the thin lines represent the projection of \ensuremath{B} into \ensuremath{\sv}. Such lines represent possible arriving points when a jump occurs. We show also a possible jump situation represented as an arrow starting in \ensuremath{B} and arriving at the projection of \ensuremath{B} in to \ensuremath{\svm} (the attracting part of \ensuremath{\sv}). }
         \label{fig:geo_st3}
     \end{figure}
	
    \subsubsection{The Hyperbolic Umbilic} \label{geo:hu}
     We proceed as in the previous section with a geometric description of the hyperbolic umbilic singularity. Recall that the corresponding catastrophe reads
     \eqn{
     V(x,y,a,b,c)=x^3+y^3+axy+bx+cy.
     }
    
     Now we have two constraint variables $(x,y)$ (as opposed to the swallowtail singularity where the constraint variable is $x$). This means that the fast foliation is a family of planes parallel to $(x,y,0,0,0)\in\R^5$. The critical set of $V$ is given by
     \eqn{
     	\sv = \left\{ (x,y,a,b,c)\in\R^5 \; | \; b=-3x^2-ay, \; c=-3y^2+ax \right\}.
     }
     There are attracting points within $\sv$ defined as
     \eqn{
     	\svm = \left\{ (x,y,a,b,c)\in\sv \; | \;  \begin{bmatrix}
     		6x & a\\
     		a & 6y
             \end{bmatrix} \geq 0\right\}.
     }
    The singular set of $\sv$ is formed by all the points which are tangent to the fast fibers. Recall that now the fibration is given by parallel planes to the $(x,y,0,0,0)$ space. Such singular set reads
     \eqn{
     	B = \left\{ (x,y,a,b,c)\in\sv \; | \; 36xy-a^2 = 0\right\}.
     } 
    
     We show in figure \ref{fig:geo_hu1} some tomographies of the constraint manifold $\sv$ as well as sections of the singular set $B$.\\
    
     \newpage
     \begin{figure}[htbp]\centering
         \begin{tikzpicture}[remember picture, scale=0.9, every node/.style={scale=0.9}]
             \node(f1) at (0,0){
                 \put(-145,0){\includegraphics{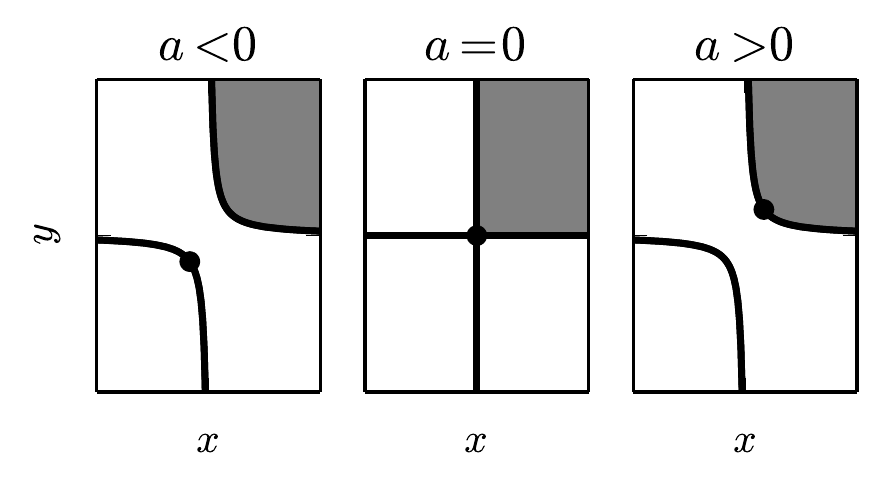}}
             };
             \node(f2) at (f1.south)[yshift=-3.5cm]{
                 \put(-135,0){\includegraphics{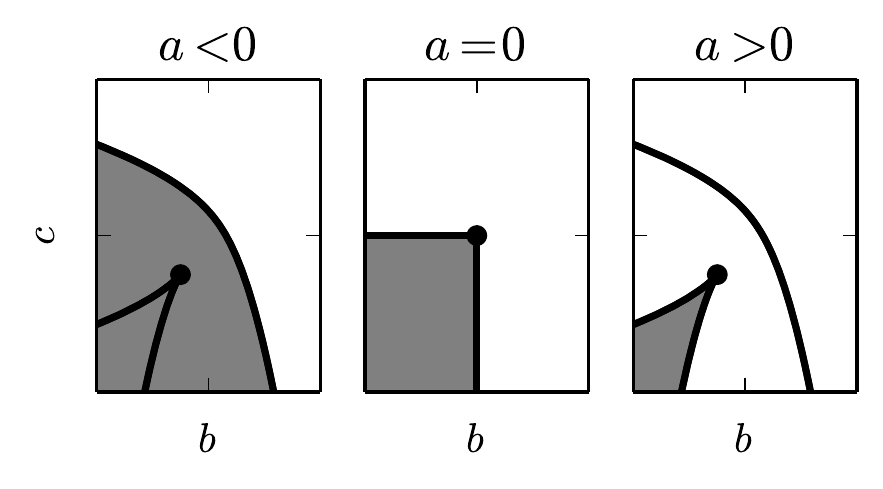}}
             };
             \node at (f1.south)[xshift=0cm, yshift=-0.5cm]{
             \begin{tikzpicture}
                 \draw (0,0) edge[->] node[right] {$\pi$} (0,-1);
             \end{tikzpicture}
             \begin{tikzpicture}[overlay]
                 \node at (-3.5,5) {$B_1$};
                 \node at (-3,2.5) {$B_2$};
                 \node at (-3.0,-1.5) {$\pi(B_1)$};
                 \node at (-4.65,-3.25) {$\pi(B_2)$};
             \end{tikzpicture}
             };
         \end{tikzpicture}
         \caption{ From left to right we show a tomography of the \ensuremath{3}-dimensional manifold \ensuremath{\sv} for different values of \ensuremath{a} and parametrized by different coordinates. Compare with figure \protect\ref{fig:hu-stratification}. The shaded region represents the stable part of \ensuremath{\sv}, that is \ensuremath{\svm}. For reference purposes, the singularity set \ensuremath{B} is divided into two components \ensuremath{B_1} and \ensuremath{B_2}. In each figure the thick curve represents the \ensuremath{2}-dimentional set of folds. For \ensuremath{a\neq 0} the dots stand for the \ensuremath{1}-dimensional set of cusps. For \ensuremath{a=0} the dot represents the central singularity, the hyperbolic umbilic point, which correspond to the intersection of the cusp lines. Recall that \ensuremath{\pi} is a projection from the total space to the parameter space, and occurs along the two dimensional fast foliation.  }
         \label{fig:geo_hu1}
     \end{figure}
    
     Now, recall that the desingularized vector field reads
     \begin{small}
         \eqn{
         	\oX=(36xy-a^2)f_a \parc{a}+\left( \left(-6y^2+ax\right)f_a-6y f_b + a f_c \right)\parc{x}+\left( \left(-6x^2+ay\right)f_a+a f_b - 6x f_c \right)\parc{y}.
         }   
     \end{small}
    
     The vector field $\oX$ has generically an equilibrium point at the origin. It can also be shown that such point is isolated within a sufficiently small neighborhood of the origin. Therefore, in contrast with the swallowtail case, we do not expect that a generic vector field $\oX$ has the form of a flow-box. Note however, from the linearization of $\oX$ around the origin, that the hyperbolic eigenspace is two dimensional and the center eigenspace is one dimensional (see section \ref{hu} for details). So, we expect to have a 1-dimensional center manifold and two hyperbolic invariant manifolds intersecting at the origin. Such manifolds arrange the whole dynamics in a small neighborhood of the central singularity, the hyperbolic umbilic point. We expect that $\oX$ meets transversally the set $\pi(B)$. 
    
     The transversality of $X$ to $\pi(B)$ means that $\oX$ is also transversal to $B$. Such transversality property is depicted in figure \ref{fig:geo_hu2}.
    
     \begin{figure}[htbp]\centering
         \begin{tikzpicture}[remember picture, scale=0.9, every node/.style={scale=0.9}]
             \node(f1) at (0,0){
                 \put(-145,0){\includegraphics{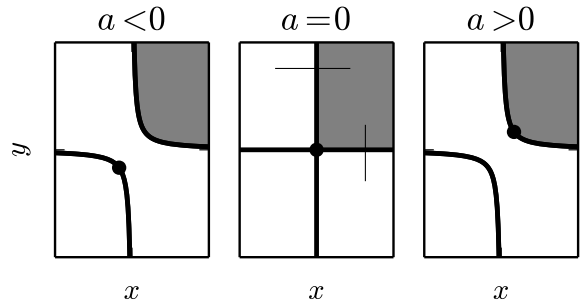}}
             };
         \end{tikzpicture}
         \begin{tikzpicture}[overlay]
             \draw (-1,3.5) edge[-]  (0,3.5);
             \draw (0.2,2) edge[-]  (0.2,2.75);
         \end{tikzpicture}
         \caption{ The transversality property of \ensuremath{\oX} with respect to \ensuremath{B} means that the integral curves of \ensuremath{\oX} are tangent to the thin lines depicted. Recall that if \ensuremath{\oX} is transversal to \ensuremath{B|(a=0)} (center picture), then \ensuremath{\oX} is also transversal to a small perturbation of \ensuremath{B|(a=0)} (left and right pictures). }
         \label{fig:geo_hu2}
     \end{figure}
    
     It is worth to take a closer look to figure \ref{fig:geo_hu1}, specially to the case $a<0$. Observe in the parameter space $(a,b,c)$ that within the shaded region $\svm$, there appear to be a set of singularities $\pi(B_2)$. However this is only a visual effect due to the projection map $\pi$. We can note from the the same picture in the space $(x,y,a)$, that the trajectories in $\svm$ cannot meet the set $B_2$.\\
      
     The jumping behavior is now more complicated. Mainly because a jump may occur along a plane parallel to the $(x,y,0,0,0)$ space. However, two important facts can be seen from figure \ref{fig:geo_hu1}. First, the set $\svm$ is one connected component. Second, as explained in the previous paragraph, we can see that there is no superposition (along the fibers) of points in $\svm$ and points in $B$ (compare with the diagram of the swallowtail given in figure \ref{fig:geo_st}). This means that along the projection $\pi$ it is not possible to join a point in $B$ with a point in $\svm$. These facts lead us to conjecture that there are not jumps for generic CDEs with a hyperbolic umbilic singularity. Such idea is proved in section \ref{sec:jumps}
	
    \subsubsection{The Elliptic Umbilic} \label{geo:eu}
	    Now we provide some insight on the geometry of the elliptic umbilic catastrophe, which is given by
	    \eqn{
	    V(x,y,a,b,c)=x^3-3xy^2+a(x^2+y^2)+bx+cy.
	    }
	    As in the hyperbolic umbilic case, the fast fibration is now two dimensional. The constraint manifold, the set of critical points of $V$ reads
	    \eqn{
	    	\sv = \left\{ (x,y,a,b,c)\in\R^5 \; | \; b=-3x^2-3y^2-2ax, \; c=-6xy-2ay \right\}.
	    }

	    As before, within $\sv$ there is a set of attracting points and a set of singular points. Each of such sets are given as
	    \eqn{
	    	\svm = \left\{ (x,y,a,b,c)\in\sv \; | \; \det\begin{bmatrix}
	    		6x+2a & 6y\\
	    		6y & 6x+2a
	            \end{bmatrix} \geq 0\right\},
	    }
	    which is equivalent to the condition $36x^2+36y^2-4a^2\geq 0$ and $a>0$. The set of singular points is given by
	    \eqn{
	    	B = \left\{ (x,y,a,b,c)\in\sv \; | \; 36x^2+36y^2-4a^2=0 \right\}.
	    }
    
	    We show in figure \ref{fig:geo_eu1} some tomographies of the constraint manifold $\sv$ as well as sections of the singular set $B$.\\
    
	    \begin{figure}[htbp]\centering
	        \begin{tikzpicture}[remember picture, scale=0.9, every node/.style={scale=0.9}]
	            \node(f1) at (0,0){
	                \put(-145,0){\includegraphics{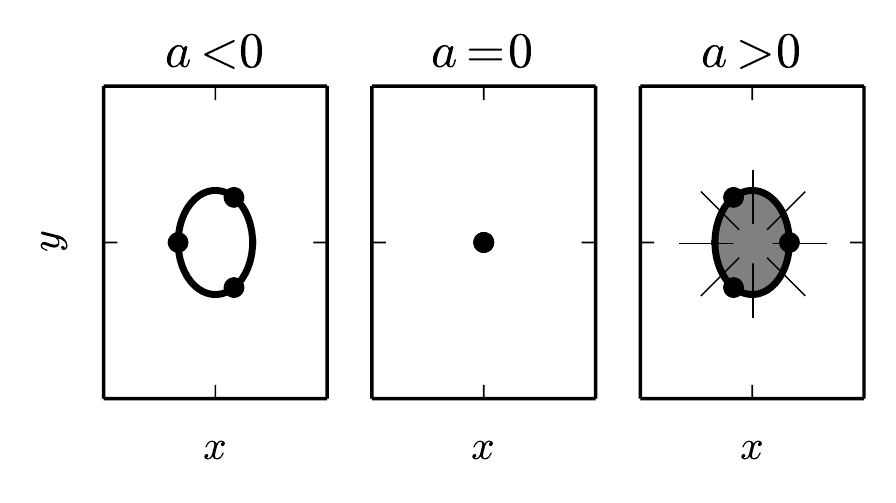}}
	            };
	            \node(f2) at (f1.south)[yshift=-3.5cm]{
	                \put(-135,0){\includegraphics{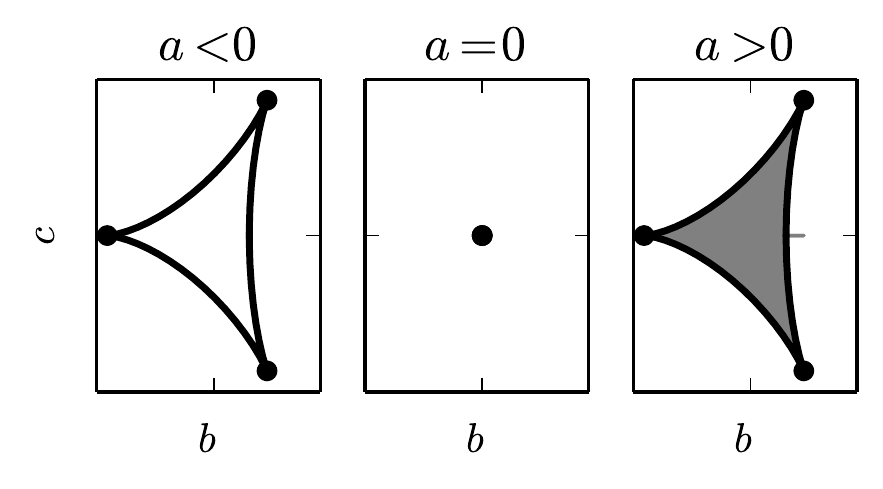}}
	            };
	            \node at (f1.south)[xshift=0cm, yshift=-0.5cm]{
	            \begin{tikzpicture}
	                \draw (0,0) edge[->] node[right] {$\pi$} (0,-1);
	            \end{tikzpicture}
	            };
	        \end{tikzpicture}
	        \caption{ From left to right we show a tomography of the \ensuremath{3}-dimensional manifold \ensuremath{\sv} for different values of \ensuremath{a} and parametrized by different coordinates. Compare with figure \protect\ref{fig:eu-stratification}. The shaded region represents the stable part of \ensuremath{\sv}, that is \ensuremath{\svm}. In each figure the thick curve represents the \ensuremath{2}-dimentional set of folds. For \ensuremath{a\neq 0} the dots stand for the \ensuremath{1}-dimensional set of cusps. For \ensuremath{a=0} the dot represents the central singularity, the hyperbolic umbilic point, which correspond to the intersection of the cusp lines. Recall that \ensuremath{\pi} is a projection from the total space to the parameter space.   }
	        \label{fig:geo_eu1}
	    \end{figure}
    
	    The desingularized vector field in this case reads
    
	    \eqn{
	    	 \oX = &(4a^2-36x^2-36y^2)f_a\parc{a} + \left((12x^2-4ax-12y^2)f_a+(6x-2a)f_b-6yf_c\right)\parc{x} + \\
	    	 		&(-4y(a+6x)f_a-6yf_b-(2a+6x)f_c)\parc{y},  
	    	}
	        and as in the Hyperbolic Umbilic case, there is generically an equilibrium point at the origin. Similar arguments as before then apply. Namely, we expect that the vector field has a 1-dimensional center manifold and two hyperbolic invariant manifold intersecting at the origin. A qualitative picture of the transversality of $\oX$ with respect to $B$ is shown in figure \ref{fig:geo_eu2} 
        
	        \begin{figure}[htbp]\centering
				\includegraphics{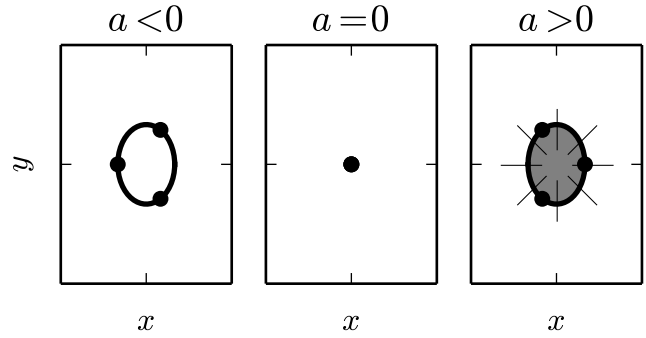}
	            \caption{ The transversality property of \ensuremath{\oX} with respect to \ensuremath{B} means that the integral curves of \ensuremath{\oX} are tangent to the thin lines depicted in the right picture.  }
	            \label{fig:geo_eu2}
	        \end{figure}
        
	        Regarding the jumps, the same arguments as for the hyperbolic umbilic catastrophe apply. Observe from figure \ref{fig:geo_eu1} that it is not possible to join points in $B$ with points in $\svm$ along the fibers.
	
    \newpage
    \subsection{Main theorem}\label{sec:main_theorem}
	    In this section we provide a list of generic CDEs with three parameters. In contrast with Takens's list of normal forms \cite{Takens1}, the result in this sections includes CDEs with two dimensional fast fibers. As it was mentioned in section \ref{sec:cde} folds and cusps (lower codimension singularities) also appear as generic singularities of CDEs with three parameters. However the qualitative behavior in the neighborhood the solutions near folds and cusps can be understood from Takens's list \cite{Takens1}. The novelty of theorem \ref{teo:main} is the description of the solutions of CDEs in a neighborhood of a swallowtail, hyperbolic, and elliptic umbilic singularity.

	\begin{theorem}\label{teo:main} Let $(V,X)$ be a generic constrained differential equation with three parameters. Then $(V,X)$ is topologically equivalent to one of the following $16$ polynomial local normal forms.
	\end{theorem}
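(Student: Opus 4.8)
The plan is to reduce the classification of three-parameter generic CDEs to a finite analysis of desingularized vector fields $\oX$, organized according to the potential function $V$. Since topologically equivalent CDEs have diffeomorphic critical sets and (by the remark following Lemma~\ref{lemma:des}) the problem reduces to comparing desingularized vector fields for a \emph{fixed} representative $V$, I would treat the four relevant catastrophes of codimension $\leq 3$ separately: fold, cusp, swallowtail, and the two umbilics. The fold and cusp cases already appear in Takens's list (Theorem~\ref{teo:Takens}), so the genuinely new work lies in the swallowtail, hyperbolic umbilic, and elliptic umbilic, whose desingularized fields are given explicitly in Corollary~\ref{cor:cod3}.

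For the swallowtail, the geometric discussion in Section~\ref{geo:st} shows that generically $\oX(0)=f_c(0)\parc{x}\neq 0$, so $\oX$ is a flow-box near the central singularity and the only invariant is the direction of the flow relative to $\Delta$; this yields a short finite list of normal forms distinguished by orientation. For the two umbilics, $\oX$ has an isolated equilibrium at the origin whose linearization has a two-dimensional hyperbolic part and a one-dimensional center part. Here the plan is: first compute the linearization $D\oX(0)$ in terms of the free coefficients $f_a(0),f_b(0),f_c(0)$ and their first-order data; second, apply a center manifold reduction to split off the one-dimensional center direction; third, classify the resulting planar hyperbolic phase portraits up to topological equivalence, using the standard fact (Remark~\ref{remark_cde}, Figure~\ref{fig:2dvfs}) that nodes and foci of the same stability type are topologically indistinguishable, so only the signs of the eigenvalues (saddle versus source/sink) survive as invariants. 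Genericity of $X$ (transversality to $\Delta$, equivalently to $\pi(B)$) rules out degenerate eigenvalue configurations and fixes which cases actually occur.

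The main obstacle I anticipate is controlling the jumps and the global gluing of the local pictures. The phase space is the manifold $\svm$, not all of $\R^n\times\R^m$, and solutions obey the jump rule S3: when a trajectory reaches $B$ it may jump along a fast fiber to another attracting branch. For the swallowtail (one-dimensional fibers) jumps genuinely occur when $a<0$, and I would enumerate the admissible landing points using the projections of $B$ into $\sv$ shown in Figure~\ref{fig:geo_st3}. For the umbilics (two-dimensional fibers) the geometric analysis strongly suggests that no jumps occur, since $\svm$ is connected and no point of $B$ lies over a point of $\svm$ along a fiber; I would make this precise (as promised in Section~\ref{sec:jumps}) by checking that the fiber through any point of $B$ meets $\svm$ only trivially. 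Combining the interior flow classification with the jump behavior, and then verifying that the candidate polynomial representatives are pairwise inequivalent and exhaust all generic cases, produces exactly the sixteen normal forms: the count being the tally over fold, cusp, swallowtail, hyperbolic umbilic, and elliptic umbilic of their respective orientation- and stability-distinguished representatives.
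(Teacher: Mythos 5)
Your overall skeleton---desingularize via Corollary \ref{cor:cod3}, reduce to a center manifold, classify, then settle the jumps---is the paper's strategy, but there are two genuine gaps in the umbilic cases, which are the heart of the theorem. First, your claim that the linearization of $\oX$ at the origin has a two-dimensional hyperbolic part and a one-dimensional center part is not always true. For the hyperbolic umbilic the spectrum is $\left\{0,\,+6\sqrt{f_b(0)f_c(0)},\,-6\sqrt{f_b(0)f_c(0)}\right\}$, so when $f_b(0)f_c(0)<0$ the nonzero eigenvalues are purely imaginary and the center manifold is \emph{three}-dimensional. This is exactly the ``Center'' entry of the hyperbolic umbilic in the list of sixteen normal forms; your plan, which only distinguishes ``saddle versus source/sink'' among hyperbolic planar portraits, would miss it entirely. (Conversely, one must also verify that for the elliptic umbilic only the real-eigenvalue, center-saddle configuration occurs generically --- the asymmetry between the two umbilics has to be computed, not assumed.)

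Second, and more fundamentally, the theorem asserts \emph{polynomial} normal forms, and the dynamics along the center direction(s) are governed entirely by nonlinear terms, since the relevant eigenvalues vanish. Reducing to a center manifold and reading off eigenvalue signs does not produce, for instance, the $(\pm u^2+\delta u^3)\parc{u}$ term on the one-dimensional center manifold in the center-saddle case, nor the $\rho_{\ell j},\eta_{\ell j},\sigma_{\ell j}$ series in the three-dimensional center case. For that you must invoke Takens's normal form theorem (theorem \ref{teo:nf}): compute the homological operator $[Y_1,-]_k$ on $\mathcal H^k$, exhibit a complement of its image (in the center case this is done in complex coordinates $z=u+\imath w$), and argue that the remaining jet can be truncated up to topological equivalence. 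Nothing in your proposal supplies this step, so it cannot justify that finitely many polynomial models exhaust the generic cases. A smaller omission: after normalizing $\oX$ you must still undo the desingularization --- the linear identification of the $(u,v,w)$ coordinates with $(a,x,y)$, the reparametrization $b=-3x^2-ay$, $c=-3y^2+ax$, and the sign of $\det(D\tilde\pi)$ on $\svm$ --- to obtain the stated expressions for $X$ itself; you gesture at this via the remark after Lemma \ref{lemma:des}, but it is the step that actually produces the table entries.
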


	\renewcommand{\arraystretch}{2}
	\begin{small}
	\begin{tabular}[!htbp]{|l|l|r|}
	    	\hline
	    	\multicolumn{3}{|c|}{Regular}\\
	    	\hline
	    	\multicolumn{1}{|c|}{$V(x,a,b,c)$} &  \multicolumn{1}{c|}{$X(x,a,b,c)$} & \multicolumn{1}{c|} {Type}\\ [0ex]\hline
	    	\multirow{5}{*}{\footnotesize $\dfrac{1}{2}x^2$}	& \footnotesize $\parc{a}$ & Flow-box\\[1ex]
	        \cline{2-3}
	    										& \footnotesize $a\parc{a}+b\parc{b}+c\parc{c} $ & Source\\[1ex]
	                                            \cline{2-3}
	    										& \footnotesize $a\parc{a}+b\parc{b}-c\parc{c}$ & Saddle-1\\[1ex]
	                                            \cline{2-3}
	    										& \footnotesize $a\parc{a}-b\parc{b}-c\parc{c}$ & Saddle-2\\[1ex]
	                                            \cline{2-3}
	    										& \footnotesize $-a\parc{a}-b\parc{b}-c\parc{c}$ & Sink\\\hline
	\end{tabular}
	\end{small}
    
	    \vspace*{0.5cm}
    
	\begin{small}
	        \begin{tabular}[t]{|l|l|r|}
	        	\hline
	        	\multicolumn{3}{|c|}{\emph{Fold}}\\
	        	\hline
	        	\multicolumn{1}{|c|}{$V(x,a,b,c)$} &  \multicolumn{1}{c|}{$X(x,a,b,c); \quad (\rho=\pm 1, \, \delta\in\R)$}  & \multicolumn{1}{c|} {Type} \\
	        	\hline
	        	\multirow{5}{*}{\footnotesize $\dfrac{1}{3}x^3+ax$} & \footnotesize $\parc{a}$ & Flow-box-1 \\[1ex]\cline{2-3}
	        										  & \footnotesize $-\parc{a}$ & Flow-box-2 \\[1ex]\cline{2-3}
	        										  & \footnotesize $\left( 3x+\frac{1}{2}b+\frac{1}{2}c \right)\parc{a}+(c-b)^2 \left( \rho +\delta(c-b) \right) \left(-\parc{b}+\parc{c}\right)+\frac{1}{2}\left( \parc{b}+\parc{c}\right) $ & Source\\[1ex]\cline{2-3}
	        										& \footnotesize $\left( -3x+\frac{1}{2}b+\frac{1}{2}c \right)\parc{a}+(c-b)^2 \left( \rho +\delta(c-b) \right) \left(-\parc{b}+\parc{c}\right)+\frac{1}{2}\left( \parc{b}+\parc{c}\right)$ & Sink\\[1ex]\cline{2-3}
	        										& \footnotesize $-\left(\frac{1}{2}b+\frac{1}{2}c \right)\parc{a}+(c-b)^2 \left( \rho +\delta(c-b) \right) \left(-\parc{b}+\parc{c}\right)+\frac{1}{2}\left( \parc{b}+\parc{c}\right)$ & Saddle	\\\hline
	        \end{tabular}
	        \begin{remark}
	            If $b=c$, these fold normal forms reduce to those of theorem \ref{teo:takens_cde}. 
	        \end{remark}
            
	\clearpage

	\begin{tabular}[t]{|l|l|r|}
		\hline
		\multicolumn{3}{|c|}{\emph{Cusp}}\\
		\hline
		\multicolumn{1}{|c|}{$V(x,a,b,c)$} &  \multicolumn{1}{c|}{$X(x,a,b,c)$} & \multicolumn{1}{c|} {Type}\\
		\hline
		\footnotesize $\dfrac{1}{4}x^4+ax^2+bx$ & \footnotesize $\parc{b}$ & Flow-box \\[2ex]\hline
		\footnotesize $-\left(\dfrac{1}{4}x^4+ax^2+bx\right)$ & \footnotesize $\parc{b}$ & (Dual) Flow-box\\\hline
	\end{tabular}
	\begin{tabular}[t]{|l|l|r|}
		\hline
		\multicolumn{3}{|c|}{\emph{Swallowtail}}\\\hline
		\multicolumn{1}{|c|}{$V(x,a,b,c)$} &  \multicolumn{1}{c|}{$X(x,a,b,c)$} & \multicolumn{1}{c|} {Type}\\[0ex]\hline
		\footnotesize $\dfrac{1}{5}x^5+\dfrac{1}{3}ax^3+\dfrac{1}{2}bx^2+cx$ & \footnotesize $\parc{c}$ & Flow-box\\\hline
	\end{tabular}

	\vspace*{0.5cm}

	        \begin{tabular}{|l|l|r|}
	        	\hline
	        	\multicolumn{3}{|c|}{\emph{Hyperbolic Umbilic}}\\
	        	\hline
	        	\multicolumn{1}{|c|}{$V(x,y,a,b,c)$} &  \multicolumn{1}{c|}{$X(x,y,a,b,c)$} & \multicolumn{1}{c|}{Type}\\
	            \hline
	            \multirow{4}{*}{\footnotesize $x^3+y^3+axy+bx+cy$} & \footnotesize $6\Phi(a)\parc{a}-\left(\Phi(a)(6x+6y-a)-6xy+\frac{a^2}{6}\right)\left( \parc{b}+\parc{c} \right)$ & Center-Saddle\\ [2ex]\cline{2-3}
	            & \footnotesize $6\sum_{\ell=2}^k\sum_{j=0}^{2j=\ell}\rho_{\ell,j}A_{\ell,j} \parc{a}+\left( \frac{a^2}{6}-6xy\right)\parc{b}+\left( -\frac{a^2}{6}-6xy\right)\parc{c}+$ & Center\\[2ex]
	                        & \footnotesize $\sum_{\ell=2}^k\left( (6x+a-6y)\sum_{j=0}^{2j=\ell}\rho_{\ell,j}A_{\ell,j}+\sum_{j=0}^{2j+1=\ell}\left(\frac{a}{6}\right)^{-1}A_{\ell,j}B_{\ell,j} \right)\parc{b}+$ & \\[2ex]
	                        &\footnotesize $\sum_{\ell=2}^k\left( (6y+a-6x)\sum_{j=0}^{2j=\ell}\rho_{\ell,j}A_{\ell,j}+\sum_{j=0}^{2j+1=\ell}\left(\frac{a}{6}\right)^{-1}A_{\ell,j}\overline B_{\ell,j} \right)\parc{c}$ & \\\hline
	        \end{tabular}

	\vspace*{0.25cm}
	Where 
	\begin{footnotesize}
	    \eqn{
	    \Phi(a) &= \pm\frac{a^2}{36}+\frac{\delta a^3}{216}, \qquad \delta\in\R\\
	    A_{\ell,j} &=\left(\frac{a}{6}\right)^{\ell-j}\Delta^j,  \qquad \Delta=\left(\frac{a}{108}\right)\left(a^2+18(x^2+y^2) +6(ax+ay) \right)\\
	    B_{\ell,j}&=-6xC_{\ell,j}-a\overline C_{\ell,j} \\
	    \overline B_{\ell,j}&=-aC_{\ell,j}-6y\overline C_{\ell,j} \\
	    C_{\ell,j}&=\eta_{\ell,j}\left( \frac{a}{6}+x \right)+\sigma_{\ell,j}\left( \frac{a}{6}+y \right)  \\
	    \overline C_{\ell,j} &= \eta_{\ell,j}\left( \frac{a}{6}+y \right)-\sigma_{\ell,j}\left( \frac{a}{6}+x \right),
	    }    
	\end{footnotesize}

	with $\rho_{\ell,j},\eta_{\ell,j},\sigma_{\ell,j}\in\R$.

	        \vspace*{0.25cm}
	        \begin{tabular}{|l|l|r|}
	        	\hline
	        	\multicolumn{3}{|c|}{ \emph{Elliptic Umbilic}}\\
	        	\hline
	        	\multicolumn{1}{|c|}{$V(x,y,a,b,c)$} &  \multicolumn{1}{c|}{$X(x,y,a,b,c)$} & \multicolumn{1}{c|}{Type}\\
	        	\hline
	        	\multirow{1}{*}{\footnotesize $x^3-3xy^2+a(x^2+y^2)+bx+cy$} & \footnotesize $A\parc{a}+\frac{B}{\sqrt{2}}\left( \parc{b}+\parc{c} \right) -\frac{1}{\sqrt{2}}\left(2xA\parc{b}+2yA\parc{c} \right)$ & Center-Saddle\\\hline
	        \end{tabular}\\

	        Where $A =  \frac{1}{9}\left(\pm 3a^2 +\delta a^3\right), \, \delta\in\R$, and $B = -6x^2-6y^2+\frac{2}{3}a^2$.\\

	\end{small}

	We show in section \ref{sec:pics} some phase portraits of the CDEs of theorem \ref{teo:main}. Recall remark \ref{remmark_cats} for the relationship between the list of normal forms and figures  \ref{fig:st-stratification}, \ref{fig:hu-stratification} and \ref{fig:eu-stratification}.
	
    \newpage
    \subsection{Proof of the main result} \label{sec:proof}
	\renewcommand{\arraystretch}{1}
    In this section we prove theorem \ref{teo:main}. We only detail the hyperbolic umbilic case as it is the most interesting one. All the other cases follow exactly the same lines. The procedure is summarized as follows.
    \begin{enumerate}
      \item Desingularization of $(V,X)$. With this we obtain the desingularized vector field $\oX$. Then we are able to use standard techniques of dynamical systems theory to obtain a polynomial normal form of $\oX$ following the next two steps.
      \item Reduction to a center manifold, see appendix \ref{app:cm}. This reduction greatly simplifies the expressions of the normal forms. 
      \item Apply Takens's normal form theorem, see appendix \ref{app:nf}. 
      \item At this stage, we have a polynomial local normal form of the vector field $\oX$. Now, recall that the form of $\oX$ is obtained by following the desingularization process described in section \ref{sec:desingularization}. So, the last step in order to write the local normal forms of a constrained differential equation $(V,X)$ is to carry out the inverse coordinate transformation performed when obtaining $\oX$.
    \end{enumerate}

    \subsubsection*{The Hyperbolic Umbilic}\label{hu}
	Following table \ref{cats}, we deal with the constrained differential equation
	\eq{ \label{eq:hu-cde}
		V(x,y,a,b,c) &= x^3+y^3+axy+bx+cy\\
		X(x,y,a,b,c) &= f_a\parc{a}+f_b\parc{b}+f_c\parc{c},
	}

	The functions $f_i(x,y,a,b,c):\R^5\to\R$, for $i=a,b,c$, are considered to be $\C^\infty$ with the generic condition $f_i(0)\neq 0$ . The constraint manifold is the critical set of the potential function $V$
	\eq{\label{eq:hu-sv}
		\sv = \left\{ (x,y,a,b,c)\in\R^5 \; | \; b=-3x^2-ay, \; c=-3y^2+ax \right\}.
	}

	The attracting region of $S_V$ is
	\eq{\label{eq:hu-svm}
		\svm = \left\{ (x,y,a,b,c)\in\sv \; | \;  \begin{bmatrix}
			6x & a\\
			a & 6y
	        \end{bmatrix} \geq 0\right\},
	}

	which is equivalent to the conditions $36xy-a^2\geq 0$ and $x+y\geq 0$. Consequently, the catastrophe set reads
	\eq{\label{eq:hu-bif}
		B = \left\{ (x,y,a,b,c)\in\sv \; | \; \det\begin{bmatrix}
			6x & a\\
			a & 6y
		\end{bmatrix} =0\right\}.
	}

	Refer to figure \ref{fig:hu-stratification} for the pictures of $\sv$ and $B$. Following the desingularization process, we choose coordinates in $S_V$. The projection into the parameter space restricted to $S_V$ is
	\eq{\tilde \pi=(a,-3x^2-ay,-3y^2-ax).}

	Observe that $\det(D\tilde\pi)\geq 0$ for points in $\svm$. By following corollary \ref{cor:cod3}, the corresponding desingularized vector field is
	\begin{small}
	    \eq{\label{eq:hu-des}
	    	\oX=(36xy-a^2)f_a \parc{a}+\left( \left(-6y^2+ax\right)f_a-6y f_b + a f_c \right)\parc{x}+\left( \left(-6x^2+ay\right)f_a+a f_b - 6x f_c \right)\parc{y}.
	    }   
	\end{small}

	The vector field $\oX$ has an equilibrium point at the origin. The corresponding linearization shows the spectrum $\left\{ 0,+6\sqrt{f_b(0)f_c(0)},-6\sqrt{f_b(0)f_c(0)} \right\}$. Considering the generic conditions on $f_b$ and $f_c$, and by referring to the center manifold theorem \ref{teo:cm}, we study the cases where $\oX$ is topologically equivalent to

	\begin{enumerate}
	    \item $\oX' (u,v,w) = f_u(u)\parc{u}+v \parc{v}- w \parc{w}, \qquad \text{or}$
	    \item $\oX' (u,v,w) = f_u(u,v,w)\parc{u} + \left(v+f_w(u,v,w)\right) \parc{w}+\left(-w+f_v(u,v,w) \right)\parc{v}$,
	\end{enumerate}

	where $f_i(0)=Df_i(0)=0$ for $i=u,v,w$. We study each case separately.

	\begin{enumerate}
	    \item Here we consider that the spectrum of $\oX$ is of the form $\left\{ 0,\lambda_1,\lambda_2 \right\}$, $\lambda_1>0>\lambda_2$, so we call it \emph{the center-saddle case}. There exists a 1-dimensional center manifold passing through the origin. Following theorem \ref{teo:nf} and noting that

	\eq{
		\left[u^2\parc{u}, u^{k-1}\parc{u}  \right]=(k-3)u^k,
	}
	we have that the $k-$jet of $\oX'$ is smoothly equivalent to 

	\eq{
	\left(\delta_1u^2+\delta_2u^3\right)\parc{u}+v\parc{v}-w\parc{w}
	} 

	for all $k\geq 3$, where $\delta_1\in\R\backslash\left\{ 0\right\}$, and $\delta_2\in\R$. With this we can further say that $\oX$ is topologically equivalent to

	\eq{\label{eq:c-s}
		\oX'=\left(\pm u^2+\delta u^3\right)\parc{u}+v\parc{v}- w \parc{w}, \qquad \delta\in\R.
	}
	Observe that $u$ is the center direction and $v,w$ are the hyperbolic (saddle) directions. Locally, the direction of the center manifold depends on the $\pm$ sign in front of the $u^2$ term of the normal form \eqref{eq:c-s}.
	\item Now we deal with a $3$-dimensional center manifold. The vector field $\oX'$ has spectrum $\lbrace 0,\lambda\imath,-\lambda\imath \rbrace, \; \lambda\in\R$, so we call it \emph{the center case}. It is convenient to introduce complex coordinates 
	\eq{
		z&=u+\imath w,\\
		\oz&=u-\imath w.
	}
	In these coordinates we have that the $1-jet$ of $\oX'$ is
	\eq{
		\oX'_1(u,z,\oz)=\imath\left( z\parc{z}-\oz\parc{\oz} \right).
	}

	Following the normal form theorem \ref{teo:nf}, we write the elements of $\mathcal H^k\otimes\mathbb{C}$ as a combination of the monomials $u^{m_1}z^{m_2}\oz^{m_3}$, where $m_1+m_2+m_3=k$, having the relations

	\eq{
		\left[ \oX'_1,\; u^{m_1}z^{m_2}\oz^{m_3}\parc{u}\right]&=\imath u^{m_1}z^{m_2}\oz^{m_3}(m_2-m_3)\parc{u},\\
		\left[ \oX'_1,\; u^{m_1}z^{m_2}\oz^{m_3}\parc{z}\right]&=\imath u^{m_1}z^{m_2}\oz^{m_3}(m_2-m_3-1)\parc{z},\\
		\left[ \oX'_1,\; u^{m_1}z^{m_2}\oz^{m_3}\parc{\oz}\right]&=\imath u^{m_1}z^{m_2}\oz^{m_3}(m_2-m_3+1)\parc{\oz}.
	}
	We can choose as a complement of the image of $\left[\oX'_1,- \right]_k$ the space spanned by

	\begin{small}
	    \eq{
	    	&\left\{u^{k-2m}z^m\oz^m\parc{u}\right\}_{m=0}^{m=k/2}\bigcup \\
	    	&\left\{ u^{k-1-2m}z^m\oz^m\left( z\parc{z}+\oz\parc{\oz}\right), \;\imath u^{k-1-2m}z^m\oz^m\left( z\parc{z}-\oz\parc{\oz}\right)  \right\}_{m=0}^{m=\frac{k-1}{2}}.
	    }
	\end{small}    

	This base is chosen so that we can easily write the normal form in the original coordinates by identifying $\left( z\parc{z}+\oz\parc{\oz}\right), \text{ and }\imath\left( z\parc{z}-\oz\parc{\oz}\right)$ with $\left(v\parc{v}+w\parc{w}\right), \text{ and }\left(v\parc{w}-w\parc{v}\right) $ respectively. Then, we have that the $k-$th order polynomial normal form of $\oX'$ reads

	\begin{footnotesize}
	    \eq{\label{eq:c}
	    	\oX'=\oX'_1+&\sum_{\ell=2}^k\left(\sum_{j=0}^{2j=\ell}\rho_{\ell j}u^{\ell-2j}(v^2+w^2)^j\parc{u}  +\right.\\
	    	&\left.\sum_{j=0}^{2j+1=l}u^{\ell-1-2j}(v^2+w^2)^j \left( \eta_{\ell j}\left( v\parc{v}+w\parc{w}\right)+\sigma_{\ell j}\left( v\parc{w}-w\parc{v}\right)\right)  \right)  ,
	    }    
	\end{footnotesize}

	where $\rho_{\ell j}, \eta_{\ell j}$, and $\sigma_{\ell j}$ are some nonzero constants. Compare with \cite{Takens2}, where the case of a vector field having eigenvalues of its Jacobian equal to $\left\{\alpha,\pm \imath \right\}, \; \alpha\neq 0$ is studied. 
	\end{enumerate}

	At this point then, we have two normal forms of the vector field $\oX'$ depending on the eigenvalues of $D_0\oX$. Recall that the solutions of $(V,X)$ are related to the integral curves of $\oX$ and therefore also to the integral curves of $\oX'$. In order to locally identify the coordinates in which we expressed $\oX'$ with the original coordinates $(x,y,a,b,c)$, we perform a linear change of coordinates such that $D_0\oX=D_0\oX'$. This linear transformation is given by
	\eq{
		\begin{bmatrix}
			a\\
			x\\
			y
		\end{bmatrix}=\begin{bmatrix}
			6 & 0 & 0\\
			1 & -1 & 1\\
			1 & 1 & 1
		\end{bmatrix}\begin{bmatrix}
			u\\
			v\\
			w
		\end{bmatrix}
	}

	in the case of the center-saddle vector field \eqref{eq:c-s}, and
	\eq{
		\begin{bmatrix}
			a\\
			x\\
			y
		\end{bmatrix}=\begin{bmatrix}
			6 & 0 & 0\\
			-1 & 0 & 1\\
			-1 & 1 & 0
		\end{bmatrix}\begin{bmatrix}
			u\\
			v\\
			w
		\end{bmatrix}
	}

	in the case of the vector field \eqref{eq:c}. By carrying out the computations, $\oX$ has respectively the $k-$th order local normal form

	\begin{enumerate}
	    \item Center-saddle case
	    \begin{small}
	        \begin{flalign}\label{eq:hu1}
	            \oX=\left( \pm a^2 +\delta a^3\right)\parc{a}+\dfrac{1}{6}\left(\left( \pm a^2 +\delta a^3\right)+a-6y\right)\parc{x}+\dfrac{1}{6}\left( \left( \pm a^2 +\delta a^3\right)+a-6x \right)\parc{y},&&
	        \end{flalign}
	    \end{small}
	    where $\delta\in\R$.
	    \item Center case
	    \begin{small}
	        \begin{flalign}\label{eq:hu2}
	            \begin{array}{@{\hspace{0mm}}r@{\;}l@{\hspace{0mm}}}
	                \oX = &\left(\frac{1}{6}a+y\right)\parc{x}-\left(\frac{1}{6}a+x\right)\parc{y}+6\sum_{\ell=2}^k\sum_{j=0}^{2j=\ell}\rho_{\ell j}\left( \frac{a}{6}\right)^{\ell-j}\Delta^j \parc{a}+\\
	                &\left( -\sum_{\ell=2}^k\sum_{j=0}^{2j=\ell}\rho_{\ell j}\left( \frac{a}{6}\right)^{\ell-j}\Delta^j+\sum_{\ell=2}^k\sum_{j=0}^{2j+1=\ell}\left( \frac{a}{6}\right)^{\ell-1-j}\Delta^jA_{\ell,j} \right)\parc{x}+\\
	                &\left( -\sum_{\ell=2}^k\sum_{j=0}^{2j=\ell}\rho_{\ell j}\left( \frac{a}{6}\right)^{\ell-j}\Delta^j+\sum_{\ell=2}^k\sum_{j=0}^{2j+1=\ell}\left( \frac{a}{6}\right)^{\ell-1-j}\Delta^j\overline A_{\ell,j} \right)\parc{y}
	            \end{array}&&
	        \end{flalign}
	    \end{small}
	where 
	\eqn{
	\Delta & =\frac{a}{108}\left( a^2 + 6ax + 6ay + 18x^2 + 18y^2 \right)\\
	A_{\ell,j} & =\eta_{\ell,j}\left( \frac{a}{6}+x \right)+\sigma_{\ell,j}\left( \frac{a}{6}+y \right)\\
	\overline A_{\ell,j} & =\eta_{\ell,j}\left( \frac{a}{6}+y \right)-\sigma_{\ell,j}\left( \frac{a}{6}+x \right), \qquad \eta_{\ell,j},\sigma_{\ell,j}\in\R\\
	}
	\end{enumerate}

	The phase portraits of \eqref{eq:hu1} and \eqref{eq:hu2} are shown in figures \ref{fig:hu-cs} and \ref{fig:hu-cc} respectively.\\

	Finally, by following lemma \ref{lemma:des} we can obtain the form of $(V,X)$. Recall that the desingularized vector field is defined by $\oX=\det(D\tilde\pi)(D\tilde\pi)^{-1}X$. This means that in principle, once we know $\oX$, $X$ is obtained as $X=\frac{1}{\det(D\tilde\pi)}D\tilde\pi\oX$. Clearly, the map $X$ is not define for points at the bifurcation set. Away from such set, $X$ is equivalent to the smooth map $D\tilde\pi\oX$. Furthermore, since $\det(D\tilde\pi)>0$ in $\svm$, the solution curves of $(V,X)$ are obtained from the integral curves of $\oX$ and by the reparametrization
	\eqn{
	b=-3x^2-ay, \qquad c=-3y^2-ax.
	}
	Straightforward computations show that the CDE $(V,X=D\tilde\pi\oX)$ with a hyperbolic umbilic singularity has the local normal forms as stated in theorem \ref{teo:main}.

    \newpage

    \subsection{Phase portraits of generic CDEs with three parameters}\label{sec:pics}
    In this section we present the phase portraits of some of the normal forms of theorem \ref{teo:main}. Recall that $\sv$ is the phase space, this is, the solution curves belong to the manifold $\sv$. Such manifolds are as depicted in figures  \ref{fig:st-stratification}, \ref{fig:hu-stratification} and \ref{fig:eu-stratification}. At the bifurcation sets $B$, the solution curves have a sudden change of behavior. It is said, a catastrophe occurs.\\

    In some words, a generic constrained differential equation with three parameters is likely to qualitatively behave as one of the pictures presented in this section.
	\subsubsection*{Regular}
	In this case the constraint manifold $S_V$ has no singularities. So the constraint manifold $\sv$ is the whole $\R^3$. In figures \ref{fig:reg-constant} and \ref{fig:reg-source} we show the phase portraits of the flow-box and source case. The pictures of the saddle-1, saddle-2 and sink are similar to figure \ref{fig:reg-source} just changing accordingly the directions of the invariant manifolds.

	    \begin{figure}[!htbp]
	        \centering
	        \subfloat[Flow-box phase portrait]{\makebox[0.35\textwidth] {\includegraphics[scale=0.8]{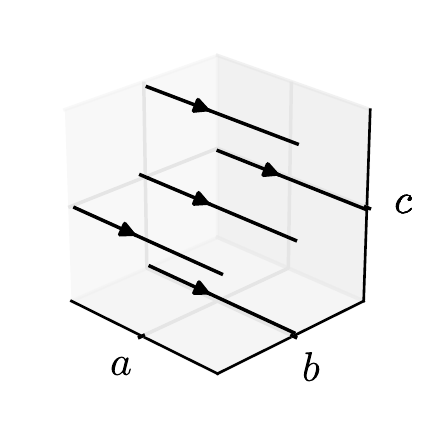}\label{fig:reg-constant}}}
	        \subfloat[Source phase portrait]{\makebox[0.35\textwidth] { \includegraphics[scale=0.8]{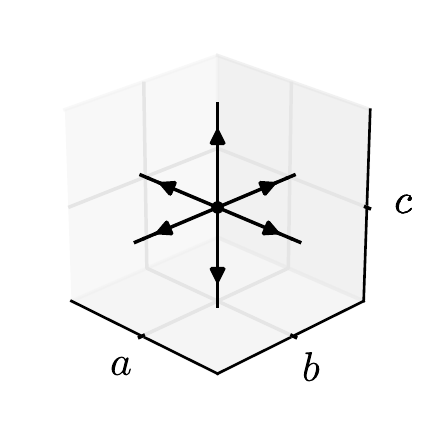}\label{fig:reg-source} }}
	        \caption{Phase portraits corresponding to the regular case. We show only two examples corresponding to the flow-box (left) and the source (right) case. As the constraint manifold \ensuremath{\sv} is regular, the only singularities that may happen are equilibrium points, this is \ensuremath{X(0)=0}. Due to the same reason, there are not jumps. The remaining cases can be obtained by reversing the direction of the flow accordingly to the corresponding spectra.}
	    \end{figure}
    
	\subsubsection*{Fold} 

	In this case the potential function is $V(x,a,b,c)=\frac{1}{3}x^3+ax$. The constraint manifold $\sv=\lbrace (x,a,b,c)\in\R^4 | x^2+a=0\rbrace$ is $3$-dimensional. The attracting part of $\sv$ is given by
	\eqn{
	\svm=\left\{ (x,a,b,c)\in\sv\,|\,x\geq0 \right\}
	}
	The projection $\tilde\pi=\pi|\sv$ is given by

	\eqn{
	\tilde\pi=\pi(x,-x^2,b,c)=(-x^2,b,c).
	}

	Note that the determinant of $\tilde\pi$ is non-positive for points in $\svm$. From this point we know that the trajectories of $\oX$ and of $X$ have opposite direction. Due to the presence of $3$ parameters, the fold set is the plane \[B=\lbrace (x,a,b,c)\in\R^4|(x,a)=(0,0) \rbrace.\]

	It is important to note that all phase portraits of the the Fold case have projections matching figure 3 of \cite{Takens1}.
	\begin{itemize}
	    \item Flow-box-1. By recalling the normal form in theorem \ref{teo:main} it is easy to see that the integral curves are as depicted in figure \ref{fig:fold-reg}.
	    \begin{figure}[!htbp]
	        \centering
	        \includegraphics[scale=0.8]{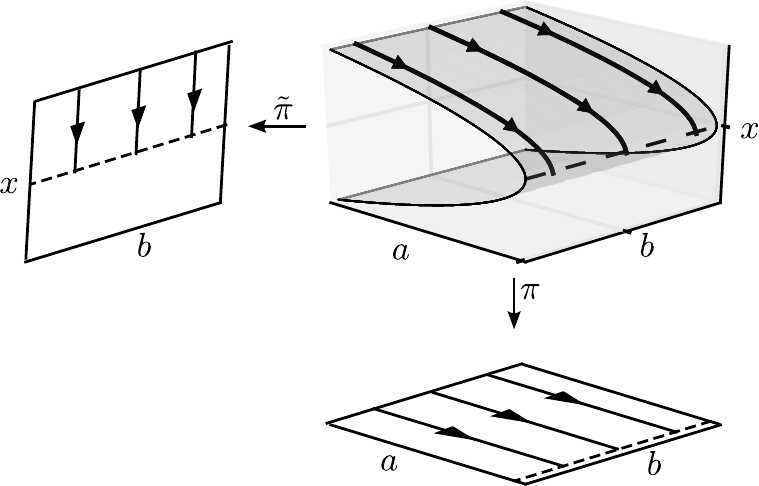}
	        \caption{Phase portrait and projections of the flow-box-1 case with the variable \ensuremath{c} suppressed.  The shown folded surface is a tomography of the \ensuremath{3}-dimensional constraint manifold \ensuremath{\sv}. The dotted line corresponds to the \ensuremath{2}-dimensional bifurcation set. Observe that since we are suppressing the variable \ensuremath{c}, this phase portrait is also shown in figure 3 of \protect\cite{Takens1}}
	        \label{fig:fold-reg}
	    \end{figure}
    
	    \item Flow-box-2.
	        The phase portrait in this case is as in figure \ref{fig:fold-reg}, just the direction of the trajectories is reversed.
    
	    \item Source, Sink and Saddle.\\
	    In all the following cases, a $1$-dimensional center manifold $W^{^C}$ appears within the fold surface. The choice of $\rho=\pm 1$ changes the direction of $W^{^C}$. In all the following pictures we set $\rho=1$. The direction of the integral curves of $\oX$ and of $(V,X)$ are in opposite direction since $\det(D\tilde \pi)$ is negative in $\svm$ \cite{Takens1}.
	    \begin{figure}[!htbp]
	        \centering
	        \subfloat[Source phase portraits]{\includegraphics[scale=0.85]{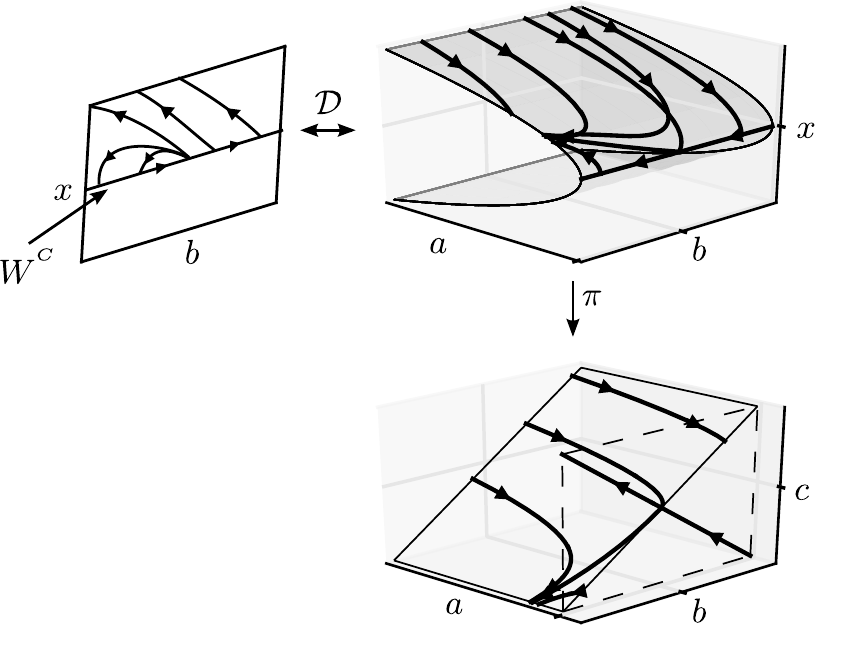}}\hfill
	        \subfloat[Sink phase portraits]{\includegraphics[scale=0.85]{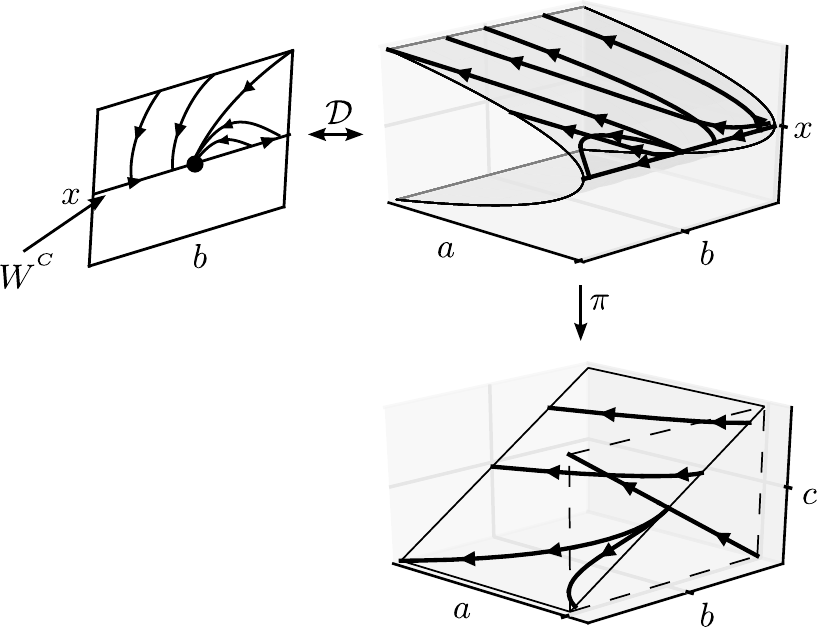}}\\
	        \subfloat[Saddle phase portraits]{\includegraphics[scale=0.85]{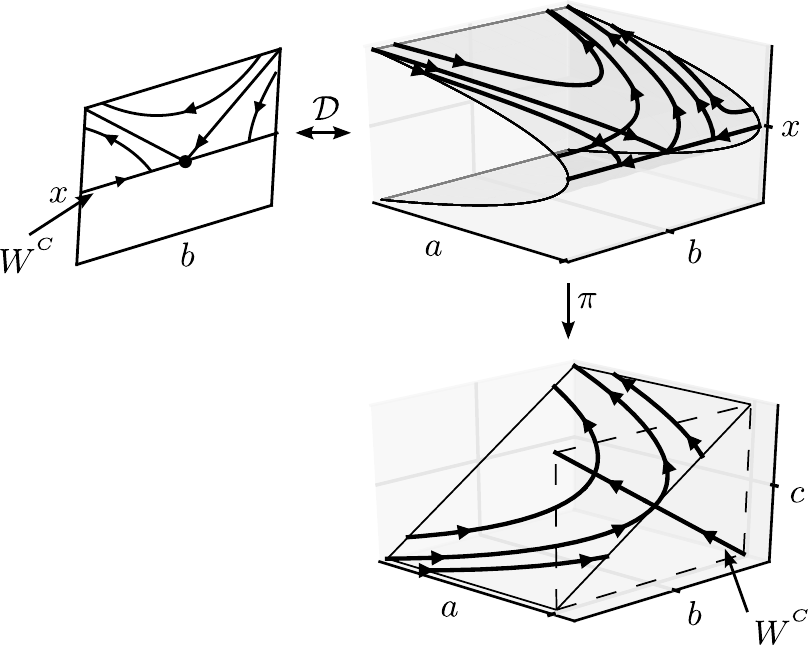}}
	        \caption{Projections of the solutions curves of the source, sink and saddle cases. The folded surface is a tomography (fixed value of \ensuremath{c}) of the \ensuremath{3} dimensional manifold \ensuremath{\sv}. The hyperplane \ensuremath{\lbrace {x,a,b,c}|b=c \rbrace} is invariant. In such space, the dynamics are reduced to the \ensuremath{2}-parameter fold listed in \cite{Takens1} and in theorem \protect\ref{teo:Takens}. Observe that there exists a \ensuremath{1}-dimensional manifold which is locally tangent to the fold surface.}
	        \label{fig:fold-others}
	    \end{figure}
	\end{itemize}
	\newpage
	\subsubsection*{Cusp}

	\begin{itemize}
	    \item The flow-box and the (dual) flow-box cases. Since in this case the generic vector field $\oX$ is a flow box, the phase portraits that we obtain are just the same as in Takens's list \cite{Takens1}. Just one more artificial variable, the $c$-coordinate, is considered.
    
	    \begin{figure}[!htbp]
	        \centering
	        \subfloat[Flow-box phase portraits]{\includegraphics[scale=0.95]{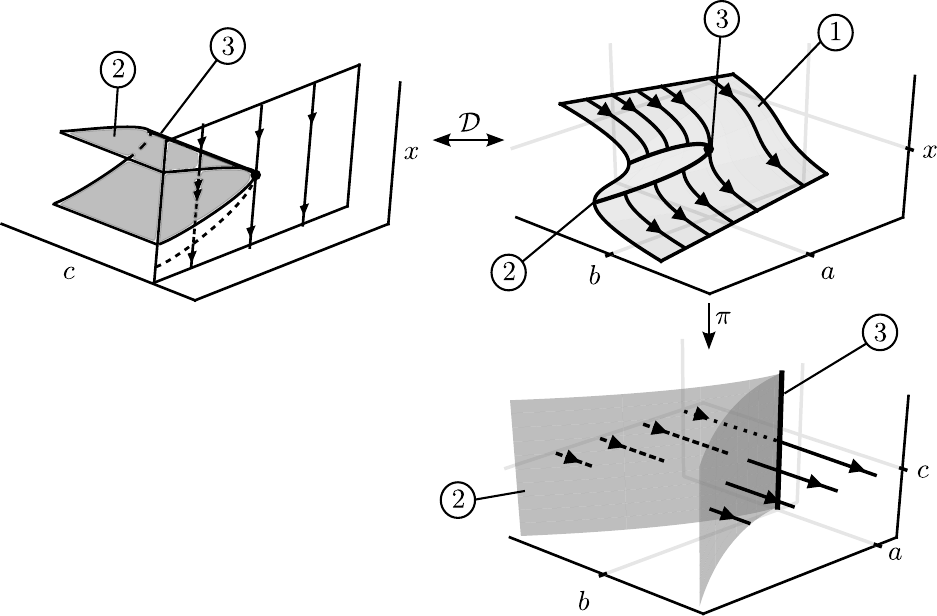}}\\
	        \subfloat[(Dual) Flow-box phase portraits]{\includegraphics[scale=0.95]{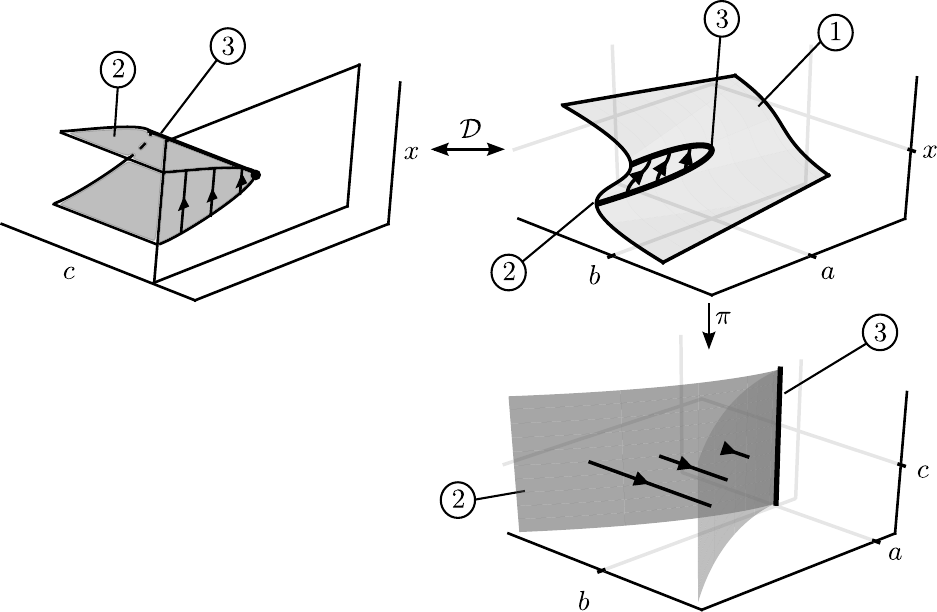}}
	        \caption{Phase portraits of the cusp (top) and the dual cusp (bottom) cases. \protect\circled{$1$} A tomography (the variable \ensuremath{c} is fixed and suppressed) of the \ensuremath{3}-dimensional manifold \ensuremath{\sv}.  \protect\circled{$2$} The \ensuremath{2}-dimensional fold manifold.  \protect\circled{$3$} The \ensuremath{1}-dimensional cusp manifold. Compare with \protect\cite{Takens1} figure 3 and note the resemblance with these projections. }
	        \label{fig:main-cusp}
	    \end{figure}
    
	\end{itemize} 
	\newpage
	\subsubsection*{Swallowtail}
	In this section we present the phase portrait of a generic CDE in a neighborhood of a swallowtail singularity. This is, we consider the potential function
	\eqn{
	V(x,a,b,c)=\dfrac{1}{5}x^5+\dfrac{1}{3}ax^3+\dfrac{1}{2}bx^2+cx.
	}
	Locally, the vector field is a flow-box and is depicted in figure \ref{fig:cde-pics-st}. It is straightforward to see that if one is to consider a potential function $-V$, the topology of the solutions does not change. Observe the jumping feature in the case $a<0$, see section \ref{sec:jumps} for more details on such phenomenon.
	\begin{figure}[!htbp]
	    \centering
	    \includegraphics[scale=0.95]{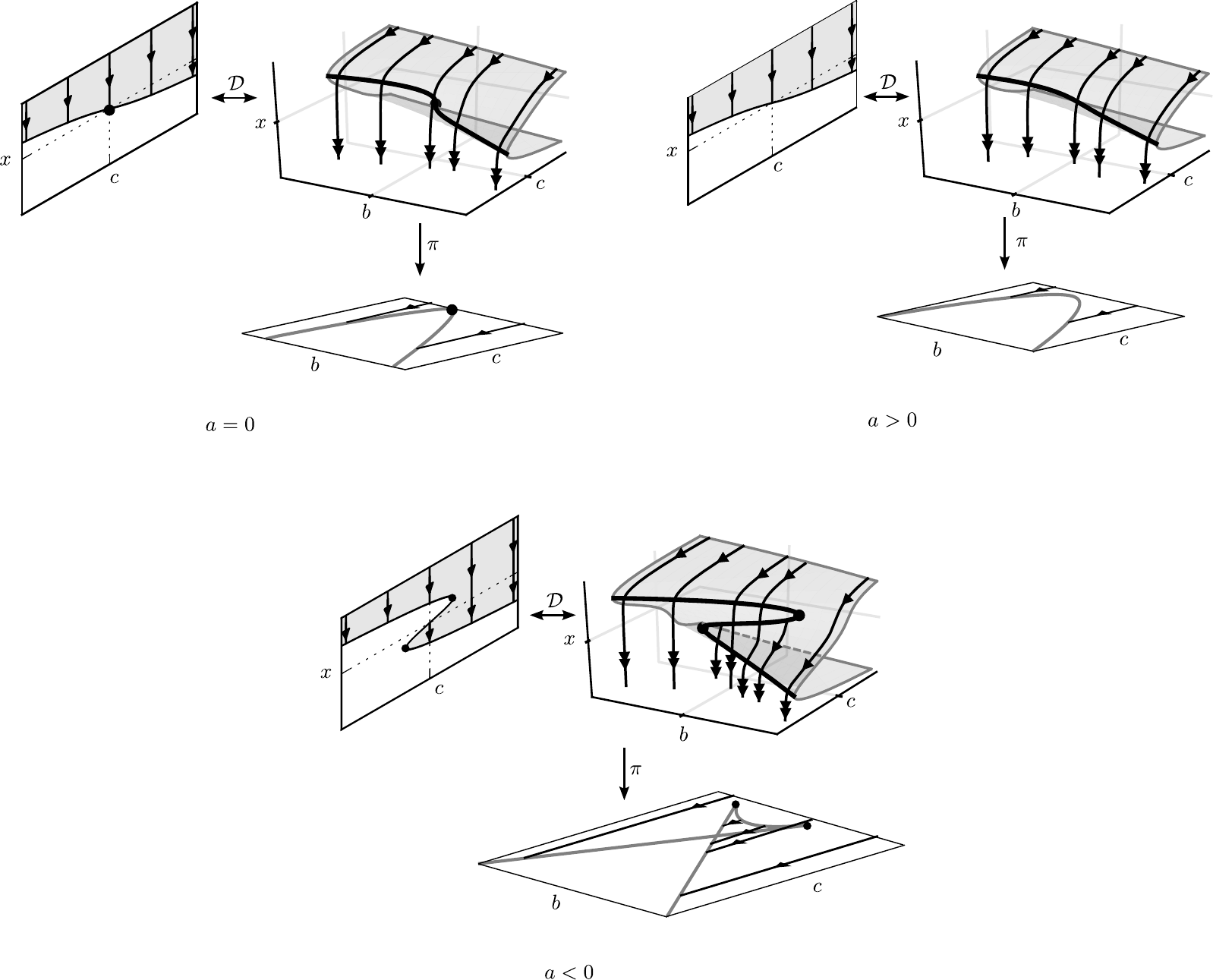}
	    \caption{Tomographies for different values of the parameter \ensuremath{a} of the phase portraits of the swallowtail case. The catastrophe is stratified in the sets shown in figure \ref{fig:st-stratification}. Note the particular behavior of the solutions when \ensuremath{a<0}. In such case, there exists a region near the origin where jumps may occur. Observe that the shown solutions are in accordance with our description is section \protect\ref{geo:st}, that is \ensuremath{X} is transverse to the projection of the singular set.}
	\end{figure}
	\label{fig:cde-pics-st}
	\newpage
	\subsubsection*{Hyperbolic Umbilic} 

	The total space is $\R^5$. The constraint manifold and the bifurcation set are detailed in figure \ref{fig:hu-stratification}. From the exposition of section \ref{sec:proof} we know that the origin of the desingularized vector field is an equilibrium point. We show in figures \ref{fig:hu-cs} and \ref{fig:hu-cc} the phase portraits of the center-saddle and center-center cases respectively. We take advantage on the fact that $\lbrace a=0\rbrace$ is an invariant set. This means that the integral curves are arranged by those in the subspace $(x,y,0,b,c)$. Note that both phase portraits satisfy the geometric description given in section \ref{geo:hu}. That is, the integral curves are transversal to the singular sets. We have decided to show only the solution curves within $\svm$ as those are the ones we are interested in.

	\begin{figure}[!htbp]
	    \centering
	    \includegraphics[scale=1]{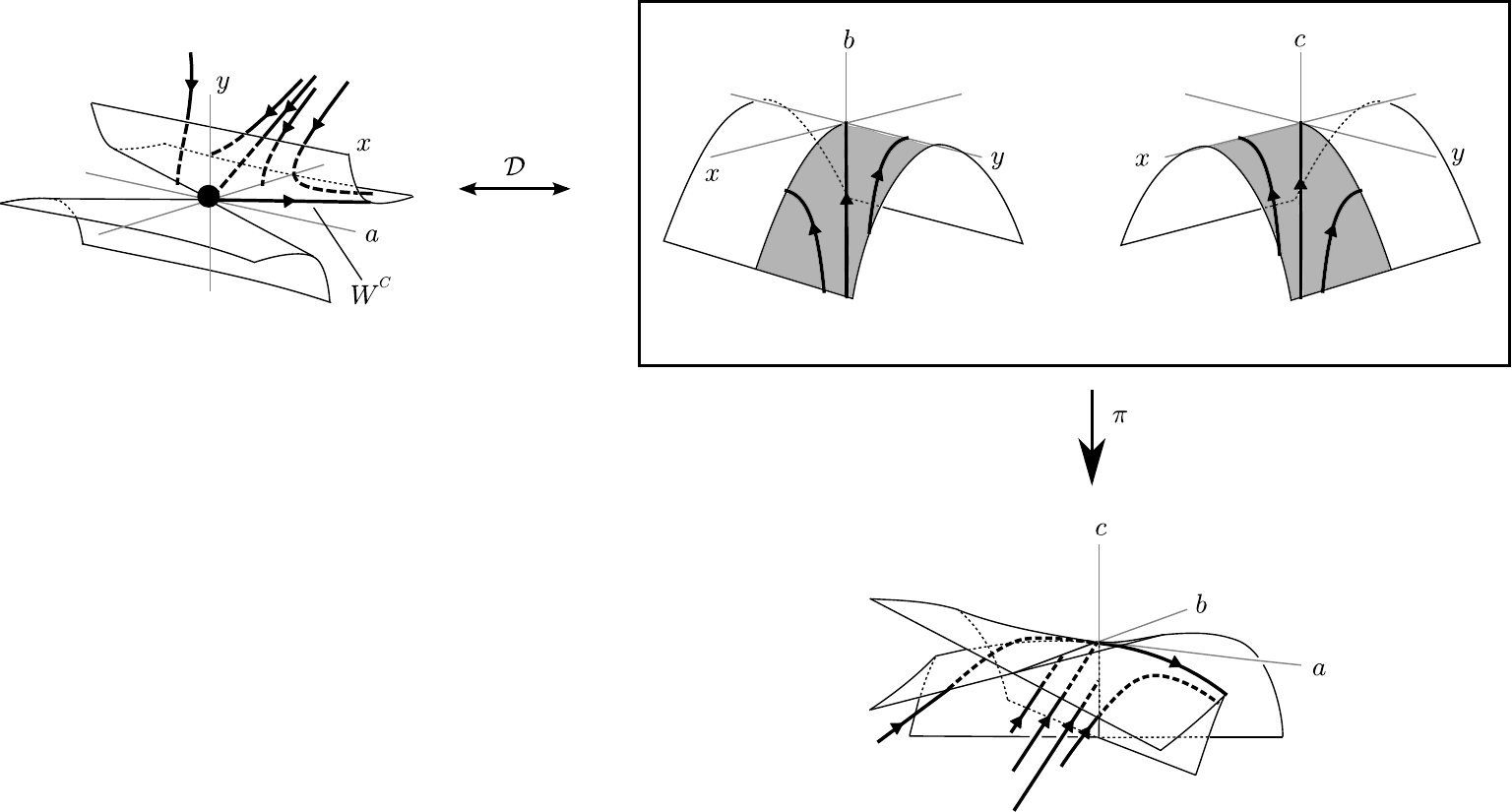}
	    \caption{ Phase portraits of the center-saddle case of the hyperbolic umbilic. Top left: the desingularized vector field. The origin is a semihyperbolic equilibrium point. Two directions correspond to a saddle, and one to a center manifold. Locally, such manifold is tangent to the singularity cone depicted. The center manifold changes direction depending on the \ensuremath{\pm} sign of the normal form. The trajectories shown are within the projection of \ensuremath{S_{V,min}}. Top right: Trajectories of the CDE \ensuremath{(V,X)} restricted to \ensuremath{\svm}. The latter set is shown as a shaded region. Bottom: the projection of the solution curves into the parameter space. Note that the phase portrait shown satisfy the conjecture given in section \protect\ref{geo:hu}. }
	    \label{fig:hu-cs}
	\end{figure}

	\begin{figure}[!htbp]
	    \centering
	    \includegraphics[scale=1]{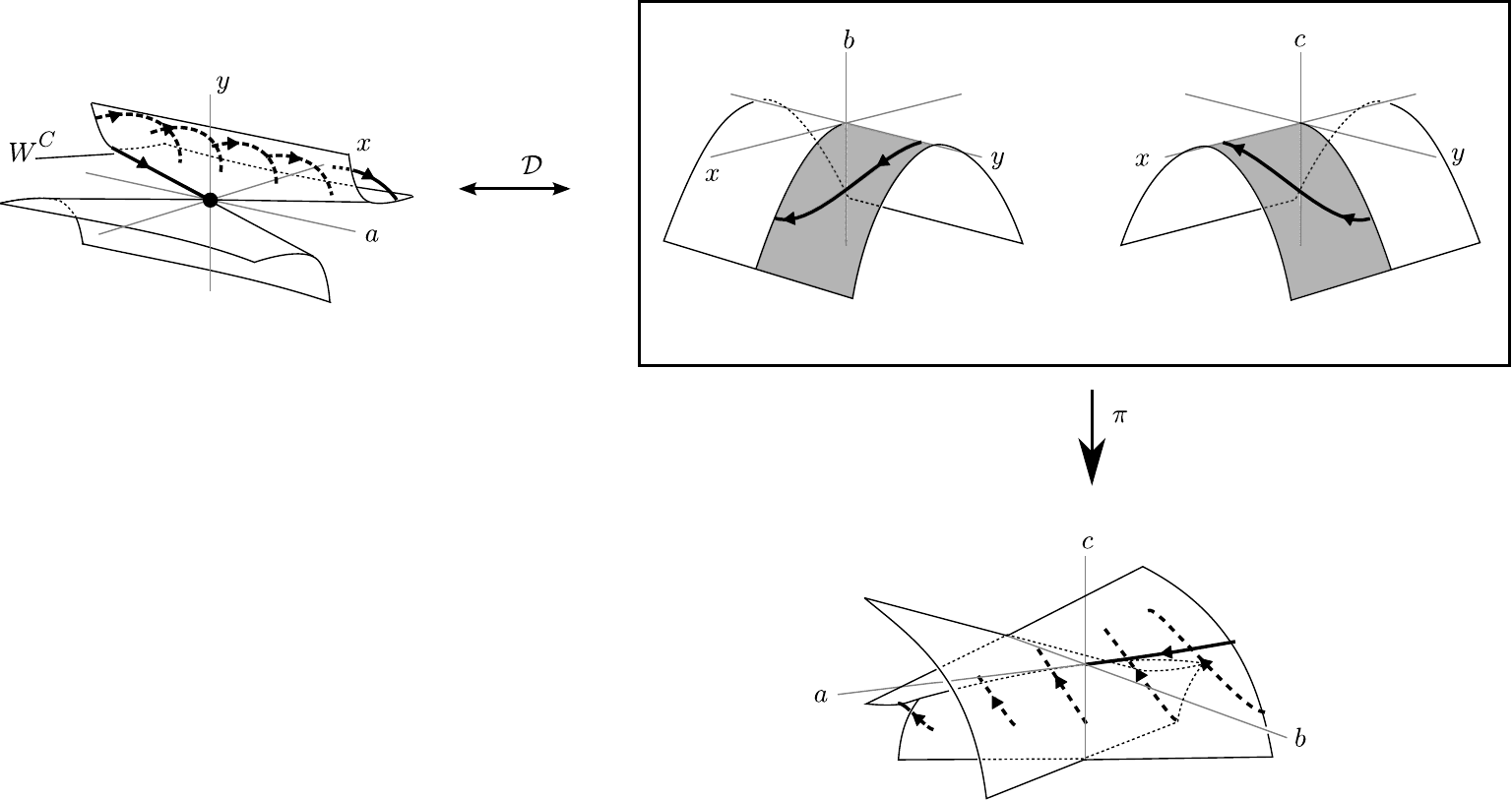}
	    \caption{ Phase portraits of the center case of the hyperbolic umbilic singularity. Top left: the desingularized vector field. Such vector field has an equilibrium at the origin and a 3-dimensional center manifold. The direction of the 1-dimensional center manifold depicted changes according to the \ensuremath{\pm} sign of the normal form. Top right: Solutions curves in the invariant space \ensuremath{\svm|a=0}. The latter set is shown as a shaded region. Bottom: the projection of the solution curves into the parameter space.  Note that the phase portrait shown satisfy the conjecture given in section \protect\ref{geo:hu}.}
	    \label{fig:hu-cc}
	\end{figure}
	\newpage
	\subsubsection*{Elliptic Umbilic} The constraint manifold and the bifurcation set are described in figure \protect\ref{fig:hu-stratification}. We show in figure \protect\ref{fig:eu-cs} the phase portrait of the center-saddle. It is easy to check that \ensuremath{\svm|a=0} is just a point, so unlike in the hyperbolic umbilic case, there are no solutions curves of the corresponding CDE at \ensuremath{\lbrace a=0\rbrace}. Therefore, we show projections into \ensuremath{\svm|a>0} with the value of \ensuremath{a} fixed, of some integral curves. 

	\begin{figure}[!htbp]
	    \centering
	    \includegraphics[scale=1]{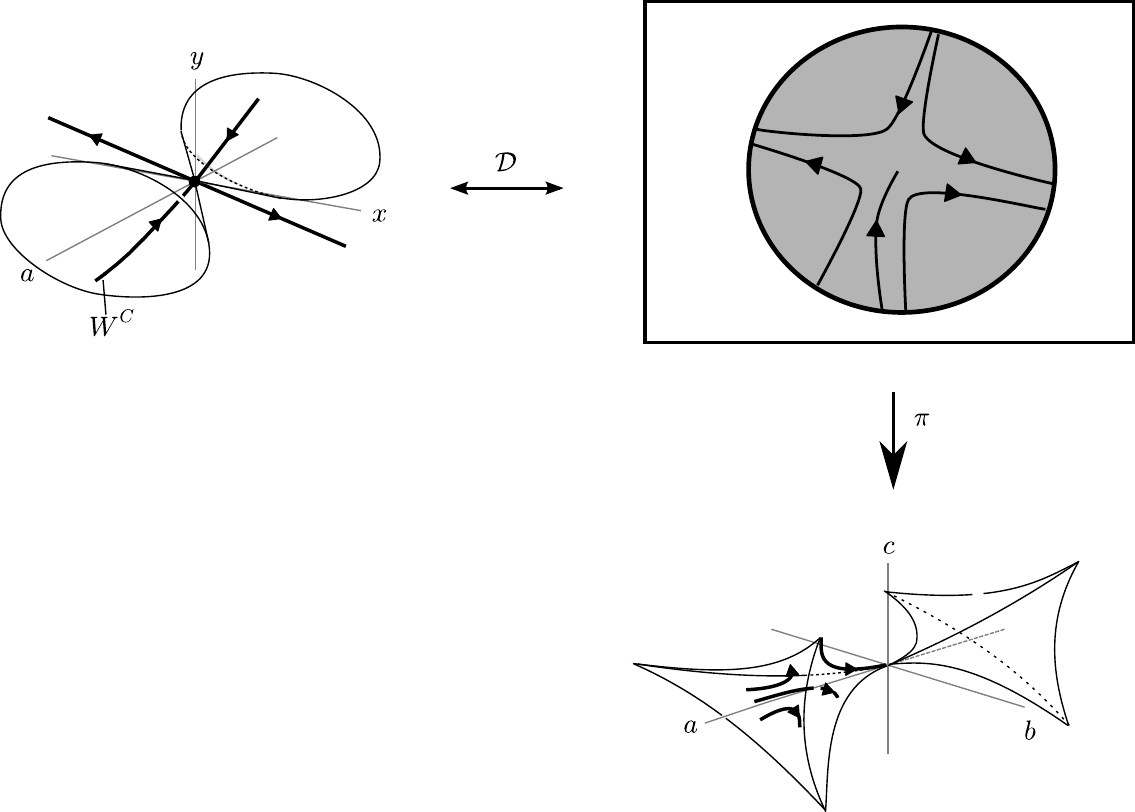}
	    \caption{ Phase portraits of the center-saddle case of the elliptic umbilic. Top left: the desingularized vector field. The origin is a semi-hyperbolic equilibrium point with two hyperbolic and one center directions. The center manifold is locally tangent to the singularity cone depicted. The hyperbolic directions shown (corresponding to a saddle) together with the center manifold arrange all the integral curves sufficiently close to the origin. Top right: Projection of some solutions curves into a tomography (\ensuremath{a} fixed) of \ensuremath{\svm}. Observe that \ensuremath{\svm} is the inside region of a cone (refer to figure \protect\ref{fig:eu-stratification} and section \protect\ref{geo:eu}). Bottom: the projection of the solution curves into the parameter space.}
	    \label{fig:eu-cs}
	\end{figure}

    \newpage
    \subsection{Jumps in generic CDEs with three parameters}\label{sec:jumps}
	    Constrained differential equations and slow-fast systems are closely related. CDEs may represent an approximation of some generic dynamical systems with two or more different time scales. One interesting behavior of the latter type of systems is formed by jumps. Roughly speaking a jump is a rapid transition from one stable part of $\sv$ to another. One common example of such behavior a relaxation oscillations. See also the examples in section \ref{sec:motivation}, where the characteristic property of jumps is described. \\
    
	    In this section we discuss the possibility of encountering such jumping behavior in generic CDEs with a swallowtail, hyperbolic, or elliptic umbilic singularity.
    
	    \begin{definition}[\sc Finite Jump] Let $\gamma$ be a solution curve of a CDE $(V,X)$. Let $q\in B$. We say that $\gamma$ has a finite jump at $q$ if
	        \begin{enumerate}
	        \item There exists a point $p\in \svm$ such that $\pi(p)=\pi(q)$.
	        \item There exists a curve from $p$ to $q$ along which $V$ is monotonically decreasing.
	        \end{enumerate}
	    \end{definition}
    
	    In the case of the fold singularity, there are no finite jumps. In the case of the cusp singularity, a solution curve $\gamma$ has the jump \cite{Takens1}
    
	    \eq{
	    (x,a,b)\to(-2x,a,b).
	    }
    
	    To study if there exist finite jumps in the generic CDEs with three parameters, we have the following proposition.
    
	    \begin{proposition}[Jumps in CDEs with 3 parameters] Let $(V,X)$ be a generic CDE with potential function $V$ one of the codimension 3 catastrophes. Let $\gamma$ be a solution curve of $(V,X)$. Then
	        \begin{enumerate}
	            \item If $V$ is the swallowtail catastrophe, then there are finite jumps as follows. Let $(x,a,b,c)$ be coordinates of $\gamma\cap B$, then the finite jump is given by
            
	             \eq{
	             	(x,a,b,c)\mapsto(-x-\sqrt{-2x^2-a},a,b,c),
	             }
	             where it is readily seen that 
	             \eq{
	    x\in \left( -\sqrt{ -\dfrac{a}{2} }, \sqrt{-\dfrac{a}{2} } \right), \; a<0.
	}
	            \item If $V$ is the hyperbolic or the elliptic umbilic catastrophe, then there are no finite jumps.
	        \end{enumerate}
        
	    \end{proposition}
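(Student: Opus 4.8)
The plan is to reduce the whole statement to a fiberwise analysis. By the solution properties S1--S3, a jump always takes place along a single fast fiber $\pi^{-1}(\pi(q))$, along which the parameters $(a,b,c)$ are frozen and the potential restricts to an ordinary function $W$ of the fast variable(s). In these terms a finite jump at $q\in B$ requires two \emph{distinct} critical points of $W$ lying in $\svm$ on that fiber: the degenerate one $q$ (where the fast Hessian of $V$ is singular) and a stable one $p$, joined by a path along which $W$ decreases. Thus both parts of the proposition are really assertions about how many critical points of $W$ a frozen fiber can carry inside $\svm$. Note that by S1 a solution curve lies in $\svm$, so a jump point must satisfy $q\in B\cap\svm$.

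For the swallowtail the fiber is one-dimensional and $W'=x^4+ax^2+bx+c$, $W''=4x^3+2ax+b$. A point $q=(x_0,a,b,c)\in B$ is a common zero of $W'$ and $W''$, i.e. $x_0$ is a double root of the quartic $W'$. First I would read off $b=-4x_0^3-2ax_0$ and $c=x_0^2(a+3x_0^2)$ from $W'(x_0)=W''(x_0)=0$ and factor $W'=(x-x_0)^2\bigl(x^2+2x_0x+(a+3x_0^2)\bigr)$. The remaining critical points are the roots $-x_0\pm\sqrt{-2x_0^2-a}$ of the residual quadratic, which are real and distinct precisely when $-2x_0^2-a>0$, that is $a<0$ and $x_0\in(-\sqrt{-a/2},\sqrt{-a/2})$ --- exactly the stated range. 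One of these two roots is the local minimum of $W$ (where $W''>0$) and hence lies in $\svm$, the other is a local maximum; a sign check of $W'$ on the interval between $x_0$ and that minimum shows $W$ is monotone there, which exhibits the finite jump. The only delicate point is the bookkeeping of which residual root is the stable one.

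For the hyperbolic and elliptic umbilics the mechanism is completely different and rests on one convexity observation. On a frozen fiber $\svm$ is, by definition, the set where the fast Hessian of $V$ is positive semidefinite: for the hyperbolic umbilic this is $\{x\ge 0,\ y\ge 0,\ 36xy\ge a^2\}$, the region over a branch of the hyperbola $xy=a^2/36$ in the first quadrant, and for the elliptic umbilic it is the disk $\{x^2+y^2\le a^2/9\}$ (the cross-section of the cone $\svm$, for $a>0$). \emph{Both of these sets are convex.} On such a set $W$ is convex, since its Hessian is positive semidefinite, and it is strictly convex on the interior, since the determinant of the fast Hessian vanishes only on $B=\partial\svm$. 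A strictly convex function on a convex set has at most one critical point there: if $q,p$ were two distinct critical points in $\svm$, the segment $[q,p]$ would consist of minima of $W|_{\svm}$, forcing $W$ to be constant on a segment whose relative interior meets the strictly convex region, a contradiction. Since $q\in B\cap\svm$ is itself a critical point of $W$, it is \emph{the} unique critical point in $\svm$ on the fiber, so no distinct $p\in\svm$ can exist and no finite jump is possible.

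The heart of the matter --- and the step I expect to be the real obstacle to get exactly right --- is the fiberwise convexity of $\svm$ for the two umbilics, for it is precisely what forbids jumps. The contrast with the swallowtail is illuminating: there $\svm$ on a fiber is the solution set of the cubic inequality $W''=4x^3+2ax+b\ge 0$, which can be a \emph{disjoint} union of two intervals and hence carry two separate minima, whereas the umbilic stable sets are single convex regions. In short, jumps are possible exactly when the fiberwise stable set is disconnected. I would therefore present the convexity of $\{x\ge0,\ y\ge0,\ xy\ge a^2/36\}$ and of the disk with care, and remark that for the umbilic case the monotone-$W$ clause in the definition of a finite jump is not even needed: the mere absence of a second critical point of $W$ in $\svm$ already closes it off.
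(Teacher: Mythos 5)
Your proof is correct; for the swallowtail it coincides with the paper's method, but for the two umbilics it takes a genuinely different route. The paper proceeds by brute force: it writes $\pi(p)=\pi(q)$ as a polynomial system on the fiber, solves it explicitly (after factoring out the trivial root it obtains the two candidate arrival points $(X_\pm,Y_\pm)$ in closed form), and then verifies through a chain of explicit inequalities that neither candidate satisfies the defining conditions $X+Y\geq 0$ and $36XY-a^2\geq 0$ of $\svm$; the elliptic case is asserted to follow the same computations. Your argument replaces all of this by the single observation that on a frozen fiber the set where the fast Hessian is positive semidefinite is convex (the region over the first-quadrant branch of $xy=a^2/36$, respectively the disk $x^2+y^2\leq a^2/9$), so that $W$ is convex there and strictly convex on its interior, whence any critical point of $W$ in that set is a global minimizer of the restriction and there can be at most one; since the departure point $q\in B\cap\svm$ is already such a critical point, no distinct arrival point exists. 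This is shorter, it explains \emph{why} the umbilics forbid jumps (fiberwise convexity, hence connectedness, of the stable set, in contrast with the possibly disconnected stable set of the swallowtail fiber) instead of merely certifying it; what the paper's computation buys in exchange is the explicit coordinates of the would-be arrival points. The only boundary case your argument should address explicitly is $a=0$ for the hyperbolic umbilic, where the PSD region is the whole closed first quadrant and a segment joining two critical points could lie on a coordinate axis; there $W(x,0)=x^3+bx+\mathrm{const}$ is still strictly convex for $x>0$, so the conclusion survives.

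Do pin down the bookkeeping you flagged in the swallowtail case. Writing $W'(x)=(x-x_0)^2(x-r_+)(x-r_-)$ with $r_\pm=-x_0\pm\sqrt{-2x_0^2-a}$, one gets $W''(r_\pm)=(r_\pm-x_0)^2(r_\pm-r_\mp)$, so the residual root with $W''\geq 0$, i.e.\ the one lying in $\svm$, is $r_+=-x_0+\sqrt{-2x_0^2-a}$, and it is also the root reached from $x_0$ by a monotonically decreasing path of $W$. The proposition as printed sends the jump to $-x-\sqrt{-2x^2-a}$, which is the local maximum; resolving your ``delicate point'' therefore not only completes your proof but also corrects the sign in the stated formula.
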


	    \begin{proof} We detail the proof of the hyperbolic umbilic case. The other cases follow the same methodology.\\
        
	        Recall that for the hyperbolic umbilic
        
	        \eq{
	        	\sv = \left\{ (x,y,a,b,c)\in\R^5 \; | \; b=-3x^2-ay, \; c=-3y^2+ax \right\},
	        }
	        \eq{\label{eq:hu-svm-proof}
	        	\svm = \left\{ (x,y,a,b,c)\in\sv \; | \;  36xy-a^2 \geq 0, \; x+y>0 \right\},
	        }
	        and
	        \eq{
	        	B = \left\{ (x,y,a,b,c)\in\sv \; | \; 36xy-a^2 =0\right\}.
	        }
        
	        Let $p=(x_1,y_1,a_1,b_1,c_1)\in S_V$ and $q=(x_2,y_2,a_2,b_2,c_2)\in B$. So we have that the projections $\pi(p)$ and $\pi(q)$ read
	        \begin{equation}
	        \begin{split}
	        \pi(p)&=(a_1, -3x_1^2-a_1y_1, -3y_1^2-a_1x_1)\\
	        \pi(q)&=(a_2, -3x_2^2-a_2y_2, -3y_2^2-a_2x_2), \quad a_2^2=36x_2y_2.
	        \end{split}
	        \end{equation}

	        The point $q=\gamma\cap B$ is known. The point $p$ is unknown, it corresponds to a possible arriving point when a finite jump occurs. If such a point $p$ exists, then it is a nontrivial solution of $\pi(p)=\pi(q)$. The easiest case is when $a_2=0$. We have
	        \begin{equation}
	        \begin{split}
	        \pi(p)&=(0, -3x_1^2, -3y_1^2)\\
	        \pi(q)&=(0, -3x_2^2, -3y_2^2), \quad 0=x_2y_2.
	        \end{split}
	        \end{equation}

	        Here we have two cases: 1) $0=x_2y_2 \implies x_2=0, \text{ and } y_2\neq 0$, or
	        2) $0=x_2y_2 \implies x_2\neq 0, \text{ and } y_2= 0$.
	        \begin{enumerate}
	        \def\labelenumi{\arabic{enumi}.}
	        \item
	          $a_2=0, \, x_2=0, \, y_2\neq 0$. We have
	          \begin{equation}
	              \begin{split}
	              -3x_1^2&=0\\
	              -3y_1^2&=-3y_2
	              \end{split}
	              \end{equation}

	          The non trivial solution is $(x_1,y_1)=(0,-y_2)$. So, there is a possible finite jump of the form
	          \eq{
	          q_1=(0,y_2,0,b_2,c_2) \mapsto p_1=(0,-y_2,0,b_2,c_2)
	          }
	        \item
	          $a_2=0, \, x_2\neq 0, \, y_2= 0$. Similarly we have the possible jump
	          \eq{
	          q_2=(x_2,0,0,b_2,c_2)\mapsto p_2=(-x_2,0,0,b_2,c_2).
	          }
	        \end{enumerate}

	        Now we check if any of such arriving points are in
	        $S_{V,min}$. The conditions for a
	        point $p=(x,y,a,b,c)$ to be in $S_{V,min}$ are
	        \begin{equation}
	        \begin{split}
	        -3x^2-ay-b&=0\\
	        -3y^2-ax-c&=0\\
	        36xy-a^2 &\geq 0\\
	        x+y &\geq 0.
	        \end{split}
	        \end{equation}

	        It is readily seen then that for $a=0$, $p_1$ and $p_2$ are not points in $\svm$ as the last inequality is not satisfied.\\

	        Now, we study the case $a_2\neq 0$. The problem $\pi(p)=\pi(q)$ can be rewritten as the nonlinear
	        simultaneous equation
	        \begin{equation}
	        \begin{split}
	        -3x_1^2-a_2y_1+3x_2^2+a_2y_2&=0\\
	        -3y_1^2-a_2x_1+3y_2^2+a_2x_2&=0.
	        \end{split}
	        \end{equation}

	        Since $a_2\neq 0$ we can write from the first equation
	        \begin{equation}
	        \begin{split}
	        y_1=\frac{-3x_1^2+3x_2^2+a_2y_2}{a_2},
	        \end{split}
	        \end{equation}

	        substituting in the second equation we get
	        \begin{equation}
	        \begin{split}
	        27x_1^4-(54x_2^2+18a_2y_1)x_2^2+a_2^3x_1+18x_2^2a_2y_2-a_2^3x_2+27x_2^4=0.
	        \end{split}
	        \end{equation}

	        It is not difficult to see that $x_1=x_2$ is a double root, so we have the factorization
	        \begin{equation}
	        \begin{split}
	        (x_1-x_2)^2(3x_1^2+6x_2x_1+3x_2^2-2a_2y_2)=0.
	        \end{split}
	        \end{equation}

	        The roots of $3x_1^2+6x_2x_1+3x_2^2-2a_2y_2=0$ are
	        \begin{equation}
	        \begin{split}
	        X_\pm&=-x_2\pm\frac{2}{\sqrt{6}}\sqrt{a_2y_2}.
	        \end{split}
	        \end{equation}

	        The corresponding $y_1$ solutions are
	        \begin{equation}
	        \begin{split}
	        Y_\pm&=-y_2\pm 2\sqrt{6} x_2 \sqrt{\frac{y_2}{a_2}}.
	        \end{split}
	        \end{equation}

	        This is, for a trajectory $\gamma$ such that
	        $\gamma\vert B=(x_2,y_2,a_2,b_2,c_2)$, there are possible jumps to
	        \begin{equation}
	        \begin{split}
	        p_1&=\left( -x_2+\frac{2}{\sqrt{6}}\sqrt{a_2y_2}, -y_2+\frac{2\sqrt{6}\sqrt{y_2}x_2}{\sqrt{a_2}}, a_2,b_2,c_2 \right)\\
	        p_2&=\left( -x_2-\frac{2}{\sqrt{6}}\sqrt{a_2y_2}, -y_2-\frac{2\sqrt{6}\sqrt{y_2}x_2}{\sqrt{a_2}}, a_2,b_2,c_2 \right).
	        \end{split}
	        \end{equation}

	        Just as in the previous case, we shall check if the points
	        $(X_+,Y_+,a_2,b_2,c_2)$, $(X_-,Y_-,a_2,b_2,c_2)$ are contained in
	        $S_{V,min}$. This is, we have to check if the following inequalities are
	        satisfied.
	        \begin{equation}\label{eq:ineq1}
	        \begin{split}
	        X_++Y_+&\geq 0\\
	        36X_+Y_+-a_2^2&\geq 0
	        \end{split}
	        \end{equation}
	        and
	        \begin{equation}\label{eq:ineq2}
	        \begin{split}
	        X_-+Y_-&\geq 0\\
	        36X_-Y_--a_2^2&\geq 0
	        \end{split}
	        \end{equation}

	        In both cases we have the further properties $36x_2y_2-a_2^2=0$ and
	        $x_2+y_2\geq 0$ (recall that $(x_2,y_2,a_2,b_2,c_2)\in B$). By substituting the value
	        $y_2=\frac{a^2}{36x_2}$ in $X_{\pm}$ and $Y_\pm$ we have
	        \begin{equation}
	        \begin{split}
	        X_\pm&=-x_2\pm\frac{a_2^{3/2}}{3\sqrt{6}x_2^{1/2}}\\
	        Y_\pm&=-\frac{a_2^2}{36x_2}\pm\frac{2}{\sqrt{6}}x_2^{1/2}a_2^{1/2}
	        \end{split}
	        \end{equation}
        
	        Now, \eqref{eq:ineq1} and \eqref{eq:ineq2} read
	        \begin{equation}\label{eq:ineq1b}
	        \begin{split}
	        -x_2+\frac{a_2^{3/2}}{3\sqrt{6}x_2^{1/2}}-\frac{a_2^2}{36x_2}+\frac{2}{\sqrt{6}}x_2^{1/2}a_2^{1/2}&\geq 0\\
	        \left( -x_2+\frac{a_2^{3/2}}{3\sqrt{6}x_2^{1/2}}  \right)\left( -\frac{a_2^2}{36x_2}+\frac{2}{\sqrt{6}}x_2^{1/2}a_2^{1/2} \right)-a_2^2 &\geq 0
	        \end{split}
	        \end{equation}

	        and
	        \begin{equation}\label{eq:ineq2b}
	        \begin{split}
	        -x_2-\frac{a_2^{3/2}}{3\sqrt{6}x_2^{1/2}}-\frac{a_2^2}{36x_2}-\frac{2}{\sqrt{6}}x_2^{1/2}a_2^{1/2}&\geq 0\\
	        \left( -x_2-\frac{a_2^{3/2}}{3\sqrt{6}x_2^{1/2}}  \right)\left( -\frac{a_2^2}{36x_2}-\frac{2}{\sqrt{6}}x_2^{1/2}a_2^{1/2} \right)-a_2^2 &\geq 0
	        \end{split}
	        \end{equation}

	        respectively. It is readily seen that \eqref{eq:ineq2b} is not satisfied. Now we focus on \eqref{eq:ineq1b}. First we check the conditions for $X_+\geq 0$ and $Y_+\geq 0$. We have

	        \[ X_+\geq 0 \implies x_2^{3/2} \leq \frac{a^{3/2}}{3\sqrt{6}}, \]

	        \[ Y_+\geq 0 \implies  \frac{1}{12\sqrt{6}}a^{3/2}\leq x_2^{3/2}.\]

	        This is $\frac{1}{12\sqrt{6}}a_2^{3/2}\leq x_2^{3/2}\leq\frac{1}{3\sqrt{6}}a_2^{3/2}$.
	        Of course this would imply that $X_++Y_+\geq 0$. Now we have to check if
	        for such interval $36X_+Y_+-a^2\geq 0$.

	        \[
	        36X_+Y_+-a^2=12\sqrt{6}x_2^{3/2}a_2^{1/2}-\frac{a_2^{7/2}}{3\sqrt{6}x_2^{3/2}}+4a^2,
	        \]

	        so we check if
	        \eq{
	        -12\sqrt{6}x_2^{3/2}a_2^{1/2}-\frac{a_2^{7/2}}{3\sqrt{6}x_2^{3/2}}+4a^2\geq 0
	        }
	        in the interval
	        \eq{\label{eq:int}
	        \frac{1}{12\sqrt{6}}a_2^{3/2}\leq x_2^{3/2}\leq\frac{1}{3\sqrt{6}}a_2^{3/2}.
	        }
	        We have that
	        \[ 12\sqrt{6}x_2^{3/2}a_2^{1/2}+\frac{a_2^{7/2}}{3\sqrt{6}x_2^{3/2}}=\frac{216x_2^3a_2^{1/2}+a_2^{7/2}}{3\sqrt{6}x_2^{3/2}},\]

	        but note that from \eqref{eq:int} we obtain
	        \[
	        \frac{1}{4}a_2^{3/2}\leq 3\sqrt{6}x_2^{3/2},
	        \]

	        so we have
	        \[
	        \frac{216x_2^3a_2^{1/2}+a_2^{7/2}}{3\sqrt{6}x_2^{3/2}}\geq 864x_2^{3/2}a_2^{-3/2}x_2^{3/2}a_2^{1/2}+4a_2^2\geq\frac{72}{\sqrt{6}}a_2^{1/2}x_2^{3/2}+4a_2^2\geq{5a_2^2}.
	        \]

	        This means that the inequality \[
	        -12\sqrt{6}x_2^{3/2}a_2^{1/2}-\frac{a_2^{7/2}}{3\sqrt{6}x_2^{3/2}}+4a^2\geq 0
	        \]

	        can not be satisfied, which implies that $\pi(p)=\pi(q)$ does not have nontrivial solutions in $\svm$. Therefore, it is not possible to have finite jumps.

	    \end{proof}
\appendix
\renewcommand*{\appendixname}{}

\section{Thom-Boardman symbol}\label{app:TB}
    Let $N^n,\, M^m$ be smooth manifolds, and consider that $(x_1,\ldots,x_n)$ and $(y_1,\ldots,y_m)$ are some local coordinates in $N$ and $M$ respectively. Let a smooth map $f:N^n\to M^m$ be given by $y_i=f_i(x)$. Let $i_1$ be a nonnegative integer. The set $\Sigma^{i_1}(f)$ consists of all points at which the kernel of $Df$ has dimension $i_1$. Given a finite sequence $I=(i_1,i_2,\ldots,i_k)$ of non-increasing nonnegative numbers, $\Sigma^I(f)$ is defined inductively as follows.

\begin{definition}[\sc Thom-symbol] Assume that $\Sigma^I(f)=\Sigma^{i_1,i_2,\ldots,i_k}(f)\subset N$ is a smooth manifold. Then 
    \eqn{
    \Sigma^{i_1,i_2,\ldots,i_k,i_{k+1}}=\Sigma^{i_{k+1}}\left( f|\Sigma^ I(f)\right)
    }
    
    is the set of all points at which the kernel of $D(f|\Sigma^ I(f))$ has dimension $i_{k+1}$.
    
\end{definition}
Naturally, we have the inclusions
\eqn{
N\supset\Sigma^{i_1}(f)\supset\Sigma^{i_1,i_2}(f)\supset\cdots
}

Denote by $\mathscr{E}_n$ the ring of germs of $\C^\infty$ functions on $\R^n$ at $0$. Let $\mathscr{I}$ be an ideal of $\mathscr{E}_n$.

\begin{definition}[\sc Jacobian extension] The \emph{Jacobian extension} $\Delta_k(\mathscr{I})$ of $\mathscr{I}$ is the ideal generated by $\mathscr{I}$ and all the Jacobians $\det\left(  \parcs{\phi_i}{x_j}\right)$ of order $k$, and where $\phi_i$ are functions in $\mathscr{I}$.    
\end{definition}

\begin{remark} $ $
    \begin{itemize}
        \item The ideal $\Delta_k(\mathscr{I})$ is independent on the choice of coordinates.
        \item $\Delta_{k+1}(\mathscr{I})\subseteq\Delta_k(\mathscr{I})$.
    \end{itemize}
\end{remark}

\begin{definition}[\sc Critical Jacobian extension] A Jacobian extension $\Delta_k(\mathscr{I})$ is said to be \emph{critical} if $\Delta_{k}\neq\mathscr{E}_n$ but $\mathscr{E}_n=\Delta_{k-1}(\mathscr{I})$. This is, the order $k$ of the Jacobians is the smallest for which the extension does not coincide with $\mathscr{E}_n$.
\end{definition}

Now, we change lower indices to upper indices as follows. 
\begin{definition}\label{def:up}
    $\Delta^k=\Delta_{n-k+1}$.
\end{definition}

By using the upper indices as in definition \ref{def:up}, we have that 

\eqn{
i_1=\cor(\mathscr{I}), \; i_2=\cor(\Delta^{i_1}\mathscr{I}), \; \ldots, \; i_k=\cor(\Delta^{i_{k-1}}\cdots \Delta^{i_k}\mathscr{I}).
}

\begin{definition}[\sc Thom-Boardman symbol] Let $I=(i_1,i_2,\ldots,i_k)$ be a non-increasing sequence of non-negative integer numbers. The ideal $\mathscr{I}$ is said to have \emph{Thom-Boardman symbol $I$} if its successive critical extensions are 
    
    \[\Delta^{i_1}\mathscr{I},\, \Delta^{i_2}\Delta^{i_1}\mathscr{I}, \ldots, \Delta^{i_k}\Delta^{i_{k-1}}\cdots\Delta^{i_2}\Delta^{i_1}\mathscr{I}.\]
    
\end{definition}

\begin{definition} [\sc Symbol of a Singularity]
    Let the map $f:N^n\to M^m$ be such that $f(0)=0$. We say that \emph{ $f$ has a singularity of Thom-Boardman symbol $\Sigma^I$ at $0$} if the ideal generated by the $m$ coordinate functions $f_i$ has Thom-Boardman symbol $I$.
\end{definition}

\begin{definition}[\sc Nice map] A map $f$ is said to be \emph{nice} if its $k$-jet extension is transverse to the manifolds $\Sigma^I$.
    
\end{definition}

The importance of a nice map is contained in the following result.

\begin{theorem}[\sc On nice maps]\cite{Boardman} $ $
    
    \begin{enumerate}
        \item If $f:N^n\to M^m$ is a nice map, then $\Sigma^I(f)=(j^kf)^{-1}(\Sigma^I)$. This is $\Sigma^I(f)$ is a submanifold of $N$ and $x\in\Sigma^I(f)$ if and only if $j^kf(x)\in\Sigma^I$.
        \item Any smooth map $f:N^n\to M^m$ can be arbitrarily well approximated by a nice map.  
    \end{enumerate} 
\end{theorem}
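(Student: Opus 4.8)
The plan is to treat the two assertions separately, since the second is a soft transversality statement while the first carries the geometric content. For part (2), the density of nice maps, I would invoke the jet transversality theorem (the same mechanism as Theorem \ref{teo:trans}, in its classical form for maps $N\to M$). The Thom--Boardman loci $\Sigma^I$ form a locally finite family of locally closed submanifolds of the jet space $J^k(N,M)$; for each of them the set of maps $f$ whose $k$-jet extension $j^kf$ is transverse to $\Sigma^I$ is residual in $\C^\infty(N,M)$ with the Whitney topology. A countable intersection of residual sets is again residual, hence dense, and a map lying in this intersection is by definition nice. Thus every smooth $f$ is approximated arbitrarily well by a nice map.

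For part (1) I would argue by induction on the length $k$ of the symbol $I=(i_1,\dots,i_k)$. The crux is first to know that each $\Sigma^I$ is genuinely a submanifold of the appropriate jet space, together with a geometric description of its defining conditions; this is Boardman's intrinsic-derivative construction. In the base case $\Sigma^{i_1}\subset J^1(N,M)$ is the corank locus, which is the pullback of a determinantal stratum in the space of $m\times n$ matrices, hence a submanifold of the predicted codimension. For the inductive step I would use the intrinsic second derivative: on the kernel subbundle over $\Sigma^I$ the next-order part of $f$ supplies a coordinate-free symmetric form with values in the cokernel, and the Jacobian-extension ideal $\Delta^{i_{k+1}}\cdots\Delta^{i_1}\mathscr I$ is precisely what measures the rank drop of this intrinsic derivative. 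Establishing that this algebraic condition is independent of coordinates (already flagged in the remark following the Jacobian-extension definition) and cuts out a submanifold $\Sigma^{I,i_{k+1}}$ of the correct codimension is the heart of the matter.

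Granting the jet-space submanifold structure, the equality $\Sigma^I(f)=(j^kf)^{-1}(\Sigma^I)$ follows from niceness. Because $j^kf$ is transverse to $\Sigma^I$, the preimage theorem makes $(j^kf)^{-1}(\Sigma^I)$ a submanifold of $N$ of codimension $\mathrm{codim}\,\Sigma^I$; and the inductive geometric definition $\Sigma^{I,i_{k+1}}(f)=\Sigma^{i_{k+1}}\!\big(f|\Sigma^I(f)\big)$ matches the jet-space condition because the corank of $D\big(f|\Sigma^I(f)\big)$ equals the corank recorded by the successive critical Jacobian extensions. This is exactly the translation between the geometric iterated-kernel definition and the algebraic one. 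Carrying the induction through the source and the jet descriptions simultaneously yields $x\in\Sigma^I(f)\iff j^kf(x)\in\Sigma^I$, which is the claimed statement.

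The main obstacle I expect is the inductive step of part (1): proving that the higher Thom--Boardman loci are smooth submanifolds of jet space and that the algebraic Jacobian-extension description coincides with the geometric one. This is Boardman's intrinsic-derivative machinery, and the delicate points are the coordinate-independence of the successive critical extensions and the constancy of their codimension along each stratum. By comparison the density statement of part (2) is routine once this jet-space structure is in place.
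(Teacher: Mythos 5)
This theorem is not proved in the paper at all: it is quoted verbatim from Boardman's 1967 paper as background material, so there is no internal proof to compare against. Judged on its own terms, your proposal is a correct roadmap of the standard argument, but it is an outline rather than a proof, and the gap it leaves open is precisely the entire mathematical content of the statement. For part (1) you correctly isolate the two things that must be established --- (i) that each $\Sigma^I$ is a smooth locally closed submanifold of the jet space of a well-defined codimension, and (ii) that the algebraic description via successive critical Jacobian extensions $\Delta^{i_{k+1}}\cdots\Delta^{i_1}\mathscr{I}$ coincides, for a map transverse to $\Sigma^I$, with the geometric iterated-kernel definition $\Sigma^{i_{k+1}}\bigl(f|\Sigma^I(f)\bigr)$ --- but you then declare both to be ``the heart of the matter'' and stop. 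Neither is routine: (i) requires the intrinsic-derivative construction and a computation of the codimension of the determinantal strata it produces, and (ii) requires showing that under the transversality hypothesis the kernel of $D\bigl(f|\Sigma^I(f)\bigr)$ at a point is computed by the next critical extension of the pulled-back ideal; without transversality the geometric side need not even be a manifold, so the two definitions cannot be compared pointwise in any naive way. Asserting that ``the corank of $D\bigl(f|\Sigma^I(f)\bigr)$ equals the corank recorded by the successive critical Jacobian extensions'' is a restatement of the theorem, not an argument for it.

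Part (2) of your sketch is essentially complete modulo part (1): once the $\Sigma^I$ are known to form a locally finite family of submanifolds of $J^k(N,M)$, the jet transversality theorem plus the Baire property of $\C^\infty(N,M)$ in the Whitney topology gives density of nice maps exactly as you say. But note the logical order: the density statement presupposes the submanifold structure whose proof you have deferred, so part (2) cannot be called ``routine once this jet-space structure is in place'' as a way of discharging the obligation --- the structure is the obligation. In the context of this paper the appropriate resolution is simply to cite Boardman for both halves, which is what the authors do; if you intend to supply an actual proof, the intrinsic-derivative construction and the coordinate-independence of the critical extensions must be carried out in full.
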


\section{Desingularization}\label{app:des}
     Note that we can write each elementary catastrophe in the form
    \eq{
    	V(x,a) = V(x,0) + \sum_{i=1}^m a_i \parcs{V(x,a)}{a_i},
    }
     where $x\in\R^n,\; a\in\R^m$, and with $n \leq m$. The constraint manifold (see definition \ref{def:CDE}) is given by $\parc{x}V(x,a)=0$, which means
    \eq{
    	\parcs{V(x,0)}{x_j}+ \sum_{i=1}^m a_i \parcs{^2V(x,a)}{x_j\partial a_i}=0, \qquad \forall j\in [1,n].
    }

    Next, note that we can always solve the previous equation for $n$ of the $a_j$'s, obtaining
    \eq{
    	a_{j}=-\parcs{V(x,0)}{x_j}-\sum_{i=j+1}^m a_i \parcs{^2V(x,a)}{x_j\partial a_i}.
    }	

    This expresses that $a_j$ is the coefficient of the linear term $x_j$ in the potential function $V(x,a)$. Now, we can choose coordinates in $S_V$ as 
    \eq{
    \left(x_1,\ldots,x_n,-\parcs{V(x,0)}{x_1}-\sum_{i=j+1}^m a_i \parcs{^2V(x,a)}{x_1\partial a_i},\ldots,-\parcs{V(x,0)}{x_n}-\sum_{i=j+1}^m a_i \parcs{^2V(x,a)}{x_n\partial a_i},a_{n+1},\ldots,a_m \right).
    }

    Next, we define the projection $\tilde \pi=\pi|\sv$, this is
    \eq{
    	\tilde\pi =\left(-\parcs{V(x,0)}{x_1}-\sum_{i=j+1}^m a_i \parcs{^2V(x,a)}{x_1\partial a_i},\ldots,-\parcs{V(x,0)}{x_n}-\sum_{i=j+1}^m a_i \parcs{^2V(x,a)}{x_n\partial a_i},a_{n+1},\ldots,a_m \right).
    }

    In the original coordinates, $X$ has the general form
    \eq{
    	X=\sum_{i=1}^m f_i(x,a)\parc a_i.
    }

    $S_V$ is the phase space of a constrained differential equation. So, for a point in $S_V$ with coordinates $(x_1,\ldots,x_n,a_{n+1},\ldots,a_m)$, $\tilde X$ is given by
    \eq{
    	\tilde X = (d\tilde\pi)^{-1}X(x,\tilde\pi(x,a)).
    }

    It is clear that $\tilde X$ is defined only for points where the projection in non-singular. Next, recall that the map $A\mapsto \det(A)A^{-1}$ can be extended to a $\C^\infty$ map on the space of square matrices. This means that we can define a smooth vector field by
    \eq{
    	\oX = \det(d\tilde\pi)(d\tilde\pi)^{-1}X(x,\tilde\pi(x,a)).
    }

    Note that for all points where $\det(d\tilde\pi)\neq 0$, the solutions of $(V,X)$ are obtained from the integral curves of $\oX$. First by reparametrization due to the smooth projection $\tilde\pi$, and in cases where $\det(d\tilde\pi)<0$, by then reversing the direction of the solutions.

\section{Center Manifold Reduction}\label{app:cm}
Let a $\C^\infty$ vector field $Y(x)$  be given as $Y=\sum_{i=1}^nf_i\parc{x_i}$. Assume that the origin is an isolated equilibrium point, this is $Y(0)=0$. Assume also that the Jacobian of $Y$ has $c$ eigenvalues in the imaginary axis, and let $\ell$ be a positive integer. We have the following result.

  \begin{theorem}[{\sc{Center manifold}}]\label{teo:cm} There exists a $\C^\ell$, $c-$dimensional manifold $W^{c}$ containing the origin, and a neighborhood $U$ of $0\in\R^n$, such that for any point $x\in W^{c}\cap U$, $Y(x)$ is tangent to $W^{c}$ at $x$. Moreover, there exists an integer $r$, with $0\leq r\leq n-c$ such that $Y$ is \emph{topologically equivalent} to the vector field
  \eq{
    \oY=\sum_{i=1}^{c}\tilde f_i(y_1,\ldots,y_{c})\parc{y_i}+\sum_{i=c+1}^{c+r}y_i\parc{y_i}-\sum_{i=c+r+1}^ny_i\parc{y_i},
  }
  where $(y_1,\cdots,y_{c})$ are coordinates in the center manifold $W^{c}$, and all eigenvalues of $D_0\tilde f$ are on the imaginary axis.
  \end{theorem}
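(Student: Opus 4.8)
The plan is to split the theorem into two largely independent pieces: the \emph{existence} of the center manifold $W^{c}$ with the stated tangency and smoothness, and the \emph{topological reduction} of $Y$ to the suspended normal form $\oY$. For the first piece I would begin by putting the linear part $D_0Y$ into block form, decomposing $\R^n=E^c\oplus E^s\oplus E^u$ into the generalized eigenspaces associated respectively to the purely imaginary spectrum, the spectrum with negative real part, and the spectrum with positive real part, of dimensions $c$, $s$ and $u$, so that $s+u=n-c$ and we will ultimately set $r=u$. Writing $Y$ in coordinates adapted to this splitting and cutting off the nonlinear terms outside a small ball (so that they become globally Lipschitz with arbitrarily small Lipschitz constant), the center manifold is produced as the graph of a map $E^c\to E^s\oplus E^u$ obtained as the unique fixed point of a Lyapunov--Perron integral operator, equivalently via a graph transform. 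A contraction argument on a suitable space of bounded Lipschitz graphs yields existence, the tangency $T_0W^c=E^c$, invariance, and $\C^\ell$ smoothness for any prescribed finite $\ell$. By invariance $Y$ is tangent to $W^c$, and restricting $Y$ to $W^c$ gives the reduced field $\tilde f$ whose linearization $D_0\tilde f$ carries exactly the imaginary-axis spectrum.

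For the second piece I would invoke the reduction principle of Shoshitaishvili. One constructs also the center-stable manifold $W^{cs}$ (tangent to $E^c\oplus E^s$) and the center-unstable manifold $W^{cu}$ (tangent to $E^c\oplus E^u$) by the same fixed-point machinery. The claim is that, in a neighborhood of the origin, $Y$ is topologically equivalent to the product of the reduced flow on $W^c$ with the standard linear saddle $\sum_{i=1}^{u} y_i\parc{y_i}-\sum_{i=1}^{s} y_i\parc{y_i}$ on the hyperbolic directions. To build the equivalence I would exploit the fact that $W^{cs}$ and $W^{cu}$ are fibered over $W^c$ by the strong-stable and strong-unstable foliations supplied by asymptotic phase, so that the local phase space is homeomorphic to $W^c\times E^s\times E^u$ with the flow splitting as the reduced flow times a hyperbolic contraction and expansion; applying the Hartman--Grobman theorem to linearize the hyperbolic directions then produces precisely $\oY$ with $r=u$.

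The main obstacle is the second piece: producing the homeomorphism that simultaneously respects the center dynamics and collapses the hyperbolic directions onto the linear saddle. The delicate points are the existence and continuity of the invariant strong-stable and strong-unstable foliations of $W^{cs}$ and $W^{cu}$ over $W^c$, which require a spectral-gap estimate separating the center rates from the hyperbolic rates, and the gluing of the separate stable-side and unstable-side homeomorphisms along $W^c$ into a single, globally defined local homeomorphism. The center-manifold existence argument, by contrast, is routine once the cutoff is in place, and the final identification of $\tilde f$ and of the integer $r$ is immediate from the construction.
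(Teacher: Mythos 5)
This theorem is not proved in the paper at all: it appears in Appendix \ref{app:cm} purely as quoted background, with the (non-)uniqueness remark referred to \cite{Arnold2, Takens2}, so there is no ``paper proof'' to compare against. Your outline is the standard route to this result in the literature --- Lyapunov--Perron (or graph transform) with a cutoff for the existence, tangency and finite-order smoothness of $W^{c}$, followed by Shoshitaishvili's reduction principle for the topological equivalence to the suspended product $\oY$ with $r=u$ --- and as a sketch it is essentially correct, including your identification of the genuinely delicate step (the invariant strong-stable/strong-unstable foliations of $W^{cs}$ and $W^{cu}$ over $W^{c}$ and the gluing of the two sided conjugacies along $W^{c}$). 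Two small points of care: first, the appeal to Hartman--Grobman is not quite the right tool as stated, since the origin is not hyperbolic; what one actually needs is a partial (fibered) linearization along the foliations, which is itself the content of the reduction principle rather than a consequence of the classical Hartman--Grobman theorem, so that step should be replaced by the explicit construction of the fiber homeomorphisms via asymptotic phase. Second, you correctly restrict to $\C^{\ell}$ smoothness for a prescribed finite $\ell$ (with the neighborhood, or the cutoff, depending on $\ell$); this matters because the manifold is in general not $\C^{\infty}$, and it is consistent with the statement as given. With those caveats your proposal matches what the cited sources actually do.
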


It is important to note that the center manifold $W^{c}$ in theorem \ref{teo:cm} is not unique. However, different choices of $W^c$ lead to topologically equivalent phase portraits \cite{Arnold2, Takens2}.

\section{Takens's Normal Form Theorem}\label{app:nf}
Assume $Y(x)$ is a vector field as above in item appendix \ref{app:cm}. The purpose of the following theorem is to write the vector field $Y$ in it's $k-$jet, and in some simple form. For this, define by $Y_1(x)$ the vector field which has the same $1-$jet at $x=0$ as $Y$, and whose coefficients are linear in $x$. Denote by $\mathcal H^k$ the space of vector fields whose coefficients are homogeneous polynomials of degree $k$.\\

  The linear map $[Y_1,-]_k:\mathcal H^k\to\mathcal H^k$ assigns to each $H\in\mathcal H^k$ the Lie product $[Y_1,H]$. Observe that for a fixed $Y_1$ there is a splitting $\mathcal H^k=B^k+G^k$,  where $B^k={\rm{Im}}\left([Y_1,-]_k\right)$, and $G^k$ is some complementary space.

  \begin{theorem}[{\sc {Normal form theorem \cite{Takens2}}}]\label{teo:nf} Let $Y,\; Y_1, \; B^k, \; G^k$ be as above. Then, for $\ell \leq k$, there exists a $\C^\infty-$diffeomorphism $\phi:\R^n\to\R^n$, which fixes the origin, such that $\phi_*(Y)=Y'$ is of the form
  \eq{
  Y'=Y_1+g_2+\cdots+g_\ell+R_\ell
  }    
  where $g_j\in G^j$, $j=2,\ldots,\ell$ and $R_\ell$ is a vector field with vanishing $\ell-jet$ at the origin, $\ell=k=\infty$ is not excluded.
  \end{theorem}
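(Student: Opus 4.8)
The plan is to prove this by induction on the polynomial degree, reducing the problem at each order to a linear algebra problem — the \emph{homological equation} — governed by the operator $[Y_1,-]_k$. First I would record the basic transformation law: if $H_j\in\mathcal H^j$ and $\phi$ denotes the time-one flow of $H_j$, then $\phi$ fixes the origin, its $(j-1)$-jet is the identity, and the pushforward satisfies
\eq{
\phi_*(Y)=Y+[Y,H_j]+(\text{terms of degree}>j).
}
Since $Y=Y_1+(\text{degree}\geq 2)$, the only contribution to the degree-$j$ part of $[Y,H_j]$ comes from the linear part $Y_1$, namely $[Y_1,H_j]\in\mathcal H^j$; all terms of degree $<j$ are left untouched by $\phi$. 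This structural fact is what makes an order-by-order induction possible.

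Then I would carry out the induction. Suppose that after finitely many such transformations $Y$ has been brought to the form $Y_1+g_2+\cdots+g_{j-1}+F_j+(\text{degree}>j)$, where $g_i\in G^i$ for $i<j$ and $F_j\in\mathcal H^j$ is arbitrary. Using the splitting $\mathcal H^j=B^j+ G^j$ with $B^j=\im([Y_1,-]_j)$, decompose $F_j=b_j+g_j$ with $b_j\in B^j$ and $g_j\in G^j$. By the very definition of $B^j$ there exists $H_j\in\mathcal H^j$ solving the homological equation $[Y_1,H_j]=-b_j$; applying the flow of this $H_j$ replaces $F_j$ by $g_j\in G^j$, leaves $Y_1,g_2,\ldots,g_{j-1}$ unchanged, and only alters terms of degree $>j$. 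Starting from $j=2$ and iterating up to $j=\ell$, then composing the resulting diffeomorphisms, produces a single $\C^\infty$ diffeomorphism $\phi$ fixing the origin with $\phi_*(Y)=Y_1+g_2+\cdots+g_\ell+R_\ell$, where by construction $R_\ell$ has vanishing $\ell$-jet. The finite-$\ell$ case is thus settled by a finite composition, so no convergence question arises.

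For the excluded limit $\ell=k=\infty$, the finite-order construction yields a \emph{formal} diffeomorphism — the inverse limit of the truncations $\phi^{(\ell)}$, whose Taylor coefficients stabilize at each fixed degree. I would realize this formal power series as the jet of a genuine $\C^\infty$ map by appealing to Borel's theorem, which guarantees a smooth map with any prescribed Taylor expansion at the origin; the resulting smooth $\phi$ then conjugates $Y$ to $Y_1+\sum_{j\geq 2}g_j$ up to a flat remainder.

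The main obstacle is the first step — pinning down precisely how a near-identity transformation acts degree by degree. Everything hinges on verifying that conjugation by the flow of $H_j\in\mathcal H^j$ modifies the degree-$j$ coefficient exactly by $[Y_1,H_j]$ and leaves all lower degrees fixed; in particular the sign convention and the claim that only $Y_1$, not the higher-order part of $Y$, contributes at degree $j$ must be checked with care. Once this transformation law is in hand, the rest of the argument is routine linear algebra within each $\mathcal H^j$, together with the surjectivity of $[Y_1,-]_j$ onto $B^j$ that is built into the choice of splitting $\mathcal H^k=B^k+G^k$.
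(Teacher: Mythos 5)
The paper states this theorem without proof, citing \cite{Takens2}, and your argument is precisely the classical one from that reference: induction on the degree, solving the homological equation $[Y_1,H_j]=-b_j$ within the splitting $\mathcal H^j=B^j+G^j$ (where only $Y_1$ contributes to the bracket at degree $j$ and lower degrees are untouched), with Borel's theorem handling the case $\ell=k=\infty$. Your proof is correct; the only minor point left implicit is that the time-one flow of a homogeneous polynomial field $H_j$ need not be complete on all of $\R^n$, so to obtain a diffeomorphism of $\R^n$ rather than of a germ one should cut $H_j$ off outside a neighborhood of the origin, which changes nothing at the level of jets.
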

  \begin{remark} In case the $1-jet$ of $Y$ is identically $0$, one proceeds as follows. Let $s$ be the smallest integer such that the $s-jet$ of $Y$ does not vanish at $0$, denote by $Y_s$ the vector field whose component functions are homogeneous polynomials of degree $s$, and such that the $s-$jets of $Y$ and $Y_s$ are the same. As in the normal form theorem, define the map
  \eq{
  \left[ Y_s,- \right]_k:\mathcal H^k\to \mathcal H^{k+s-1}.
  }

  For $k>s$, the splitting of the space $\mathcal H^k$ is $\mathcal H^k=B^k+G^k$, where $B^k=\im\left([Y_s,H] \right)$, with now $H\in \mathcal H^{k-s+1}$. In this way, the conclusion of the normal form theorem remains valid by replacing the $Y'$ from above by
  \eq{
  Y'=Y_s+g_{s+1}+\ldots+g_\ell+R_\ell.
  }

  \end{remark}
\section*{Acknowledgements}
The authors are grateful with Robert Roussarie, David Chillingworth, and the anonymous reviewers for helpful discussions and comments that improved the text. HJK is partially supported by a CONACyT graduate grant.
\section*{References}

\end{document}